\documentclass{article}
\usepackage{latexsym,amsfonts,amsthm,amsmath,amscd,amssymb}
\usepackage[dvips]{graphicx}

\setlength{\textheight}{23cm} \setlength{\textwidth}{16cm}
\setlength{\topmargin}{-1cm} \setlength{\oddsidemargin}{10mm}

\newtheorem{lemma}{Lemma}[section]
\newtheorem{thm}[lemma]{Theorem}
\newtheorem{rem}[lemma]{Remark}
\newtheorem{prop}[lemma]{Proposition}

\newtheorem{oss}[lemma]{Observation}
\newtheorem{example}[lemma]{Example}
\newtheorem{defn}[lemma]{Definition}

\newcommand\matZ{{\mathbb{Z}}}
\newcommand\matR{{\mathbb{R}}}
\newcommand\matN{{\mathbb{N}}}

\newcommand{\cl}{C \kern -0.1em \ell}

\renewcommand{\hbar}{{\overline{h}}}

\newfont{\Got}{eufm10 scaled 1200}

\newcommand\calD{{\mathcal D}}

\begin{document}

\title{Diffeological Clifford algebras and pseudo-bundles of Clifford modules}

\author{Ekaterina~{\textsc Pervova}}

\maketitle

\begin{abstract}
\noindent We consider the diffeological version of the Clifford
algebra of a (diffeological) finite-dimensional vector space; we
start by commenting on the notion of a diffeological algebra (which
is the expected analogue of the usual one) and that of a
diffeological module (also an expected counterpart of the usual
notion). After considering the natural diffeology of the Clifford
algebra, and considering which of its standard properties re-appear
in the diffeological context (most of them), we turn to our main
interest, which is constructing the pseudo-bundles of Clifford
algebras associated to a given (finite-dimensional) diffeological
vector pseudo-bundle, and those of the usual Clifford modules (the
exterior algebras). The substantial difference that emerges with
respect to the standard context, and paves the way to various
questions that do not have standard analogues, stems from the fact
that the notion of a diffeological pseudo-bundle is very different
from the usual bundle, and this under two main respects: it may have
fibres of different dimensions, and even if it does not, its total
and base spaces frequently are not smooth, or even topological,
manifolds.

\noindent MSC (2010): 53C15, 15A69 (primary), 57R35, 57R45
(secondary).
\end{abstract}

\section*{Introduction}

The concept of the diffeological space is due to J.M. Souriau
\cite{So1}, \cite{So2}. The most comprehensive (and high-quality)
source for diffeology is the excellent book \cite{iglesiasBook},
which provides also a fascinating account of how diffeology came
about as a field. What might particularly be curious, given the
context in which Clifford algebras usually appear, is the mentioning
in the Preface of how diffeology takes sometimes a view in some
sense complementary to that of the noncommutative geometry (a brief
introduction to the latter field can be found, for instance, in
\cite{varilly}), and how it can be of help to whoever is not quite
comfortable with heavy functional analysis or
$C^*$-algebras.\footnote{That would be myself, for instance.}

\paragraph{The content} This paper is dedicated to considering Clifford
algebras for (finite-dimensional) diffeological vector spaces and
then the pseudo-bundles of such, along with pseudo-bundles of
Clifford modules. The notion of a diffeological Clifford algebra
does not bring much novelty with respect to the standard setting; it
only needs to be noticed that, unless the algebra is built over a
standard space --- which is to say, $\matR^n$ with the diffeology
made up of all the usual smooth maps, --- any smooth symmetric
bilinear form is necessarily degenerate (see \cite{iglesiasBook}),
so there is not a unitary action on the corresponding exterior
algebra (we consider some questions of which action does exist
there).

On the other hand, interesting phenomena appear when we consider,
instead of a single vector space, a collection of such that fibers
over a diffeological space --- in other words, a diffeological
counterpart of a vector bundle. Such notion appeared originally in
\cite{iglFibre} (as a partial case of that of a \emph{diffeological
fibre bundle}; see also \cite{iglesiasBook}, Chapter 8); it was also
treated in \cite{vincent}, where it is called a \emph{regular vector
bundle}, and employed in \cite{CWtangent}, where the term is a
\emph{diffeological vector space over $X$}. We opt for calling it a
\emph{diffeological vector pseudo-bundle} (although it is precisely
the same thing), both because these objects, in general, are not
bundles in the usual sense and because this avoids confusion with
vector spaces \emph{proper}.

A rough shape of such an object is a smooth surjective map $\pi:V\to
X$, where $V$ and $X$ are diffeological spaces, and the pre-image of
any point of $x$ is a finite-dimensional diffeological vector space.
Given such $\pi:V\to X$, we endow it with a smooth symmetric
bilinear form $g:X\to V^*\otimes V^*$ (as we have already commented,
in general it cannot be a metric, meaning that it does not give a
scalar product on individual fibres; we recall that it may not
always have the maximal rank possible for a given fibre). We then
turn to the subject of our main, which is pseudo-bundles of the
corresponding Clifford algebras $Cl(V,g)=\cup_{x\in
X}Cl(V_x,g(x))\to X$ and those of (abstract) Clifford modules, where
we concentrate on their behavior under the so-called \emph{gluing}
operation. Notice that all our vector spaces (in particular, the
fibres $\pi^{-1}(x)\subseteq V$) are over real numbers.

\paragraph{The structure of the paper} In order to make the paper
self-contained, we collect in Section 1 all the main definitions and
facts that are used therein, that is, diffeological spaces,
diffeological vector spaces, Clifford algebras and modules, and
diffeological algebras. In Section 2, which has somewhat expository
nature (this material shall be at least implicitly known to anyone
familiar with the diffeology field), we consider some instances of
smooth actions of diffeological algebras on diffeological spaces. We
comment on diffeological version of Clifford algebras and modules in
Section 3. After recalling, in Section 4, the needed
facts/constructions regarding diffeological vector pseudo-bundles
and pseudo-metrics on them (this material is not new and comes from
previous sources), in Sections 5 and 6 we consider the gluing
operation for pseudo-bundles of Clifford algebras (Section 5) and
those of Clifford modules (Section 6); the main result thus obtained
is that, under appropriate compatibility conditions, the result of
gluing is again a pseudo-bundle of Clifford algebras/modules in a
natural way.

\paragraph{Acknowledgments} The creation of this work is due to the
influence of a non-mathematician colleague of mine, Prof. Riccardo
Zucchi, whose good-naturedness, and the ability to provide subtle
yet eloquent and inspiring comments and pointers, are beyond any
praise. I also have a long-standing debt of gratitude to Prof. Paolo
Piazza, to whom I owe my first true encounter with the Atiyah-Singer
index theory, and in particular the first real encounter with
Clifford algebras. Despite the referee reports on the next-to-last
version of this paper being $OH$-polarized (for those who remember
some chemistry), I am grateful to both of the authors of those
reports. Finally, I am much grateful to the referees of the reports
that came in after those, for many useful suggestions.

\section{Main definitions}

We collect here the standard definitions that are needed in what
follows, with the exception of diffeological pseudo-bundles that are
given a separate treatment in the section hereafter.

\subsection{Diffeological spaces and diffeologies}

We start by defining the main concepts regarding diffeological
spaces \emph{proper} that we need.

\paragraph{Diffeological spaces and smooth maps} A diffeological
space is just a set endowed with a \emph{diffeology}, which is a
sort of a substitute for the usual notion of a smooth structure; in
and of itself, a diffeology is collection of maps into this set
satisfying several properties. The precise definition is as follows.

\begin{defn} \emph{(\cite{So2})} A \textbf{diffeological space} is a pair
$(X,\calD_X)$ where $X$ is a set and $\calD_X$ is a specified
collection of maps $U\to X$ (called \textbf{plots}) for each open
set $U$ in $\matR^n$ and for each $n\in\matN$, such that for all
open subsets $U\subseteq\matR^n$ and $V\subseteq\matR^m$ the
following three conditions are satisfied:
\begin{enumerate}
  \item (The covering condition) Every constant map $U\to X$ is a
  plot;
  \item (The smooth compatibility condition) If $U\to X$ is a plot
  and $V\to U$ is a smooth map (in the usual sense) then the
  composition $V\to U\to X$ is also a plot;
  \item (The sheaf condition) If $U=\cup_iU_i$ is an open cover and
  $U\to X$ is a set map such that each restriction $U_i\to X$ is a
  plot then the entire map $U\to X$ is a plot as well.
\end{enumerate}
\end{defn}

Usually, instead of $(X,\calD_X)$ one writes simply $X$ to denote a
diffeological space. An easy example of a diffeological space is any
smooth manifold, with the diffeology consisting of all usual smooth
maps into that manifold.

Now, if we have two diffeological spaces, $X$ and $Y$, and a set map
$f:X\to Y$ between them, then $f$ is called \textbf{smooth} if for
every plot $p:U\to X$ of $X$ the composition $f\circ p$ is a plot of
$Y$. The set of all smooth maps $X\to Y$ is denoted by
$C^{\infty}(X,Y)$.

\paragraph{The underlying topology} If $X$ is a diffeological space,
there is a natural topology underlying its diffeological structure.
It is called the \textbf{D-topology} and can be characterized as
follows: a subset $X'\subset X$ is open in D-topology (and is said
to be \textbf{D-open}) if and only if for every plot
$p:\matR^m\supset U\to X$ of $X$ the pre-image $p^{-1}(X')$ is a
usual open set in $U\subset\matR^m$. By Theorem 3.7 of
\cite{CSW_Dtopology} it actually suffices to ensure that this
condition hold for the subset of plots defined on $\matR$ (or its
subintervals); the D-topology is defined by smooth curves only.

Typically (but not always), when considering some known topological
space as a diffeological space, one would aim for a choice of
diffeology on a known (topological) space such that the underlying
D-topology coincide with the existing topology. For that, it
suffices to ensure that all plots be continuous in the usual sense.
Notice that this applies only to specific situations (say, we wish
to consider $\matR$ as a diffeological space, while preserving its
topology as a line); the set underlying the diffeological space is
not required to carry any other structure.

\paragraph{Comparing diffeologies} Given a set $X$, the set of all
possible diffeologies on $X$ is partially ordered by inclusion (with
respect to which it forms a complete lattice). More precisely, a
diffeology $\calD$ on $X$ is said to be \textbf{finer} than another
diffeology $\calD'$ if $\calD\subset\calD'$ (whereas $\calD'$ is
said to be \textbf{coarser} than $\calD$). Among all diffeologies,
there is the finest one, which turns out to be the natural
\textbf{discrete diffeology} and which consists of all locally
constant maps $U\to X$; and there is also the coarsest one, which
consists of \emph{all} possible maps $U\to X$, for all
$U\subseteq\matR^n$ and for all $n\in\matN$. It is called \emph{the}
\textbf{coarse diffeology} (or \textbf{indiscrete diffeology} by
some authors).

\paragraph{Pushforwards and pullbacks} For any diffeological space
$X$, any set $X'$, and any map $f:X\to X'$ there exists a finest
diffeology on $X'$ that makes the map $f$ smooth. It is this
diffeology that is called the \textbf{pushforward of the diffeology
of $X$ by the map $f$}; the explicit description of this diffeology
is as follows: $p':U\to X'$ is a plot if and only if for any $u_0\in
U$ there exists $u_0\in U'\subseteq U$ such that $p'|_{U'}=f\circ p$
for some plot $p$ of $X$. If, \emph{vice versa}, we have a map
$f:X'\to X$ then $X'$ can be endowed with the \textbf{pullback
diffeology}, namely, the coarsest diffeology on $X$ such that $f$ is
smooth; locally, its plots are precisely the maps $p:U\to X$ such
that $f\circ p$ is a plot of $X'$.

\paragraph{The subset diffeology} Each subset $Y\subset X$ of a
diffeological space $X$ carries a natural diffeology called
\textbf{subset diffeology} (and so is always a diffeological space,
in contrast with smooth manifolds, where most subsets are certainly
not smooth (sub)manifolds). The subset diffeology is the coarsest
diffeology such that the obvious inclusion map $Y\hookrightarrow X$
is smooth. From the practical point of view, it can be described as
the set of all those plots of $X$ whose range is contained in $Y$.

\paragraph{The quotient diffeology} Again, unlike smooth manifolds, the
diffeological spaces always have other diffeological spaces as their
quotients; these quotients are endowed a canonical diffeology,
called the \textbf{quotient diffeology}.  Specifically, if $X$ is a
diffeological space and $\sim$ is an equivalence relation on $X$,
then the quotient space $X/\sim$ is endowed with the diffeology that
is the pushforward of the diffeology of $X$ by the natural
projection $\pi:X\to X/\sim$. As is the case for all pushforward
diffeologies, locally any plot $q$ of $X/\sim$ has form $q=\pi\circ
p$ for some plot $p$ of $X$.

\paragraph{Disjoint unions and direct products} Let $\{X_i\}_{i\in I}$ be a
collection of diffeological spaces, where $I$ is a set of indices.
The \textbf{sum} of $\{X_i\}_{i\in I}$ is defined as
$$X=\coprod_{i\in I}X_i=\{(i,x)\,|\,i\in I\mbox{ and }x\in X_i\}.$$
The \textbf{sum diffeology}, or \textbf{disjoint union
diffeology},\footnote{We will mostly use the latter term, to avoid
confusion with the vector space direct sum diffeology (see below),
which is a product diffeology defined in the next sentence.} on $X$
is the \emph{finest} diffeology such that each natural injection
$X_i\to\coprod_{i\in I}X_i$ is smooth; one property of the disjoint
union diffeology is that any plot of it defined on a connected
domain, is just a plot of one the components. The \textbf{product
diffeology} $\calD$ on the product $\prod_{i\in I}X_i$ is the
\emph{coarsest} diffeology such that for each index $i\in I$ the
natural projection $\pi_i:\prod_{i\in I}X_i\to X_i$ is smooth.
Locally every plot of the product diffeology is a collection of
plots of the factors (for a finite product $X_1\times\ldots\times
X_n$ the local shape of a plot is $(p_1,\ldots,p_n)$, where $p_i$ is
a plot of $X_i$).

\paragraph{Functional diffeology} Let $X$, $Y$ be two diffeological
spaces, and let $C^{\infty}(X,Y)$ be the set of smooth maps from $X$
to $Y$. Let \textsc{ev} be the \emph{evaluation map}, defined by
$$\mbox{\textsc{ev}}:C^{\infty}(X,Y)\times X\to Y\mbox{ and }\mbox{\textsc{ev}}(f,x)=f(x).$$
The \textbf{functional diffeology} on $C^{\infty}(X,Y)$ is the
coarsest diffeology on it such that the evaluation map is smooth
(although any diffeology such that $\mbox{\textsc{ev}}$ is smooth
may be called a functional diffeology).

\subsection{Diffeological vector spaces}

The next notion which we will (obviously) make use of is that of a
diffeological vector space.

\paragraph{The definition of a diffeological vector space} Let $V$ be a
vector space over $\matR$ (this is the only case that we consider
here). A \textbf{vector space diffeology} on $V$ is any diffeology
of $V$ such that the addition and the scalar multiplication are
smooth, that is,
$$[(u,v)\mapsto u+v]\in C^{\infty}(V\times V,V)\mbox{ and }[(\lambda,v)\mapsto\lambda v]\in C^{\infty}(\matR\times V,V),$$
where $V\times V$ and $\matR\times V$ are equipped with the product
diffeology. A \textbf{diffeological vector space} is any vector
space $V$ equipped with a vector space diffeology; all of our vector
spaces will be finite-dimensional.

\paragraph{Subspaces and quotients} Every vector space subspace $W$
of a diffeological vector space is itself a diffeological vector
space for the subset diffeology; this is the diffeology with which
it is canonically endowed. Likewise, any usual quotient space $V/W$
is a diffeological vector space for the quotient diffeology; this,
again, is a canonical choice of a diffeology for it.

\paragraph{Linear maps} Even in the finite-dimensional case (or maybe
particularly so), in diffeology one needs to distinguish between
linear maps and smooth linear maps. This is because, unless the
diffeological vector space in question is a standard space,
\emph{i.e.}, $\matR^n$ with the standard diffeology, there will be
at least one non-smooth linear map from it to some other
diffeological vector space. We will see the implications of that
when we come to the definition of the diffeological dual.

\paragraph{Direct sum of diffeological vector spaces} Let
$V_1,\ldots,V_n$ be diffeological vector spaces. Consider the usual
direct sum $V=V_1\oplus\ldots\oplus V_n$ of this family; the space
$V$, equipped with the \emph{product} diffeology, is a diffeological
vector space. The diffeology on the direct sum of an infinite family
of vector spaces can be described as the coarsest diffeology such
that such that the natural projection on each factor is a smooth
linear map. Notice that the subset diffeology on each direct sum of
a finite sub-family of factors is the same as described above.

\paragraph{Euclidean structures and pseudo-metrics} The
notion of a \textbf{Euclidean diffeological vector space} does not
differ much from the usual notion of the Euclidean vector space. A
diffeological space $V$ is Euclidean if it is endowed with a scalar
product that is smooth with respect to the diffeology of $V$ and the
standard diffeology of $\matR$; that is, if there is a fixed map
$\langle , \rangle:V\times V\to\matR$ that has the usual properties
of bilinearity, symmetricity, and definite-positiveness and that is
smooth with respect to the diffeological product structure on
$V\times V$ and the standard diffeology on $\matR$. However, it is
known (see, for instance, \cite{iglesiasBook}) that a
finite-dimensional diffeological vector space admits a smooth
Euclidean structure if and only if it is diffeomorphic to the
standard $\matR^n$; in general, the maximal possible rank for a
smooth symmetric bilinear form is equal to the dimension of its
diffeological dual (see below). A form that achieves this rank
always exists and is called a \textbf{pseudo-metric}.

\paragraph{Fine diffeology on vector spaces} The \textbf{fine
diffeology} on a vector space $V$ is the \emph{finest} vector space
diffeology on it; endowed with such, $V$ is called a \emph{fine
vector space}. In the finite-dimensional case (which is the only one
we are treating here) any fine space is diffeomorphic to some
$\matR^n$, for appropriate $n$.\footnote{This is easy to see
directly; assume that a given finite-dimensional fine space $V$ is
already identified, as a vector space, with an appropriate
$\matR^n$. It suffices to see that the standard diffeology is the
finest one on $\matR^n$. Indeed, any diffeology contains all
constant maps, so for any domain $U\subset\matR^m$ (for whatever
$m$) any diffeology contains the maps $U\ni u\mapsto e_i$, where
$\{e_i\}_{i=1}^n$ is the canonical basis. Furthermore, any vector
space diffeology contains all finite linear combinations with smooth
functional coefficients of any collection of its plots. It follows
that any usual smooth map $f:U\to\matR^n$ is a plot for any vector
space diffeology on $\matR^n$, since we can write it as
$u\mapsto\sum_{i=1}^nf_i(u)e_i$. Thus, the standard diffeology is
indeed the finest vector space diffeology.}

\paragraph{The dual of a diffeological vector space} The diffeological
dual $V^*$ of a diffeological vector space $V$ (see \cite{vincent},
\cite{wu}) the set of smooth linear maps $V\to\matR$ endowed with
the functional diffeology, with respect to which it, itself, becomes
a diffeological vector space. Note that if $V$ has finite dimension
then this diffeology is standard, in the sense that $V^*$ is
diffeomorphic to some $\matR^k$ with the standard diffeology, for
appropriate $k$ (\cite{pseudometric}). This implies immediately (but
is also easy to show directly) that unless a finite-dimensional $V$
is standard itself, its diffeological dual has strictly smaller
dimension.

It should also be observed that there are many diffeological vector
spaces, with not-so-large a diffeology, that have a trivial dual. It
suffices to choose $n$ not-everywhere-smooth functions
$f_i:\matR\to\matR$ and endow $\matR^n$ with the vector space
diffeology generated by the $n$ plots $\matR\ni x\mapsto f_i(x)e_i$.
Any smooth linear $\matR$-valued map on this space must necessarily
be trivial, yet the diffeology in question can be described rather
concretely (it is essentially the extension of the ring of the usual
smooth maps to $\matR^n$ by the $n$ plots specified above).

\paragraph{The tensor product} Let $V_1,\ldots,V_n$ be finite-dimensional
diffeological vector spaces. Their usual tensor product is
canonically endowed with the \textbf{tensor product diffeology} (see
\cite{vincent} and \cite{wu}), which is the pushforward of the
product diffeology on the free product of vector spaces
$V_1\times\ldots\times V_n$ by the universal map
$V_1\times\ldots\times V_n\to V_1\otimes\ldots\otimes V_n$ (this is
slightly different from the definitions in the above sources, but
equivalent in the finite-dimensional case). The diffeological tensor
product thus defined possesses the usual universal property
(\cite{vincent}, Theorem 2.3.5): for any diffeological vector spaces
$V_1,\ldots,V_n,W$ there is a diffeomorphism between the space
$L^{\infty}(V_1\otimes\ldots\otimes V_n,W)$ of all smooth linear
maps $V_1\otimes\ldots\otimes V_n\to W$ (endowed with the functional
diffeolofy) and the space
$\mbox{Mult}^{\infty}(V_1\times\ldots\times V_n,W)$ of all smooth
multilinear maps $V_1\times\ldots\times V_n\to W$, also endowed with
the functional diffeology.

\subsection{Clifford algebras and Clifford modules}

In this section we recall briefly the standard notions of the
Clifford algebra (see, for instance, \cite{roe}) for a real vector
space equipped with a symmetric bilinear form, and then that of the
Clifford module.

\paragraph{Clifford algebras} There are a couple of ways to define a
Clifford algebra, one more abstract and one more constructive; we
recall both.

\begin{defn}
Let $V$ be a vector space equipped with a symmetric bilinear form
$q(\, ,\,)$. A \textbf{Clifford algebra} for $V$ is a unital algebra
$\cl(V,q)$ which is equipped with a map $\varphi:V\to \cl(V,q)$ such
that $\varphi(v)^2=-2q(v,v)1$, and which is universal among algebras
equipped with such maps.
\end{defn}

What the just-mentioned universality condition means precisely is
the following: if $\varphi':V\to A$ is another map from $V$ to an
algebra which satisfies $\varphi'(v)^2=-2q(v,v)1$ then there is a
unique algebra homomorphism $t:\cl(V,q)\to A$ such that
$\varphi'=t\circ\varphi$.

An easy example is the exterior algebra of a given vector space,
which corresponds to the bilinear form being identically zero.

The Clifford algebra can be equivalently defined in the following
way.

\begin{defn}
Let $V$ be a vector space equipped with a symmetric bilinear form
$q(\, ,\,)$. The \textbf{Clifford algebra} $\cl(V,q)$ associated to
$V$ and $q$ is the quotient of $T(V)/I(V)$ of the tensor algebra
$T(V)=\sum_rV^{\otimes r}$ by the ideal $I(V)\subset T(V)$ generated
by all the elements of the form $v\otimes w+w\otimes v+4q(v,w)$,
where $v,w\in V$.
\end{defn}

The universal map is then the natural projection $T(V)\to \cl(V,q)$.

\paragraph{Clifford modules} Let $V$ and $q$ be as above.

\begin{defn}
A \textbf{Clifford module} is a vector space $E$ endowed with an
action of the algebra $\cl(V,q)$, that is, a unital algebra
homomorphism $c:\cl(V,q)\to\mbox{End}(E)$.
\end{defn}

If the space $E$ is Euclidean, \emph{i.e.}, endowed with a scalar
product, one can speak of a \textbf{unitary action}, namely one for
which $c(v)$ is an orthogonal transformation for each $v\in V$.
Recall, as a main example, that the exterior algebra $\bigwedge^*V$
is a unitary Clifford module over $\cl(V,q)$.

\paragraph{Grading and filtration on $\cl(V,q)$} As is known, every
Clifford algebra $\cl(V,q)$ carries the following $\matZ_2$ grading:
$$\cl(V,q)=\cl(V,q)^0\oplus \cl(V,q)^1,$$
where $\cl(V,q)^0$ is the subspace generated by the products of an
even number of elements of $V$, while $\cl(V,q)^1$ is the subspace
generated by the products of an odd number of elements of $V$; this
is well-defined because $I(V)$ is generated by elements of even
degree in $T(V)$.

Besides, $\cl(V,q)$ inherits from $T(V)$ its filtration
$T(V)=\sum_k(\sum_{r=0}^kV^{\otimes r})$, via the natural
projection. Therefore
$$\cl(V,q)=\sum_k \cl^k(V,q),\mbox{ where }
\cl^k(V,q)=\{v\in \cl(V,q)\,|\,\exists u\in\sum_{r=0}^kV^{\otimes
r}\mbox{ such that }[u]=v\}.$$ The use of natural projections allows
also to define a surjective algebra homomorphism $$V^{\otimes k}\to
\cl^k(V,q)/\cl^{k-1}(V,q);$$ looking at its kernel, one sees that
$$\cl^k(V,q)/\cl^{k-1}(V,q)\cong\Lambda^kV.$$
This implies that the graded algebra associated to the
just-described filtration on $\cl(V,q)$, namely, the algebra
$\oplus_k \cl^k(V,q)/\cl^{k-1}(V,q)$ is isomorphic to the external
algebra $\bigwedge^* V$ (in particular, $\dim(\cl(V,q))=2^{\dim
V}$).

\subsection{Diffeological algebras and diffeological modules}

The concepts of diffeological algebra and diffeological module are
essentially obtained by adding the requirement of smoothness
wherever it is (obviously) needed; this is at least implicit in the
already existing works (see, for instance, \cite{wu}). We briefly
recall them, occasionally making some of the reasoning explicit.
Notice that an \emph{algebra} for us is always an associative unital
$\matR$-algebra.

\paragraph{Diffeological algebras} The definition of a diffeological
algebra is of course an expected one: all operations are required to
be smooth. We now state this formally.

\begin{defn}
Let $A$ be an algebra, and let $\calD$ be a diffeology on the
underlying set of $A$. The diffeology $\calD$ is called a
\textbf{diffeological algebra structure} on $A$, and $A$ is called a
\textbf{diffeological algebra}, if the pair $(A,\calD)$ is a
diffeological vector space with respect to the vector space
structure of $A$ and, furthermore, the product map $A\times A\to A$
is smooth with respect to the product diffeology on $A\times A$.
\end{defn}

Generally speaking, a vector space admits many diffeologies that
make it into a diffeological vector space (\emph{i.e.}, with respect
to which the addition and the multiplication by scalar are smooth);
it is the finest of these that is called \emph{the} diffeological
vector space structure. The same is \emph{a priori} true for
diffeological algebras, which leads us to the following definition:

\begin{defn}
Let $A$ be an algebra. The \textbf{diffeological algebra structure}
on $A$ is the finest diffeology with respect to which $A$ is a
diffeological vector space and the product in $A$ is smooth.
\end{defn}

Note that \emph{a priori} the diffeological algebra structure is
finer than just a diffeological vector space structure, even the
finest of the latter.

\paragraph{Subalgebras of a diffeological algebra} Let $(A,\calD)$
be a diffeological algebra, and let $B\subset A$ be a subalgebra of
$A$. As any subset of any diffeological space, $B$ carries a
sub-diffeology coming from $\calD$.

\begin{lemma}
The subalgebra $B$ endowed with the subset diffeology is a
diffeological algebra.
\end{lemma}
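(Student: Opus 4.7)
The plan is to verify the two defining conditions of a diffeological algebra for $B$ with its subset diffeology: that $B$ is a diffeological vector space, and that multiplication $B\times B\to B$ is smooth for the product diffeology on $B\times B$. The key tool is the universal-style characterization of the subset diffeology, namely that a map $p:U\to B$ is a plot of $B$ if and only if its composition with the inclusion $\iota:B\hookrightarrow A$ is a plot of $A$ (with image landing in $B$).

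First I would recall, or verify in a sentence, the compatibility lemma that the subset diffeology on $B\times B\subset A\times A$ coincides with the product of the subset diffeologies on the two factors. This follows directly by unwinding the definitions: a plot into $B\times B$ in the product diffeology is a pair $(p_1,p_2)$ of plots of $B$, which by definition of the subset diffeology are plots $U\to A$ with images in $B$; and such a pair is precisely a plot $U\to A\times A$ whose image lands in $B\times B$, i.e., a plot of $A\times A$ with values in $B\times B$.

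Next I would check the three operations. Let $p=(p_1,p_2):U\to B\times B$ be a plot for the product diffeology. By the observation above, $\iota\circ p_i$ are plots of $A$, so $(\iota\circ p_1,\iota\circ p_2)$ is a plot of $A\times A$. Since $A$ is a diffeological algebra, the maps $u\mapsto \iota(p_1(u))+\iota(p_2(u))$ and $u\mapsto \iota(p_1(u))\cdot \iota(p_2(u))$ are plots of $A$; because $B$ is a vector subspace and a subalgebra, these plots have images inside $B$, so they are plots of $B$ for the subset diffeology. A fully analogous (simpler) argument handles scalar multiplication, where we use a plot $(\lambda,p):U\to\matR\times B$ and the smoothness of the scalar multiplication of $A$.

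There is essentially no hard step here; the only thing to be a little careful about is the identification of the product of subset diffeologies with the subset diffeology on the product, which would be the main potential pitfall if one tried to verify smoothness directly from the sheaf/covering axioms rather than using the universal property of the subset diffeology given in the preceding subsection. Once that is in place, the conclusion is immediate from the corresponding smoothness properties of the operations in $A$ together with the closure of $B$ under these operations.
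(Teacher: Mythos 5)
Your proof is correct and follows essentially the same route as the paper: the paper likewise cites the fact that a vector subspace with the subset diffeology is a diffeological vector space, and then verifies smoothness of the product by composing a pair of plots of $B$ with the multiplication of $A$ and noting the result is a plot of $A$ with values in the subalgebra $B$, hence a plot of the subset diffeology. Your explicit remark identifying the product of subset diffeologies with the subset diffeology on $B\times B$ is a detail the paper leaves implicit, but it does not change the argument.
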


\begin{proof}
Observe first of all that $B$, being a vector subspace of the
diffeological vector space $A$, is a diffeological vector space for
the subset diffeology (see \cite{iglesiasBook}, Section 3.5).
Furthermore, if $p_1,p_2:U\to B$ are two plots of $B$ for the subset
diffeology, that are defined on the same domain, then the map $p$
defined on $U$ and acting by $p(x)=p_1(x)p_2(x)$ is a plot of $A$,
the latter being a diffeological algebra, and takes values in $B$,
the set $B$ being a subalgebra of $A$. Therefore $p$ is a plot for
the subset diffeology on $B$, which means that the product in $B$ is
smooth for the subset diffeology, whence the conclusion.
\end{proof}

\paragraph{Ideals of a diffeological algebra} We now look at the
ideals of a diffeological algebra $A$. Observe first of all that any
ideal $I$, being a vector subspace of $A$, is a diffeological vector
space with respect to the subset diffeology (as is the case for any
vector subspace of any diffeological vector space, see
\cite{iglesiasBook}, Section 3.5). The quotient $A/I$ is a
diffeological vector space for the quotient diffeology (once again,
as is the case for any diffeological vector space and any subspace
of it, see \cite{iglesiasBook}, Section 3.6). Finally, $A/I$ is an
algebra; the following easy lemma shows that it is a diffeological
one.

\begin{lemma}
Let $A$ be a diffeological algebra, and let $I$ be an ideal of $A$.
Then $A/I$ is a diffeological algebra for the quotient diffeology.
\end{lemma}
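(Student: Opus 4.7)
The plan is to verify the one remaining condition in the definition of a diffeological algebra, namely that the multiplication $m: A/I \times A/I \to A/I$, defined by $m([a],[b]) = [ab]$, is smooth with respect to the product diffeology on $A/I \times A/I$. The diffeological vector space structure on $A/I$ is already in place by the remark preceding the lemma, so only smoothness of the product requires attention.

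Let $\pi: A \to A/I$ be the natural projection. To check smoothness of $m$, I would take an arbitrary plot $q: U \to A/I \times A/I$ for the product diffeology, write it as $q = (q_1, q_2)$ with $q_i: U \to A/I$ plots of the quotient diffeology, and show that $m \circ q$ is a plot of $A/I$. Because the quotient diffeology is a pushforward, each $q_i$ is only \emph{locally} of the form $\pi \circ p_i$ for some plot $p_i$ of $A$; concretely, around any $u_0 \in U$ there is an open neighborhood $U_0 \subseteq U$ and plots $p_1, p_2: U_0 \to A$ with $q_i|_{U_0} = \pi \circ p_i$. Since $A$ is a diffeological algebra, the pointwise product $p_1 \cdot p_2: U_0 \to A$ is a plot of $A$; composing with $\pi$ gives that $(m \circ q)|_{U_0} = \pi \circ (p_1 \cdot p_2)$ is a plot of $A/I$. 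The sheaf condition for the quotient diffeology then promotes this local statement to the global conclusion that $m \circ q$ is a plot, hence $m$ is smooth.

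It is worth observing that the compatibility $m([a],[b]) = [ab]$ is well-defined precisely because $I$ is a (two-sided) ideal: this is what guarantees that the two local lifts $p_1, p_2$ can be multiplied in $A$ and projected, independently of the chosen representatives. I expect this to be the only genuinely content-carrying ingredient; the rest is a formal unwinding of the definition of the pushforward (quotient) diffeology plus the already-established fact that $A/I$ is a diffeological vector space. The main (very mild) obstacle is remembering that plots of a quotient only lift locally, so one has to invoke the sheaf condition to conclude rather than produce a single global lift.
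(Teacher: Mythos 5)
Your proposal is correct and follows the same route as the paper: the paper's (one-sentence) proof likewise argues that the quotient diffeology is the pushforward by the natural projection, so multiplication in $A/I$ is locally the projection composed with the (smooth) multiplication in $A$. Your version merely spells out the local lifting of plots and the appeal to the sheaf condition, which the paper leaves implicit.
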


\begin{proof}
This follows from the definition of the quotient diffeology as the
pushforward of the diffeology of $A$ by the natural projection; each
operation in $A/I$ is the composition of the corresponding operation
in $A$ with the natural projection, and therefore is smooth as
(essentially) the composition of two smooth maps.
\end{proof}

\paragraph{Diffeological module} Let $A$ be a diffeological algebra,
and let $E$ be a diffeological vector space.

\begin{defn}
We say that $E$ is a \textbf{diffeological module over $A$} if there
is a fixed homomorphism
$$c:A\to L^{\infty}(E,E)$$
that is smooth for the diffeology on $A$ and the functional
diffeology on $L^{\infty}(E,E)$.
\end{defn}

Note that, as has been observed in \cite{multilinear} and, I think,
is implicit in \cite{wu}, in general there is \emph{not} a classical
isomorphism $L^{\infty}(E,E)\cong E^*\otimes E$ (where $E^*$ is the
diffeological dual of $E$, see \cite{vincent}, \cite{wu}, and
Section 2 above). Furthermore, while in the usual
(finite-dimensional) vector space context $c$ can be seen as an
element of $L(A,L(E,E))\cong A^*\otimes L(E,E)\cong A^*\otimes
E^*\otimes E$, this is not necessarily so in the diffeological
context (on the other hand, as shown in \cite{multilinear}, the
usual construction does yield a smooth map $A^*\otimes
L^{\infty}(E,E)\to L^{\infty}((A^*)^*,L^{\infty}(E,E))$, but it may
not be an isomorphism, and $(A^*)^*$ may not be diffeomorphic to
$A$).

\subsection{The exterior algebra of a diffeological vector space}

In this section we comment, not so much on the fact \emph{that} the
usual symmetrization and antisymmetrization operators (and as a
consequence, the inner product and the exterior product) are smooth
(this is expected), but on \emph{why} they are so.

\paragraph{Symmetrization and antisymmetrization operators} Let $V$
be a diffeological vector space; let $\mathcal{S}_n(V)$ stand for
the space of all symmetric $n$-tensors on $V$, and let
$\mathcal{A}_n(V)$ be the space of all antisymmetric $n$-tensors on
it. We use the usual normalized version of the the symmetrization
operator:
$$\mbox{Sym}:\underbrace{V\otimes\ldots\otimes V}_n\ni v_1\otimes\ldots\otimes v_n
\mapsto\frac{1}{n!}\sum_{\sigma\in S_n}
v_{\sigma(1)}\otimes\ldots\otimes v_{\sigma(n)}\in
\underbrace{V\otimes\ldots\otimes V}_n,$$ with $S_n$ standing for
the group of permutations on $n$ elements and the operator being
then extended by linearity.

This operator is smooth as a map $V_1\otimes\ldots\otimes V_n\to
V_1\otimes\ldots\otimes V_n$, by the properties of a vector space
diffeology and because its composition with the projection onto the
component of the sum corresponding to any fixed $\sigma\in S_n$ is
smooth. The smoothness of the latter follows easily from the
definition of the tensor product diffeology as the pushforward of
the relevant product diffeology (which is commutative in a
tautological manner --- in fact, its definition does not take the
order of factors into account) and by writing each permutation as a
composition of transpositions.

\begin{lemma}
The subset diffeology and the pushforward diffeology by $\mbox{Sym}$
on the space $\mathcal{S}_n(V)$ of all symmetric $n$-tensors on $V$
coincide.
\end{lemma}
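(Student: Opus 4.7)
The plan is to prove the two diffeologies coincide on $\mathcal{S}_n(V)$ by establishing both inclusions separately. The key observation that drives everything is that $\mbox{Sym}:V^{\otimes n}\to V^{\otimes n}$ is a smooth idempotent whose image equals $\mathcal{S}_n(V)$ and whose restriction to $\mathcal{S}_n(V)$ is the identity map (the first because averaging over all permutations a second time reproduces the same symmetric tensor; the second because a symmetric tensor is fixed by each permutation, hence by the average). The smoothness of Sym is already established in the paragraph preceding the statement, so that ingredient is in hand.

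For the inclusion pushforward $\subseteq$ subset, I would take a plot $q$ of the pushforward diffeology; by the local description of pushforwards, around each point of its domain $q$ has the form $\mbox{Sym}\circ p$ for some plot $p:U'\to V^{\otimes n}$. Since Sym is smooth, the composition $\mbox{Sym}\circ p$ is itself a plot of $V^{\otimes n}$, and its range lies in $\mathcal{S}_n(V)$ by the very definition of Sym. This is precisely the criterion for being a plot of the subset diffeology on $\mathcal{S}_n(V)$.

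For the reverse inclusion, let $q:U\to\mathcal{S}_n(V)$ be a plot for the subset diffeology, which is to say a plot of $V^{\otimes n}$ whose values happen to lie in $\mathcal{S}_n(V)$. Because Sym acts as the identity on $\mathcal{S}_n(V)$, one has the tautological identity $q=\mbox{Sym}\circ q$, valid globally on $U$. Setting $p:=q$, this exhibits $q$ (even globally, not just locally) as $\mbox{Sym}\circ p$ with $p$ a plot of $V^{\otimes n}$, so $q$ is a plot for the pushforward diffeology.

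There is no real obstacle in the argument: once one notes that Sym is a smooth projection onto its image, the two inclusions are each a single line. The only substantive input, beyond formal properties of subset and pushforward diffeologies, is the smoothness of Sym itself, which was already verified. The same schema will presumably apply verbatim to antisymmetrization and $\mathcal{A}_n(V)$.
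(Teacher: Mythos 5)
Your proposal is correct and follows essentially the same route as the paper: both inclusions rest on the smoothness of $\mbox{Sym}$ into the tensor product and on the fact that $\mbox{Sym}$ restricts to the identity on $\mathcal{S}_n(V)$, the latter giving the factorization $q=\mbox{Sym}\circ q$ exactly as in the paper's argument. The only cosmetic difference is that the paper phrases the inclusion of the pushforward into the subset diffeology via ``the pushforward is the finest diffeology making $\mbox{Sym}$ smooth,'' while you unwind the plots directly; the content is the same.
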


\begin{proof}
The pushforward diffeology on $\mathcal{S}_n(V)$ is by definition
the finest such that $\mbox{Sym}$ is smooth. On the other hand, this
operator is smooth for the subset diffeology, being smooth as an
operator into $V_1\otimes\ldots\otimes V_n$, and by the very
definition of the subset diffeology. Thus, the pushforward
diffeology is \emph{a priori} finer than the subset diffeology. The
fact that they are actually the same comes from the fact that the
restriction of $\mbox{Sym}$ to $\mathcal{S}_n(V)$ is the identity.
So if $p:U\to\mathcal{S}_n(V)$ is a plot for the subset diffeology,
and $i:\mathcal{S}_n(V)\hookrightarrow V_1\otimes\ldots\otimes V_n$
is the natural inclusion, we have the equality
$p=\mbox{Sym}\circ(i\circ p)$; since $i\circ p$ is a plot of
$V_1\otimes\ldots\otimes V_n$, we get that $p$ is a plot for the
pushforward diffeology on $\mathcal{S}_n(V)$, and the statement is
proven.
\end{proof}

All the same reasoning holds for the usual antisymmetrization
operator,
$$\mbox{Alt}:\underbrace{V\otimes\ldots\otimes V}_n\ni v_1\otimes\ldots\otimes v_n
\mapsto\frac{1}{n!}\sum_{\sigma\in S_n}\mbox{sgn}(\sigma)
v_{\sigma(1)}\otimes\ldots\otimes v_{\sigma(n)}\in
\underbrace{V\otimes\ldots\otimes V}_n.$$ Thus, this operator is
smooth as well.

\begin{lemma}
The subset diffeology and the pushforward diffeology by $\mbox{Alt}$
on the space $\mathcal{A}_n(V)$ of all antisymmetric $n$-tensors on
$V$ coincide.
\end{lemma}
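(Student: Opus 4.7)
The plan is to mirror, step by step, the proof of the analogous statement for $\mbox{Sym}$ given in the preceding lemma. The three ingredients needed are: that $\mbox{Alt}:V^{\otimes n}\to V^{\otimes n}$ is smooth (already established in the discussion immediately preceding the statement, by the same argument as for $\mbox{Sym}$: it is a signed sum of permutations, each of which is smooth for the tensor product diffeology, and vector space diffeologies are closed under finite linear combinations with scalar coefficients); that its image lies in $\mathcal{A}_n(V)$ by construction; and, the only genuinely new point, that its restriction to $\mathcal{A}_n(V)$ is the identity. Granted these three facts, the two diffeologies on $\mathcal{A}_n(V)$ coincide by a short formal argument.

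The only verification that differs substantively from the symmetric case is the identity-on-image property. For an antisymmetric tensor $\omega\in\mathcal{A}_n(V)$ one has $\sigma\cdot\omega=\mbox{sgn}(\sigma)\,\omega$ for every $\sigma\in S_n$, whence
\[\mbox{Alt}(\omega)=\frac{1}{n!}\sum_{\sigma\in S_n}\mbox{sgn}(\sigma)\,\sigma\cdot\omega=\frac{1}{n!}\sum_{\sigma\in S_n}\mbox{sgn}(\sigma)^2\,\omega=\omega,\]
where the identity $\mbox{sgn}(\sigma)^2=1$ plays the role that the triviality of the action played for symmetric tensors. Thus $\mbox{Alt}$ is a retraction of $V^{\otimes n}$ onto $\mathcal{A}_n(V)$.

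Given this retraction property, I would conclude exactly as in the previous lemma. On the one hand, $\mbox{Alt}$ is smooth as a map into $V^{\otimes n}$ and takes values in $\mathcal{A}_n(V)$, hence smooth for the subset diffeology on $\mathcal{A}_n(V)$; since the pushforward diffeology is the finest diffeology on $\mathcal{A}_n(V)$ making $\mbox{Alt}$ smooth, the pushforward diffeology is \emph{a priori} finer than the subset diffeology. On the other hand, if $p:U\to\mathcal{A}_n(V)$ is a plot for the subset diffeology and $i:\mathcal{A}_n(V)\hookrightarrow V^{\otimes n}$ is the inclusion, then $i\circ p$ is a plot of the ambient tensor product, and the retraction property yields the equality $p=\mbox{Alt}\circ(i\circ p)$, which exhibits $p$ as a plot of the pushforward diffeology. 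Since the whole argument is fully parallel to the preceding lemma, I do not expect any serious obstacle; the only mildly delicate point is the signed identity above, which amounts to an elementary computation in the symmetric group.
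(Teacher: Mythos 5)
Your proposal is correct and follows exactly the route the paper takes: the paper's own proof simply refers back to the symmetrization case, noting that the idempotent (retraction) property holds for $\mbox{Alt}$ as well, which is precisely the point you verify explicitly via the computation $\mbox{Alt}(\omega)=\frac{1}{n!}\sum_{\sigma}\mbox{sgn}(\sigma)^2\omega=\omega$ on antisymmetric tensors. The remaining two-sided comparison of the subset and pushforward diffeologies is identical to the argument for $\mbox{Sym}$, so nothing further is needed.
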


\begin{proof}
The proof is the same as in the case of the symmetrization operator,
since in particular the idempotent property (\emph{i.e.},
$\mbox{Alt}\circ\mbox{Alt}=\mbox{Alt}$) holds for $\mbox{Alt}$ just
the same.
\end{proof}

\paragraph{The exterior product} Let $V$ again be a
finite-dimensional diffeological space; as usual, the linear
$n$-forms on $V$ (\emph{i.e.}, all antisymmetric covariant
$n$-tensors) form the space
$$\bigwedge^n(V):=\mbox{Alt}(\underbrace{V^*\otimes\ldots\otimes V^*}_n),$$
which is also the space that previously we denoted by
$\mathcal{A}_n(V^*)$. Let us consider the usual exterior product as
a map
$$\wedge:\bigwedge^k(V)\times\bigwedge^l(V)\to\bigwedge^{k+l}(V)$$
between the diffeological vector spaces. Recall that this map acts
by the normalized antisymmetrization of the tensor product of any
two tensors, which immediately implies that it is smooth (note that
the finite-dimensional case here coincides with the standard one,
since the dual $V^*$ of a finite-dimensional diffeological $V$
always carries the standard diffeology).

\section{Instances of diffeological algebras and diffeological modules}

We consider here several instances of diffeological algebras and
diffeological modules, concentrating on providing examples of what
changes when a well-known algebra is endowed with a diffeology.

\subsection{Algebras}

Our aim here is to consider some matrix algebras, endowed with
various diffeologies, and answer a few obvious questions, such as:
what are the diffeologies such that the matrix product is \emph{not}
smooth? or the trace? or the determinant? Surprisingly or not, it is
quite easy to find matrix algebras and diffeologies where neither of
these is smooth.

\paragraph{The trace} This is probably the simplest case, meaning
that it is easy to find examples of algebras (more precisely, put,
on any algebra of square matrices, diffeologies) such that the trace
is not a smooth function. A simple example of this sort readily
comes by extending our examples of non-standard diffeological vector
spaces.

\begin{example}\label{algebra:non:diffeological:ex}
Let $V=\mathcal{M}_{2\times 2}(\matR)$ be the usual algebra of
$2\times 2$ matrices with real coefficients; endow it with the
vector space diffeology generated by the plot $p:\matR\ni
x\mapsto\left(\begin{array}{cc} 0 & 0 \\ 0 & |x|\end{array}\right)$.
Then it is obvious that the trace is not a smooth function (into the
standard $\matR$), since the composition $\mbox{tr}\circ p$, which
is the map $x\mapsto|x|$, is not a smooth function.
\end{example}

The example just given is a rather crude attempt, since, as we will
discover shortly, $V$ is not a diffeological algebra (it is easy to
see that the matrix product is not smooth). Let us briefly see the
case of the determinant.

\paragraph{The determinant} It is also easy to see that the
determinant is not a smooth function on $V$ of the above example.
Indeed, since the diffeology in question is a vector space
diffeology, the map $q:\matR\ni x\mapsto
p(x)+\left(\begin{array}{cc} 1 & 0 \\ 0 & 0\end{array}\right)$ is a
plot of it (being the sum of $p$ with the constant plot
$x\mapsto\left(\begin{array}{cc} 1 & 0 \\ 0 & 0\end{array}\right)$).
Once again, $\det\circ q$ is the map $x\mapsto|x|$, so the
determinant is not smooth as a map into the standard $\matR$.

\paragraph{The product} Finally, it is equally easy to see that the
product map on $V$ is not a smooth map in $V$. It suffices to
consider the product $\left(\begin{array}{cc} 1 & 1\\
0 & 1\end{array}\right)\left(\begin{array}{cc} 0 & 0\\
0 & |x|\end{array}\right)=\left(\begin{array}{cc} 0 & |x|\\
0 & |x|\end{array}\right)$. The matrix on the right is not a plot of
$V$ (for any plot $p':U\to V$ of $V$ the $(1,2)$th coefficient of
$p(u)$ is an ordinary smooth function in $u$), so $V$ is not a
diffeological algebra.

\paragraph{Generating an algebra diffeology} Always in reference to
our above example, the question of what is the analogue in case of
algebras of the vector space diffeology generated by a given
(collection of) plot(s), is rather natural. Of course, the abstract
answer is obvious and is (at least) implicit elsewhere: if $A$ is an
algebra and $\mathcal{A}$ is a collection of maps from domains of
Euclidean spaces to $A$ then the \textbf{algebra diffeology
generated by $\mathcal{A}$} is the finest diffeology on $A$ that
contains $\mathcal{A}$ and such that the addition, scalar
multiplication, and the algebra product are all smooth. What we are
wondering about is a concrete description, something which would
give us an idea of, for instance, the algebra diffeology on the
above $V$ generated by $p$.

\begin{oss}
The algebra diffeology on $V$ generated by $p$ is the vector space
diffeology generated by the following four maps:
$$p_{11}:\matR\ni x\mapsto\left(\begin{array}{cc} |x| & 0\\ 0 & 0\end{array}\right),\,\,\,
p_{12}:\matR\ni x\mapsto\left(\begin{array}{cc} 0 & |x|\\
0 & 0\end{array}\right),$$
$$p_{21}:\matR\ni x\mapsto\left(\begin{array}{cc} 0 & 0\\ |x| & 0 \end{array}\right),\,\,\,
p_{22}:\matR\ni x\mapsto\left(\begin{array}{cc} 0 & 0\\
0 & |x|\end{array}\right).$$
\end{oss}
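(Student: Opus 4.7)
The statement asserts that two diffeologies on $V=\mathcal{M}_{2\times 2}(\matR)$ coincide: the algebra diffeology $\calD_{\text{alg}}$ generated by the single plot $p$ and the vector space diffeology $\calD_{\text{vec}}$ generated by the four plots $p_{ij}$. The plan is a standard two-way inclusion, invoking the minimality property of each generated diffeology.

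The inclusion $\calD_{\text{vec}} \subseteq \calD_{\text{alg}}$ is almost immediate. Writing $E_{ij}$ for the standard matrix units, a direct use of $E_{ab}E_{cd}=\delta_{bc}E_{ad}$ gives $E_{i2}\,p(x)\,E_{2j} = |x|\,E_{ij} = p_{ij}(x)$. Since the constant maps $u\mapsto E_{ij}$ are plots of every diffeology and the product in $\calD_{\text{alg}}$ is smooth, each $p_{ij}$ is a plot of $\calD_{\text{alg}}$. The latter, being in particular a vector space diffeology containing the four $p_{ij}$, must contain the finest such vector space diffeology, which is exactly $\calD_{\text{vec}}$.

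For the reverse inclusion $\calD_{\text{alg}} \subseteq \calD_{\text{vec}}$, one observes first that $p=p_{22}\in\calD_{\text{vec}}$, so it suffices to show $\calD_{\text{vec}}$ is itself an algebra diffeology: then by minimality of $\calD_{\text{alg}}$ among algebra diffeologies containing $p$, the inclusion follows. By the standard description of a vector space diffeology generated by a family of plots, any plot $q:U\to V$ of $\calD_{\text{vec}}$ admits, locally around each point of $U$, an expression of the form
$$q(u)=S(u)+\sum_{k=1}^N \mu_k(u)\,|\phi_k(u)|\,E_{i_k j_k},$$
where $S:U\to V$ is a standard smooth matrix-valued map and $\mu_k,\phi_k:U\to\matR$ are ordinary smooth functions. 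The substance of the proof lies in expanding the pointwise matrix product $q^{(1)}(u)\,q^{(2)}(u)$ of two such plots and checking that each resulting term is again of this template.

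The terms fall into three types: products of two standard smooth matrix-valued maps (obviously smooth); mixed products of a smooth matrix with a term $\mu_k(u)|\phi_k(u)|E_{ij}$ (which distribute, via the identity $E_{ab}E_{cd}=\delta_{bc}E_{ad}$, into smooth scalar multiples of expressions of the form $|\phi_k(u)|E_{i'j'}$); and — the genuine obstacle — cross terms containing two absolute values $|\phi^{(1)}_k(u)|\cdot|\phi^{(2)}_l(u)|$. These last terms are handled by the elementary identity $|a|\,|b|=|ab|$ on $\matR$, which collapses two absolute values of smooth functions into a single absolute value of a smooth function; combined with $E_{ab}E_{cd}=\delta_{bc}E_{ad}$, this keeps the product in the prescribed template. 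Hence the matrix product is smooth for $\calD_{\text{vec}}$, so $\calD_{\text{vec}}$ is an algebra diffeology containing $p$, and the two diffeologies coincide. The whole argument hinges on the multiplicativity of $|\cdot|$; without it, one would be forced to enlarge the generating family.
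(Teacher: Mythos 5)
Your proposal is correct and follows essentially the same route as the paper: one direction multiplies $p$ by constant matrix units (the paper's products with $E_{12}$ and $E_{21}$, your $E_{i2}\,p(x)\,E_{2j}=|x|E_{ij}$) to show any algebra diffeology containing $p$ contains all four $p_{ij}$, and the other direction verifies that the vector space diffeology generated by the $p_{ij}$ is already an algebra diffeology by checking closure of the local plot form under matrix products, the key point being $|a|\,|b|=|ab|$ (which the paper phrases as the extension of $C^{\infty}(\matR,\matR)$ by $|\cdot|$ having degree $1$, since $|x|^2=x^2$). Your explicit multi-term template for plots is, if anything, slightly more careful than the paper's single-term entries $f_{ij}+g_{ij}|h_{ij}|$.
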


\begin{proof}
We need to show two things: one, that the vector space diffeology
generated by the maps $p_{ij}$ is an algebra diffeology, and, two,
that any algebra diffeology on $V$ that contains $p=p_{22}$, also
contains $p_{11}$, $p_{12}$, and $p_{21}$. Now, the reason why the
vector space diffeology generated by these four maps is also an
algebra diffeology is that any plot of it either is locally constant
or locally filters through a plot of form
$$\matR\ni x\mapsto\left(\begin{array}{cc}
f_{11}(x)+g_{11}(x)|h_{11}(x)| & f_{12}(x)+g_{12}(x)|h_{12}(x)|\\
f_{21}(x)+g_{21}(x)|h_{21}(x)| & f_{22}(x)+g_{22}(x)|h_{22}(x)|
\end{array}\right),$$
where $f_{ij},g_{ij},h_{ij}$ are some usual smooth functions. Now,
since $|x|^2=x^2$ is a smooth function, the extension of the ring
$C^{\infty}(\matR,\matR)$ by the absolute value function
$x\mapsto|x|$ is of degree $1$ over $C^{\infty}(\matR,\matR)$, which
implies precisely that the product of two matrices as above is again
a matrix of the same form, that is, the product is smooth as a map
$V\times V\to V$.

Finally, in order to prove that an algebra diffeology on $V$
containing $p_{22}$, must also contain all the other maps $p_{ij}$,
it suffices to consider the products
$$\left(\begin{array}{cc} 0 & 1\\ 0 & 0\end{array}\right)\cdot\left(\begin{array}{cc} 0 & 0\\ 0 & |x|\end{array}\right)=
\left(\begin{array}{cc}0 & |x|\\ 0 & 0\end{array}\right),\,\,\,
\left(\begin{array}{cc} 0 & 0\\
0 & |x|
\end{array}\right)\cdot\left(\begin{array}{cc} 0 & 0\\ 1 & 0\end{array}\right)=
\left(\begin{array}{cc} 0 & 0\\ |x| & 0\end{array}\right),$$
$$\left(\begin{array}{cc} 0 & 1\\ 0 & 0\end{array}\right)\cdot\left(\begin{array}{cc} 0 & 0\\ 0 & |x|\end{array}\right)
\cdot\left(\begin{array}{cc} 0 & 0\\
1 & 0\end{array}\right)=\left(\begin{array}{cc} |x| & 0\\
0 & 0\end{array}\right).
$$ (The matrices on the right-hand side of these three expressions must be values at
$x$ of some plot of the diffeological algebra $V$, and these values
are respectively $p_{12}(x)$, $p_{21}(x)$, and $p_{11}(x)$).
\end{proof}

\paragraph{Are $\det$ and $\mbox{tr}$ always smooth for a diffeological
algebra?} Suppose, on the other hand, that an algebra of matrices
$\mathcal{M}_{n\times n}(\matR)$ is endowed with an algebra
diffeology. It is then natural to wonder whether the determinant and
the trace are necessarily smooth functions. The answer is negative
and easily follows from the example already considered, that of
$V=\mathcal{M}_{2\times 2}(\matR)$ endowed with the algebra
diffeology generated by the plot $p:x\mapsto\left(\begin{array}{cc} 0 & 0\\
0 & |x|\end{array}\right)$.

\begin{lemma}
If $V$ is as above, $\det:V\to\matR$ and $\mbox{tr}:V\to\matR$ are
not smooth as functions into the standard $\matR$.
\end{lemma}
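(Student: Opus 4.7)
The plan is to observe that the examples used earlier to show non-smoothness of the trace and determinant for the \emph{vector space} diffeology generated by $p$ transfer verbatim, because every plot that appeared there remains a plot of the (a priori larger) algebra diffeology.

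More precisely, by the preceding Observation, the algebra diffeology on $V$ generated by $p$ coincides with the vector space diffeology generated by $p_{11},p_{12},p_{21},p_{22}$. In particular $p = p_{22}$ is still a plot. For the trace, I would simply remark that $\mbox{tr}\circ p_{22}$ is the map $x\mapsto |x|$, which is not smooth as a function $\matR\to\matR$ in the standard sense; hence $\mbox{tr}:V\to\matR$ is not smooth.

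For the determinant, since the algebra diffeology is in particular a vector space diffeology, the sum of $p_{22}$ with the constant plot $x\mapsto\left(\begin{array}{cc} 1 & 0 \\ 0 & 0 \end{array}\right)$ is again a plot. Call this plot $q$, so that
$$q:\matR\ni x\mapsto \left(\begin{array}{cc} 1 & 0 \\ 0 & |x|\end{array}\right).$$
Then $\det\circ q$ is again $x\mapsto |x|$, so $\det:V\to\matR$ is not smooth either.

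There is no real obstacle here: the whole content is the Observation that the algebra diffeology generated by $p$ is still a vector space diffeology containing $p$, which reduces the statement to the two elementary calculations already carried out in the paragraphs on the trace and the determinant. I would write the proof in two short lines, one for $\mbox{tr}$ and one for $\det$, each exhibiting an explicit plot whose composition with the function in question equals $x\mapsto |x|$.
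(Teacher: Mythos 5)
Your proof is correct and follows essentially the same route as the paper: exhibit the plot $p$ (giving $\mbox{tr}\circ p = |x|$) and the plot $q = p + \mbox{const}$ (giving $\det\circ q = |x|$), neither of which is a plot of the standard $\matR$. The only cosmetic difference is that you invoke the preceding Observation to see that $p$ is a plot, whereas this is immediate from the definition of the diffeology generated by $p$ (it contains $p$ by construction), which is all the paper uses.
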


\begin{proof}
The trace is not smooth since $\mbox{tr}\circ p$ is the absolute
value function, \emph{i.e.}, not a plot for the standard $\matR$. As
for the determinant, note first that, since any constant map is a
plot for any diffeology, and the addition is smooth, the map $q:x\mapsto\left(\begin{array}{cc} 1 & 0\\
0 & |x|\end{array}\right)$ is a plot of $V$. Since $\det\circ q$ is
again the absolute value function, it is not a plot of the standard
$\matR$, which implies that $\det$ is not a smooth function.
\end{proof}

\subsection{Modules}

We now turn to illustrate in a concrete fashion the concept of a
diffeological module. We first comment on the usual action of the
matrix algebra $\mathcal{M}_{n\times n}(\matR)$ on a
finite-dimensional diffeological vector space $V$, identified with
$\matR^n$ and endowed with some (non-standard) diffeology $\calD_V$.

\subsubsection{The functional diffeology on $L^{\infty}(V,V)$ and
multiplication by matrices}

Let us briefly explain the relation between the two. By the standard
property of the functional diffeology, a map $t:U\to
L^{\infty}(V,V)$ is a plot for the functional diffeology if (and
only if) for every plot $q:U'\to V$ of $V$ the evaluation of $t(u)$
on $s(u')$ is a plot of $V$, with the domain of definition $U\times
U'$. So if $\mathcal{M}_{n\times n}(\matR)$ is endowed with an
algebra diffeology $\calD_{\mathcal{M}}$, its action on $V$ by the
usual left multiplication is smooth if and only if for every plot
$A:U\to\mathcal{M}_{n\times n}(\matR)$ of the matrix algebra and for
every plot $p:U'\to V$ of $V$ the assignment
$$U\times U'\ni(u,u')\mapsto A(u)\cdot p(u')\in V$$
defines again a plot of $V$. The condition, of course, remains the
same if in place of $\mathcal{M}_{n\times n}(\matR)$ we consider any
of its subalgebras (with an algebra diffeology).

\subsubsection{The action of $\mathcal{M}_{2\times 2}(\matR)$ on a
non-standard $\matR^2$}

This is one of the simplest examples of a diffeological module,
which however indicates already the main differences of the concept
upon introducing a diffeological structure.

\paragraph{The domain of action and the subalgebra with smooth action}
Let $V$ be a diffeological vector space, $V=\matR^2$ with the vector
space diffeology generated by the plot $p:x\mapsto(0,|x|)$, and let
$\mathcal{A}=\mathcal{M}_{2\times 2}(\matR)$ the algebra of $2\times
2$ matrices, whose diffeology we do not specify for the moment.
Indeed, we are about to discover that $\mathcal{A}$ taken as a whole
does not admit a smooth action on $V$, and this is because it
contains matrices that while they obviously induce homomorphisms
$V\to V$, these homomorphisms are not
smooth. As an example, it suffices to take the matrix $\left(\begin{array}{cc} 1 & 1\\
0 & 1\end{array}\right)$; in general, it is easy to observe that a
matrix $A\in\mathcal{A}$ induces a smooth endomorphism of $V$ if and
only if $(0,1)$ is an eigenvector of it. This means only the
lower-triangular matrices (\emph{i.e.}, matrices $A$ such that
$(A)_{12}=0$) act smoothly on $V$. Denote by $\mathcal{A}_{tr}$ the
algebra of all such matrices.

\paragraph{The diffeology on the subalgebra acting} Let us now consider
the potential diffeologies on $\mathcal{A}_{tr}$. The ultimate
condition that such diffeologies should satisfy is that the
action-by-left-multiplication map
$$c:\mathcal{A}_{tr}\to L^{\infty}(V,V)$$
be smooth for the chosen diffeology on $\mathcal{A}_{tr}$ and the
functional diffeology on $L^{\infty}(V,V)$; and this, by the
characterization of functional diffeologies, means that, for every
plot $q:U\to\mathcal{A}_{tr}$ of $\mathcal{A}_{tr}$ and for every
plot $s:U'\to V$ of $V$, the map $U\times U'\ni(u,u')\mapsto
q(u)\cdot s(u')\in V$ must be a plot of $V$.

\begin{prop}
Let $\mathcal{A}_{tr}$ be endowed with a diffeology $\calD_{matr}$
such that the left-multiplication action $c$ on $V$ is smooth. Then
$\calD_{matr}$ is the standard diffeology on $\mathcal{A}_{tr}$.
\end{prop}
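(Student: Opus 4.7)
The goal is to show that $\calD_{matr}$ coincides with the standard diffeology $\calD^{\mathrm{std}}$ on $\mathcal{A}_{tr}\cong\matR^3$, and I would do this by establishing the two inclusions separately. For $\calD^{\mathrm{std}}\subseteq\calD_{matr}$: the smoothness of $c$ into $L^{\infty}(V,V)$ forces $\calD_{matr}$ to be compatible with the vector-space operations on $\mathcal{A}_{tr}$ inherited via $c$ from the functional diffeology, so in particular $\calD_{matr}$ is a vector space diffeology. Then, by the footnote earlier in the paper, the standard diffeology is the finest vector space diffeology on any finite-dimensional vector space identified with $\matR^{n}$, giving $\calD^{\mathrm{std}}\subseteq\calD_{matr}$ immediately.

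For the substantive inclusion $\calD_{matr}\subseteq\calD^{\mathrm{std}}$, I would take an arbitrary plot $q:U\to\mathcal{A}_{tr}$ of $\calD_{matr}$ and write it as $q(u)=\begin{pmatrix}\alpha(u) & 0 \\ \beta(u) & \gamma(u)\end{pmatrix}$, aiming to prove that $\alpha$, $\beta$, and $\gamma$ are all standard smooth. By the characterization of the functional diffeology recalled immediately before the proposition, smoothness of $c\circ q$ means that for every plot $s:U'\to V$ the map $(u,u')\mapsto q(u)\cdot s(u')$ is a plot of $V$. The first, and easiest, test is to take the constant plot $s\equiv(1,0)$: the resulting map $(u,u')\mapsto(\alpha(u),\beta(u))$ must be a plot of $V$, and since every plot of $V$ has a standard smooth first coordinate (the generator $p$ has vanishing first coordinate, so first coordinates of plots of $V$ are built only from smooth functional combinations of constants), this forces $\alpha$ to be standard smooth.

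The main obstacle, which I expect to occupy the bulk of the argument, lies in forcing $\beta$ and $\gamma$ to be standard smooth. Testing against $s\equiv(0,1)$, against $s=p$, and against mixed plots such as $s(u')=(u',0)$ or $s(u')=(u',|u'|)$ gives constraints on the second coordinate of the form $g(u,u')+\sum_{i}\lambda_{i}(u,u')|\phi_{i}(u,u')|$, which a priori only requires $\beta$ and $\gamma$ to be of this same ``standard smooth plus absolute values'' shape rather than standard smooth outright. To rule out genuine absolute-value contributions in $\beta$ and $\gamma$, I would combine several such tests and invoke a uniqueness argument for the decomposition of a scalar function into a standard smooth summand and absolute-value terms, varying the auxiliary parameter $u'$ in a targeted way in order to isolate putative coefficients of, say, $|u|$; a parallel route would exploit the algebra structure on $\calD_{matr}$ via products of $q$ with suitable constant test matrices to reduce the general case to the single-entry analysis. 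Once $\alpha$, $\beta$, and $\gamma$ are known to be standard smooth, $q$ is standard smooth and the desired inclusion follows.
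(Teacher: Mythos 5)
Your framing of the substantive direction is right, and your observation that the test against the constant plot $(1,0)$ forces only $\alpha$ to be ordinary smooth, while $\beta$ and $\gamma$ are a priori constrained only to the shape ``smooth plus smooth$\,\cdot\,|$smooth$|$'', is accurate. But exactly there the proposal stops: the decisive step, ruling out genuine absolute-value contributions in $\beta$ and $\gamma$, is announced rather than carried out, and the announced strategies cannot close it by themselves. Indeed, for any plot $s=(s_1,s_2)$ of $V$ the product $q(u)\cdot s(u')$ has first coordinate $\alpha(u)s_1(u')$ and second coordinate $\beta(u)s_1(u')+\gamma(u)s_2(u')$; if $\beta$ and $\gamma$ are themselves of the form $f+\sum_i g_i|h_i|$ with $f,g_i,h_i$ smooth, these expressions are again of exactly that form in the variables $(u,u')$, hence plots of $V$. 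So every evaluation test you can run is already satisfied by non-smooth $\beta,\gamma$ of the extension-ring shape, and a ``uniqueness of the decomposition'' argument applied to such tests has nothing to bite on: the constraint coming from smoothness of $c$ alone simply does not distinguish the standard entries from extension-ring entries in the bottom row. Some essential use of the hypothesis that $\calD_{matr}$ is an algebra diffeology (which is the standing assumption of this section) is unavoidable.

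That is what the paper does at the point where your plan is vaguest: after the constant-vector tests it takes as established that the $(2,1)$-entry of every plot of $\mathcal{A}_{tr}$ is an ordinary smooth function, and then, assuming $\gamma=f_1+g_1|h_1|$ were not smooth, it right-multiplies the plot $q$ by the constant matrix $\left(\begin{array}{cc}1&0\\1&1\end{array}\right)$, obtaining a plot whose $(2,1)$-entry is $\beta+\gamma$; smoothness of $(2,1)$-entries then forces $\gamma$ smooth, a contradiction. Your ``parallel route'' via products with constant test matrices is in this spirit, but without specifying which products to take and, crucially, without an argument pinning down the $(2,1)$-entry first (your own analysis correctly shows the evaluation test yields only the weaker extension-ring form there), it remains a plan rather than a proof; this is the concrete gap you would have to fill. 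A smaller issue: your first inclusion is not justified as written, since smoothness of $c$ alone does not make $\calD_{matr}$ a vector space diffeology (the discrete diffeology also renders $c$ smooth); the inclusion of the standard diffeology should instead be drawn from the assumption that $\calD_{matr}$ is an algebra, hence in particular a vector space, diffeology, after which your appeal to the footnote on fine diffeologies is fine.
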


\begin{proof}
Let us consider a pair $(q,s)$, where $q:U\to\mathcal{A}_{tr}$ is a
plot of $\mathcal{A}_{tr}$ and $s:U\to V$ is a plot $V$; let us see
under which conditions the map $(u,u')\mapsto q(u)\cdot s(u')$ is a
plot of $V$. Now, $s(u')=(s_1(u'),s_2(u'))$, where $s_1:U'\to\matR$
is a usual smooth function, $s_1\in C^{\infty}(U',\matR)$, while
$s_2$ belongs to the extension of the ring $C^{\infty}(U',\matR)$ by
the absolute value function. In practice, this means that
$s_2(u')=f(u')+g(u')|h(u')|$ for some $f,g,h\in
C^{\infty}(U',\matR)$. Note that the product $q(u)\cdot s(u')$ must
again be of this form (with $U\times U'$ in place of $U'$).

Let us write $q(u)=\left(\begin{array}{cc} q_{11}(u) & 0 \\
q_{21}(u) & q_{22}(u)\end{array}\right)$; we need to show that the
three functions $q_{ij}$ are ordinary smooth functions. Considering
first the case of $s$ the constant plot with the value $(1,0)$ (and
using the characterization of plots of $V$ just given), we
immediately conclude that $q_{11}$ and $q_{21}$ must be ordinary
smooth functions, while taking the constant plot with value $(0,1)$
we conclude that $q_{22}$ must have the form indicated for $s_2$,
\emph{i.e.}, $q_{22}(u)=f_1(u)+g_1(u)|h_1(u)|$ for some
$f_1,g_1,h_1\in C^{\infty}(U,\matR)$.

Suppose that $q_{22}$ is not a smooth function; then it has form
$f_1(u)+g_1(u)|h_1(u)|$, where $g_1$ and $h_1$ are not everywhere
zero. However, the diffeology in question is an algebra diffeology,
so the following must also be a plot of it:
$$u\mapsto \left(\begin{array}{cc}
q_{11}(u) & 0 \\ q_{21}(u) &
q_{22}(u)\end{array}\right)\cdot\left(\begin{array}{cc} 1
& 0\\ 1 & 1\end{array}\right)=\left(\begin{array}{cc} q_{11}(u) & 0 \\
q_{21}(u)+q_{22}(u) & q_{22}(u)\end{array}\right).$$ Since we have
already established that the $(2,1)$-th component of any plot of
$\mathcal{A}_{tr}$ must be a smooth function, we obtain a
contradiction with the assumption that $q_{22}$ is not smooth. The
statement is proven.
\end{proof}

\paragraph{A subalgebra with smooth action and non-standard
diffeology} There does however exist a subalgebra of
$\mathcal{M}_{2\times 2}(\matR)$ with smooth action on $V$ and
non-standard diffeology. This is the subalgebra of diagonal
matrices, endowed with the algebra diffeology generated by the plot
$p(x)=\left(\begin{array}{cc} 0 & 0\\ 0 & |x|\end{array}\right)$
(this statement is quite obvious, so we make no further comment).

\subsubsection{The case of $\mathcal{M}_{3\times 3}(\matR)$ and
non-standard $\matR^3$}

Let us now consider a larger example. Specifically, we take
$V=\matR^3$ endowed with the vector space diffeology generated by
the plot $p:\matR\ni x\to |x|e_3\in V$; as in the case of dimension
$2$, consider $\mathcal{A}=\mathcal{M}_{3\times 3}(\matR)$, the
algebra of $(3\times 3)$-matrices, that acts as usual, by left
multiplication.

\paragraph{Smooth action on $V$} We can generalize a number of
observations already made in the case of $n=2$, namely, a matrix
$A\in\mathcal{A}$ defines a smooth endomorphism of $V$ if and only
if $(0,0,1)$ is an eigenvector of it. Since this implies that
$(A)_{13}=(A)_{23}=0$, we must restrict our attention to the
subalgebra $\mathcal{A}'$ of matrices satisfying this condition.

\paragraph{Diffeology on $\mathcal{A}'$} Let us consider the
possible choice(s) of algebra diffeology on $\mathcal{A}'$ that make
its action on $V$ smooth. Let $p:U\ni u\mapsto\left(\begin{array}{ccc} p_{11}(u) & p_{12}(u) & 0\\
p_{21}(u) & p_{22}(u) & 0\\ p_{31}(u) & p_{32}(u) & p_{33}(u)
\end{array}\right)$ be a plot of an algebra diffeology on
$\mathcal{A}'$. Taking products with the vector $(1,0,0)^t$ and
$(0,1,0)^t$ allows us to see that
$p_{11},p_{12},p_{21},p_{22}:U\to\matR$ must be ordinary smooth
functions for the action to be smooth, while taking the product with
$(0,0,1)^t$ shows that $p_{31},p_{32},p_{33}$ are of form
$f(u)+g(u)|h(u)|$ for some smooth functions $f,g,h$. It remains to
take into account the fact that the diffeology on $\mathcal{A}'$ is
an algebra diffeology, that is, that for any two plots $p,q$ of the
form just indicated their product is again a plot. This amounts to
checking that $(p(u)q(u))_{11}$, $(p(u)q(u))_{12}$,
$(p(u)q(u))_{21}$, $(p(u)q(u))_{22}$ are in $C^{\infty}(U,\matR)$,
while $(p(u)q(u))_{31}$, $(p(u)q(u))_{32}$, $(p(u)q(u))_{33}$ belong
to the extension of $C^{\infty}(U,\matR)$ by the absolute value
function; and this is established by the obvious calculation.

\paragraph{The block subalgebra $\mathcal{A}_{bl}$ and its action}
By $\mathcal{A}_{bl}$ we denote the subalgebra of $\mathcal{A}'$
that satisfies the additional condition that $(A)_{31}=(A)_{32}=0$.
It is quite easy to see that the coarsest algebra diffeology for
which the action of $\mathcal{A}_{bl}$ on $V$ is smooth consists
precisely of plots of form
$$p:U\ni u\mapsto\left(\begin{array}{ccc} p_{11}(u) & p_{12}(u) & 0\\
p_{21}(u) & p_{22}(u) & 0\\ 0 & 0 & p_{33}(u) \end{array}\right),$$
where $p_{11},p_{12},p_{21},p_{22}:U\to\matR$ are ordinary smooth
functions, while $p_{33}$ has form $u\mapsto f(u)+g(u)|h(u)|$, again
for ordinary smooth functions $f,g,h$. Finally, it is also easy to
see that the only other algebra diffeology on $\mathcal{A}_{bl}$
such that the left-multiplication map is smooth is the standard
diffeology (more precisely, the subset diffeology relative to the
standard diffeology on the standard algebra $\mathcal{M}_{3\times
3}(\matR)$).

\subsubsection{Other considerations}

Given a finite-dimensional diffeological space $V$, there is the
unique subspace $V_0$ of $V$ that is maximal for the following two
properties:
\begin{enumerate}
  \item the subset diffeology on $V_0$ is the standard diffeology;
  \item there is another subspace $V_1$ of $V$ such that $V=V_0\oplus
  V_1$ as a diffeological vector space, that is, the diffeology of
  $V$ is the direct sum diffeology relative to the subset diffeology
  on $V_0$ and $V_1$.
\end{enumerate}
The dimension of $V_0$ is that of the diffeological dual $V^*$ (see
\cite{pseudometric}).

Choose now a basis $\{v_1,\ldots,v_k\}$ of $V_0$ and a basis
$\{v_{k+1},\ldots,v_n\}$ of $V_1$. With respect to the basis
$\{v_1,\ldots,v_n\}$ of $V$ thus obtained, a matrix
$A\in\mathcal{M}_{n\times n}(\matR)$ defines a smooth endomorphism
of $V$ only if we have:
$$A=\left(\begin{array}{cc} A_k & 0\\ B & C \end{array}\right),$$
where $A_k\in\mathcal{M}_{k\times k}(\matR)$ and $0$ is the zero
$(k\times(n-k))$-matrix (then $B$ is of course of size $(n-k)\times
k$ and $C$ is of size $(n-k)\times(n-k)$). Note that it is only a
necessary condition, not a sufficient one.

Denoting by $\mathcal{A}_{n,k}$ the algebra of all $(n\times
n)$-matrices of this form (\emph{i.e.}, such that the upper
right-hand block of size $k\times(n-k)$ is the zero matrix), we
observe that (for the same reasons as it occurs in our examples) for
any plot $p:U\to\mathcal{A}_{n,k}$ of an algebra diffeology on
$\mathcal{A}_{n,k}$ for which the left-multiplication action on $V$
is smooth, the functions $p_{ij}:U\to\matR$ given by taking the
$(i,j)$-th coefficient of the matrix $p(u)$ are smooth in the usual
sense for $i,j=1,\ldots,k$. Not much can be said in general about
the rest of the functions $p_{ij}$, as it much depends on the
specific diffeology of $V$ (more precisely, on its non-standard
part).

\section{Diffeological Clifford algebras}

We now turn to the diffeological version of the Clifford algebra,
associated to a finite-dimensional diffeological vector space.

\subsection{The definition of the diffeology}

The explicit construction of each Clifford algebra $\cl(V,q)$ as a
quotient of the tensor algebra $T(V)$ yields the obvious quotient
diffeology on $\cl(V,q)$ (which is the pushforward of the diffeology
of $T(V)$ by the natural projection). This should be considered as
the most natural choice for the diffeology on $\cl(V,q)$ (the
possible alternatives are rather artificial, so we do not discuss
them).

\paragraph{The quotient diffeology from the tensor algebra} Let us
outline the construction of this diffeology. Recall first of all
that each space $V^{\otimes k}$, with $k=1,\ldots,n=\dim(V)$,
carries the tensor product diffeology (defined in \cite{vincent} and
\cite{wu}, and recalled in Section 2). Now, the tensor algebra
$T(V)$ is the direct sum of these spaces,
$T(V)=\bigoplus_{k=1}^nV^{\otimes k}$; we endow it with the sum
diffeology. Denote the resulting diffeology on $T(V)$ by
$\calD_{\otimes}$.

\begin{lemma}
The tensor algebra $T(V)$ considered with the diffeology
$\calD_{\otimes}$ is a diffeological algebra.
\end{lemma}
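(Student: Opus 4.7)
The plan is to verify the two conditions in the definition of a diffeological algebra: that $(T(V),\calD_{\otimes})$ is a diffeological vector space, and that the algebra product is smooth for the resulting product diffeology on $T(V)\times T(V)$. For the first condition, the key is that the sum diffeology $\calD_{\otimes}$ is, by definition, the coarsest diffeology for which every projection $\pi_k:T(V)\to V^{\otimes k}$ is smooth. Since each $V^{\otimes k}$ is already a diffeological vector space under the tensor product diffeology, the compositions of $\pi_k$ with the componentwise addition on $T(V)$ (respectively with scalar multiplication) factor as the addition (resp.\ scalar multiplication) on $V^{\otimes k}$ composed with $\pi_k\times\pi_k$ (resp.\ $\mathrm{id}\times\pi_k$), both smooth. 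The universal property of the sum diffeology then gives smoothness of $+$ and of $(\lambda,v)\mapsto\lambda v$ on $T(V)$.

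For the algebra product $m:T(V)\times T(V)\to T(V)$, the same universal property lets us reduce smoothness of $m$ to smoothness of $\pi_n\circ m$ for each $n$, and this decomposes as the \emph{finite} sum
$$\pi_n\circ m \;=\;\sum_{k+l=n}\mu_{k,l}\circ(\pi_k\times\pi_l),$$
where $\mu_{k,l}:V^{\otimes k}\times V^{\otimes l}\to V^{\otimes(k+l)}$ is the bilinear juxtaposition map $(a,b)\mapsto a\otimes b$. Since sums, products, and compositions of smooth maps are smooth, and the $\pi_k\times\pi_l$ are smooth by definition of $\calD_{\otimes}$, the problem collapses to the smoothness of each individual $\mu_{k,l}$.

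The smoothness of $\mu_{k,l}$ is the one real step of content. I would obtain it by appealing to the universal property of the tensor product diffeology recalled in the preliminaries: the canonical $(k+l)$-multilinear map $V^{\times(k+l)}\to V^{\otimes(k+l)}$ is smooth by construction of the tensor product diffeology, and it factors through $V^{\otimes k}\times V^{\otimes l}$ via $\mu_{k,l}$ composed with the universal maps of the two tensor factors; the universal property of Theorem 2.3.5 of \cite{vincent} then guarantees that $\mu_{k,l}$ itself is smooth. A more hands-on verification, equivalent in the finite-dimensional case, is that plots of $V^{\otimes k}$ locally have the form $u\mapsto p_1(u)\otimes\cdots\otimes p_k(u)$ (or finite linear combinations thereof) with each $p_i$ a plot of $V$, so juxtaposing two such plots yields a plot of the same type in $V^{\otimes(k+l)}$.

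No single step presents a serious obstacle; the delicate-looking part is really the bookkeeping between the sum diffeology on $T(V)$ and the tensor product diffeology on each summand. The point that makes the reduction go through cleanly is the finiteness of the sum $\sum_{k+l=n}$, which ensures that no convergence question arises and that the universal property of $\calD_{\otimes}$ applies directly.
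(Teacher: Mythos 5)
Your proof is correct and follows essentially the same route as the paper: reduce to the graded pieces via the product (direct-sum) diffeology on $T(V)$, and then observe that, the tensor product diffeology being a pushforward of the product diffeology, plots of $V^{\otimes k}$ locally lift to tuples of plots of $V$, so the juxtaposition maps $V^{\otimes k}\times V^{\otimes l}\to V^{\otimes(k+l)}\hookrightarrow T(V)$ are smooth --- which is exactly your ``hands-on'' verification and the paper's one-line argument. The only caveat is that your first route for the smoothness of $\mu_{k,l}$ really rests on the universal maps $V^{\times k}\to V^{\otimes k}$ being subductions (so that smoothness of $\mu_{k,l}\circ(\phi_k\times\phi_l)$ implies that of $\mu_{k,l}$) rather than on Theorem 2.3.5 of \cite{vincent} itself, but the plot-lifting argument you supply makes this precise.
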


\begin{proof}
A direct sum of diffeological vector spaces is always a
diffeological vector space for the product diffeology, so we only
need to explain why the tensor product operation is smooth as a map
$$V^{\otimes k}\times V^{\otimes l}\to T(V).$$
This follows automatically from the definition of the tensor product
diffeology, which (for us) is the quotient diffeology corresponding
to the product diffeology. Indeed, the product in $T(V)$ can be seen
as the composition
$$V^{\otimes k}\times V^{\otimes l}\to V^{\otimes(k+l)}\hookrightarrow T(V)$$
of the quotient projection with the natural inclusion, both of which
are smooth by construction.\footnote{Note that the natural embedding
of each summand into a direct sum of vector spaces being smooth is a
property of the product diffeology coming from vector space
diffeologies. There is no analogue of it for a general product
diffeology; then again, this is more because for a generic direct
product there are no canonical inclusions of the factors.}
\end{proof}

Since the Clifford algebra over $V$ is the quotient of $T(V)$ by its
ideal, the quotient diffeology coming from $\calD_{\otimes}$ makes
$\cl(V,q)$ into a diffeological algebra; we will consider $\cl(V,q)$
with this diffeology.

\paragraph{The universal map for diffeological Clifford algebras}
The diffeological analogue of the universal property for Clifford
algebras is the following statement.

\begin{prop}
Let $V$ be a diffeological vector space endowed with a smooth
bilinear symmetric form $q(,)$, let $A$ be a diffeological algebra
with unity, and let $f:V\to A$ be a smooth linear map such that
$$f(v)\cdot f(w)+f(w)\cdot f(v)=-4q(v,w)\cdot 1_A\mbox{ for all }v,w\in V.$$
Then there exists a (unique) smooth algebra homomorphism
$\hat{f}:\cl(V,q)\to A$ that extends $f$.
\end{prop}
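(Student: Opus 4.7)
The plan is to combine the classical universal property of the Clifford algebra (which provides existence and uniqueness of $\hat f$ as an abstract algebra homomorphism) with the definition of the diffeology on $\cl(V,q)$ as the pushforward of $\calD_{\otimes}$ along the canonical projection $\pi: T(V) \to \cl(V,q)$. The hypothesis $f(v)f(w) + f(w)f(v) = -4q(v,w)\cdot 1_A$ is precisely what is required for the classical universal property to produce a unique algebra homomorphism $\hat f: \cl(V,q) \to A$ satisfying $\hat f \circ \varphi = f$; uniqueness in the diffeological category is inherited from uniqueness in the algebraic one. So the only real content is verifying that the algebraically determined $\hat f$ is smooth.

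Next, I would invoke the characterization of pushforward diffeologies: a map out of a pushforward is smooth if and only if its precomposition with the pushing-forward map is smooth. Applied here, $\hat f$ is smooth if and only if $F := \hat f \circ \pi : T(V) \to A$ is smooth. The map $F$ is itself the unique algebra homomorphism $T(V) \to A$ extending $f$, and on the summand $V^{\otimes k}$ of $T(V)$ it acts by $v_1 \otimes \cdots \otimes v_k \mapsto f(v_1)\cdots f(v_k)$. Using the direct sum diffeology on $T(V)$, smoothness of $F$ reduces to the smoothness of each $F_k := F|_{V^{\otimes k}}$.

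For each $k$, consider the multilinear map $\mu_k : V \times \cdots \times V \to A$, $\mu_k(v_1,\ldots,v_k) = f(v_1)\cdots f(v_k)$. This is smooth, being obtained by composing $f$ (smooth by hypothesis) with the iterated product of the diffeological algebra $A$ (smooth by definition). By the universal property of the diffeological tensor product (Theorem 2.3.5 of \cite{vincent}, recalled in Section 2), $\mu_k$ corresponds to a unique smooth linear map $V^{\otimes k} \to A$, and by algebraic uniqueness this map must coincide with $F_k$. Thus each $F_k$ is smooth, and the smoothness of $F$ (and hence of $\hat f$) follows.

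The main obstacle I foresee is the transition from smoothness on each summand $V^{\otimes k}$ to smoothness on the whole of $T(V)$: one must use the fact that $\calD_{\otimes}$ is defined so that plots are controlled componentwise together with the smoothness of (finite) addition in $A$, so that $F$ evaluated on an arbitrary plot of $T(V)$ is a finite sum of smooth expressions and therefore a plot of $A$. All other steps are essentially formal consequences of the universal properties of the quotient, tensor product, and Clifford algebra, combined with the compatibility condition on $f$.
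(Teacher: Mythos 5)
Your proposal is correct and follows essentially the same route as the paper: reduce the smoothness of $\hat{f}$ to that of the universal extension $T(V)\to A$ via the defining pushforward/quotient diffeology on $\cl(V,q)$, and then deduce the latter from the smoothness of the product in $A$. The only differences are cosmetic --- where the paper checks smoothness on the summands by a direct plot computation (illustrated for $V\otimes V$), you package it through the universal property of the diffeological tensor product (Theorem 2.3.5 of \cite{vincent}) and you make explicit the direct-sum step that the paper leaves implicit.
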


\begin{proof}
What we essentially need to do is to check whether the standard
universal homomorphism $f^{\otimes}:T(V)\to A$ extending $f$ is
smooth. Indeed, by assumption on $f$, the homomorphism $f^{\otimes}$
yields a well-defined $\cl(V,q)\to A$, which, by definition of the
quotient diffeology, is smooth if and only if $f^{\otimes}$ is
smooth. To conclude, observe that this latter property follows from
the smoothness of the product in $A$; as an illustration, let us
show, for instance, that the standard extension $\tilde{f}:V\otimes
V\to A$ (given by $\tilde{f}(v\otimes w)=f(v)f(w)$) is smooth.

Indeed, let $p:U\to V\otimes V$ be a plot; recall that locally $p$
writes as $p=\pi\circ(p_1,p_2)$, where $p_1,p_2$ are plots of $V$
defined (up to passing to a suitably small sub-domain) on $U$ and
$\pi:V\times V\to V\otimes V$ is the natural projection. Then we
have $(\tilde{f}\circ p)(u)=p_1(u)p_2(u)$, which is locally a plot
of $A$, since the multiplication in $A$ is smooth.
\end{proof}

\subsection{The exterior algebra as a diffeological Clifford module}

It has been recalled above that for each Clifford algebra $\cl(V,q)$
the exterior algebra $\bigwedge V$ is a Clifford module over
$\cl(V,q)$. We now show that this is true in the diffeological
context as well; more precisely, we show that the standard action of
$\cl(V,q)$ on $\bigwedge V$ is smooth. For the duration of this
section we assume that $q$ is a (diffeological) scalar product.

\paragraph{The standard action $c$} The usual action
$c:\cl(V,q)\to\mbox{End}(\bigwedge V)$ is obtained by first defining
it on $V$ and then extending it to the rest of $\cl(V,q)$; on $V$,
this action is given by setting
$$c(v)=\varepsilon(v)-i(v)\mbox{ for all }v\in V,$$
where $\varepsilon(v)$ acts on $\bigwedge V$ by the left exterior
product, $\varepsilon(v)(\alpha)=v\wedge\alpha$, for all
$\alpha\in\bigwedge V$, and $i(v)$ is the adjoint of
$\varepsilon(v)$ via $q$, \emph{i.e.}, it acts by
$$i(v)(w_1\wedge\ldots\wedge w_l)=\sum_{i=1}^l(-1)^{i+1}w_1\wedge\ldots\wedge q(v,w_i)\wedge\ldots\wedge w_l.$$

\paragraph{The action $c$ is smooth} This essentially follows from
the smoothness of all operations and that of $q$ (observe that, for
instance, the smoothness of the exterior product is due to the
diffeology of $\bigwedge V$ being the quotient diffeology of that of
$T(V)$ and equivalently, the pushforward of the latter by the
antisymmetrization operator).

\paragraph{The isomorphism $\cl(V,q)\cong\bigwedge V$} Recall that (in the
case when $q$ is a scalar product) there is a standard isomorphism
$\sigma:\cl(V,q)\to\bigwedge V$ acting by
$$\sigma(x)=c(x)(1)\mbox{ for every }x\in\cl(V,q).$$
We have the following:

\begin{prop}
The isomorphism $\sigma$ is smooth.
\end{prop}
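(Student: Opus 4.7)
The plan is to realize $\sigma$ as a composition of smooth maps, relying on the smoothness of $c$ established in the preceding paragraph together with the defining property of the functional diffeology on $L^{\infty}(\bigwedge V,\bigwedge V)$. The argument should be essentially a tautology once one has these two inputs in hand.

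First, I would rewrite the definition of $\sigma$ as
$$\sigma(x)=c(x)(1)=\mbox{\textsc{ev}}(c(x),1),$$
where $\mbox{\textsc{ev}}:L^{\infty}(\bigwedge V,\bigwedge V)\times\bigwedge V\to\bigwedge V$ is the evaluation map. By the very definition of the functional diffeology as the coarsest diffeology making the evaluation smooth, $\mbox{\textsc{ev}}$ is smooth with respect to the product diffeology on its domain, and no further check is needed for this factor.

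Next, I would consider the auxiliary map
$$\tilde{c}:\cl(V,q)\to L^{\infty}(\bigwedge V,\bigwedge V)\times\bigwedge V,\qquad \tilde{c}(x)=(c(x),1).$$
Its first component is smooth by the observation, already made in the preceding paragraph, that the standard Clifford action $c$ is smooth; its second component is constant, hence a plot (and in particular smooth) by the covering condition. Since the product diffeology is, by definition, the coarsest one making both projections smooth, a map into a product is smooth if and only if its components are, so $\tilde{c}$ is smooth.

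Finally, $\sigma=\mbox{\textsc{ev}}\circ\tilde{c}$ is smooth as a composition of two smooth maps, which yields the claim. I do not expect any genuine obstacle here; the only substantive ingredient is the smoothness of $c$, which has been argued in the preceding paragraph, and everything else is a routine bookkeeping exercise with the functional and product diffeologies.
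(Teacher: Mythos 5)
Your proof is correct and takes essentially the same approach as the paper: both arguments reduce the smoothness of $\sigma$ to the smoothness of the action $c$ together with the defining property of the functional diffeology, and then evaluate at the constant element $1$. The only difference is cosmetic --- you compose with the global evaluation map $\mbox{\textsc{ev}}$ directly, while the paper argues plotwise via the characterization of plots of $L^{\infty}(\bigwedge V,\bigwedge V)$ (1.57 of \cite{iglesiasBook}), which amounts to the same thing.
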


\begin{proof}
Let $p:U\to\cl(V,q)$ be a plot of the Clifford algebra $\cl(V,q)$;
we need to show that $\sigma\circ p$ is a plot of $\bigwedge V$. As
we have seen above, the action $c$ is smooth, which means that
$c\circ p:U\to L^{\infty}(\bigwedge V,\bigwedge V)$ is a plot for
the functional diffeology on $L^{\infty}(\bigwedge V,\bigwedge V)$.
By 1.57 of \cite{iglesiasBook}, this is equivalent to the smoothness
of the following map: $C:U\times\bigwedge V\to\bigwedge V$ acting by
$C(u,\alpha)=c(p(u))(\alpha)$. The map $C$ being smooth means that
its composition with any plot of $U\times\bigwedge V$ is a plot of
$\bigwedge V$. Observe now that the map $p':U\to U\times\bigwedge V$
given by $p'(u)=(u,1)$ is a plot of $U\times\bigwedge V$, and that
$C\circ p'=\sigma\circ p$, whence the conclusion.
\end{proof}

\subsection{Smooth decompositions of diffeological vector
spaces}\label{smooth:deco:vector:sp:sect}

Before going further, we need to verify a few statements regarding
some aspects of the behavior of diffeology with respect to
pushforwards; we do so for diffeological vector spaces, this being a
more general case. The following definition explains the matter.

\begin{defn}
Let $V$ be a diffeological vector space, and let $V=V_0\oplus V_1$
be its decomposition into a direct sum of two subspaces. We say that
this decomposition is \textbf{smooth} if the direct sum diffeology
relative to the subset diffeologies on $V_0$ and $V_1$ coincides
with the diffeology of $V$.
\end{defn}

Note that \emph{a priori} the direct sum diffeology is finer than
$V$'s own diffeology (that this can easily happen is illustrated by
Example 2.5 in \cite{pseudometric}). What we are now interested in
(although we do not consider this point comprehensively), is what
happens with (non)smoothness of a decomposition under linear maps.

\begin{lemma}
Let $V$ be a diffeological vector space, let $W$ be a vector space,
and let $f:V\to W$ be a surjective linear map. Let $V'\leqslant V$
be a subspace of $V$, and let $W'=f(V')$ be the corresponding
subspace of $W$. Endow $W$ with the pushforward of the diffeology of
$V$ by the map $f$; then the corresponding subset diffeology on $W'$
is in general coarser than the pushforward of the subset diffeology
on $V'$ by the map $f|_{V'}$. If $\mbox{Ker}(f)$ splits off as a
smooth direct summand then the two diffeologies coincide.
\end{lemma}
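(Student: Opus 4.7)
The plan is to handle the two claims in sequence. The general ``coarser'' inclusion is formal: any plot $q'$ of the pushforward-of-subset diffeology on $W'$ factors locally as $f|_{V'}\circ p'$ with $p'$ a plot of the subset diffeology on $V'$, i.e., a plot of $V$ whose image lies in $V'$. Then $f\circ p' = q'$ is locally a plot of the pushforward diffeology on $W$, with image in $W'$, hence a plot of the subset diffeology on $W'$. The sheaf condition globalizes this, showing that every plot of the pushforward-of-subset diffeology is a plot of the subset-of-pushforward diffeology, so the latter is indeed coarser.

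For the equality under the smooth splitting $V = \mbox{Ker}(f)\oplus U_c$, I plan to use the associated smooth projection $\pi_{U_c}: V \to U_c$ to construct explicit smooth lifts. The first preparatory step is to check that $f|_{U_c}: U_c \to W$ is a diffeomorphism onto $W$ with the pushforward diffeology: it is a linear bijection, smooth as a restriction, and its inverse is smooth because any plot $q = f\circ p$ of the pushforward satisfies $(f|_{U_c})^{-1}\circ q = \pi_{U_c}\circ p$, a plot of $U_c$ by smoothness of $\pi_{U_c}$. Then, given a plot $q: U_p \to W'$ of the subset-of-pushforward diffeology, written locally as $q = f\circ p$, the map $\tilde{q} := \pi_{U_c}\circ p$ is a smooth plot of $U_c$ with $f\circ \tilde{q} = q$ and image contained in $U_c \cap f^{-1}(W') = \pi_{U_c}(V')$.

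The final step is to modify $\tilde{q}$ into a plot actually taking values in $V'$. Using $f^{-1}(W') = V' + \mbox{Ker}(f)$ together with the observation that the smooth splitting of $V$ restricts to a smooth splitting of $V' + \mbox{Ker}(f)$ as $\mbox{Ker}(f) \oplus \pi_{U_c}(V')$, one seeks a smooth $\mbox{Ker}(f)$-valued correction $k$ such that $p' := \tilde{q} + k$ takes values in $V'$; then $p'$ is a plot of the subset diffeology on $V'$, and $f|_{V'}\circ p' = q$ displays $q$ as a plot of the pushforward-of-subset diffeology. The main obstacle is precisely the smooth production of $k$: a pointwise correction exists algebraically, and the smooth direct-summand hypothesis is what transports the smoothness through $V'$; without it, the pointwise correction may fail to assemble into a smooth map, which is exactly why the subset diffeology is only coarser in general.
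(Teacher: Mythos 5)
The first half of your argument, and the reduction via the smooth projection $\pi_{U_c}$ to the plot $\tilde q=\pi_{U_c}\circ p$ of $U_c$ with $f\circ\tilde q=q$ and range in $U_c\cap f^{-1}(W')=\pi_{U_c}(V')$, are correct; up to notation this is the same decomposition of the lift that the paper uses. The gap is your last step, which you never actually carry out: you only say that ``one seeks'' a smooth $\mbox{Ker}(f)$-valued correction $k$ with $\tilde q+k$ valued in $V'$, and then assert that the smooth-splitting hypothesis ``transports the smoothness through $V'$''. That is exactly what would have to be proved, and it is not a formality: pointwise, $k(u)$ is (modulo $V'\cap\mbox{Ker}(f)$) the image of $\tilde q(u)$ under the linear map $\pi_{U_c}(V')\to\mbox{Ker}(f)$ sending $\pi_{U_c}(v')\mapsto v'-\pi_{U_c}(v')$, and nothing in the hypothesis makes this map smooth for the relevant subset diffeologies.

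In fact the step cannot be completed from the stated hypothesis alone. Take $V=\matR^3$ with the product diffeology of the vector space diffeology $\mathcal{D}_2$ on the $(x,y)$-plane generated by $u\mapsto(|u|,0)$ and the standard diffeology on the $z$-axis; let $f(x,y,z)=(x,y)$, so that $\mbox{Ker}(f)$ is the $z$-axis and splits off smoothly; let $V'$ be the line spanned by $(1,0,1)$, so $W'$ is the $x$-axis. Every plot of $V$ with range in $V'$ has the form $u\mapsto(a(u),0,a(u))$ with $a$ an ordinary smooth function (its third coordinate is a plot of the standard $\matR$), so the pushforward of the subset diffeology on $V'$ is the standard diffeology of the line $W'$; on the other hand, the subset diffeology on $W'\subset(W,f_*\mathcal{D}_V)$ contains the generating plot $u\mapsto(|u|,0)$. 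For this plot your $\tilde q$ is $u\mapsto(|u|,0,0)$ and the required correction would be $k(u)=(0,0,|u|)$, which is not a plot of $\mbox{Ker}(f)$; no smooth correction exists. So the smooth splitting of $\mbox{Ker}(f)$ by itself does not suffice: one needs in addition that $V'$ be adapted to the splitting, for instance $\pi_{U_c}(V')\subseteq V'$ (equivalently $V'=(V'\cap U_c)\oplus(V'\cap\mbox{Ker}(f))$), in which case your $\tilde q$ already takes values in $V'$ and no correction is needed at all. As written, your proposal identifies the right reduction but leaves the decisive step unproved, and that step is precisely where the argument (and, absent such an adaptedness assumption, the statement itself) breaks down; it is also the point that the paper's own proof passes over when it claims that the $V_1$-component of the lifted plot automatically has range in $V'$.
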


\begin{proof}
Let $\calD_s$ be the subset diffeology on $W'$, and let
$\calD_{psh}$ be the pushforward diffeology on it. By its
definition, $\calD_{psh}$ consists of plots of local form
$f|_{V'}\circ p'=f\circ p'$, where $p'$ is a plot for the subset
diffeology on $V'$, \emph{i.e.} it is a plot of $V$ with range in
$V'$. Thus, $f\circ p'$ is a plot for the pushforward diffeology on
$W$, which by assumption takes values in $W'$ only; therefore it is
automatically a plot for the subset diffeology on $W$. We conclude
that $\calD_{psh}\subseteq\calD_s$.

Now let $q:U\to W'$ be a plot of $\calD_s$; this means that it is a
plot of $W$ of taking values in $W'$ only. And so it is a plot for
the pushforward diffeology on $W$, \emph{i.e.} that locally (on each
open subset $U'\subset U$ small enough) it has form $q=f\circ p$ for
some plot $p:U'\to V$. If the range of $p$ is contained in $V'$ then
$p$ is a plot for the subset diffeology on $V'$, and we can conclude
that $q$ is a plot for the pushforward of the subset diffeology on
$V'$ by $f|_{V'}$. So assume that the range of $p$ is not contained
in $V'$.

If $\mbox{Ker}(f)$ splits off as a smooth direct summand,
$V=V_1\oplus\mbox{Ker}(f)$ then (on a small enough subset $U'\subset
U$) the plot $p$ can be represented as $p=p_1\oplus p_{ker}$ where
both $p_1$ and $p_{ker}$ are plots of $V$, with ranges contained in
$V_1$ and $\mbox{Ker}(f)$ respectively. Observe furthermore that the
equality $q=f\circ p$ implies that $p(U')\subset V'+\mbox{Ker}(f)$;
and this implies, in turn, that the range of $p_1$ is actually
contained in $V'$. Thus, we actually have $q=f\circ p_1$, where
$p_1$ is a plot for the subset diffeology on $V'$, so $q$ is a plot
of $\calD_{psh}$.
\end{proof}

\begin{rem}
That the kernel of $f$ splits off as a smooth direct summand is a
sufficient condition, but not a necessary one.
\end{rem}

We now consider the implications of the above lemma on the behavior
of smooth decompositions under projections (essentially, under
taking quotients).

\begin{lemma}\label{when:smooth:deco:descends:lem}
Let $V$ be a diffeological vector space, let $W$ be a vector space,
and let $f:V\to W$ be a surjective linear map; endow $W$ with the
pushforward diffeology. Let $V=V_0\oplus V_1$ be a smooth
decomposition of $V$ into a direct sum, and suppose that $f(V_0)\cap
f(V_1)=\{0\}$. Then the decomposition $W=W_0\oplus W_1$, where
$W_i=f(V_i)$ for $i=0,1$, is smooth.
\end{lemma}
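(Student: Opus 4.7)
The plan is to establish the equality of the two diffeologies on $W$ by showing both inclusions. One direction --- that the direct sum diffeology relative to the subset diffeologies on $W_0$ and $W_1$ is \emph{a priori} finer than the pushforward diffeology on $W$ --- holds for any candidate decomposition and is remarked upon just after the definition of a smooth decomposition; so the essential task is to show that every plot for the pushforward diffeology on $W$ is also a plot for the direct sum diffeology on $W_0\oplus W_1$.

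Thus, let $q:U\to W$ be a plot of the pushforward diffeology; after shrinking $U$ if necessary, we may write $q=f\circ p$ for some plot $p:U\to V$. Since the decomposition $V=V_0\oplus V_1$ is smooth, the diffeology of $V$ coincides with the product diffeology on $V_0\oplus V_1$, so $p$ splits as $p=p_0+p_1$, where each $p_i:U\to V$ is a plot of $V$ with range contained in $V_i$ (equivalently, a plot of $V_i$ for the subset diffeology). Define $q_i:=f\circ p_i:U\to W_i$. By linearity $q=q_0+q_1$, and the hypothesis $f(V_0)\cap f(V_1)=\{0\}$ guarantees that $W=W_0\oplus W_1$ is a genuine vector-space direct sum, so this is the unique decomposition of $q(u)$ in $W_0\oplus W_1$.

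It remains to check that each $q_i$ is a plot of $W_i$ for the subset diffeology. But $q_i=f\circ p_i$ is by construction a plot of the pushforward diffeology on $W$ whose range lies in $W_i$, and therefore it is automatically a plot of the subset diffeology on $W_i$. Hence the pair $(q_0,q_1)$ is a plot of the product diffeology on $W_0\oplus W_1$, which means that $q$ is a plot of the direct sum diffeology, as required. I do not anticipate any serious obstacle: the argument is essentially a diagram chase through the definitions of the pushforward, subset, and product diffeologies. The two hypotheses each play a clearly identifiable role --- the smoothness of $V=V_0\oplus V_1$ provides the splitting of the lift $p$, while the transversality condition $f(V_0)\cap f(V_1)=\{0\}$ ensures that this splitting descends unambiguously to $W$ --- and neither the previous lemma on pushforwards versus subset diffeologies nor any hypothesis on $\mbox{Ker}(f)$ is actually needed.
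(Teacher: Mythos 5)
Your argument is correct and is essentially the paper's own proof: lift a plot $q$ of $W$ locally to a plot $p$ of $V$, split $p=p_0+p_1$ using the smoothness of $V=V_0\oplus V_1$, and observe that each $q_i=f\circ p_i$ is a plot of $W$ with range in $W_i$, hence a plot of the subset diffeology on $W_i$. Your write-up is in fact slightly more explicit than the paper's, in that you spell out where the hypothesis $f(V_0)\cap f(V_1)=\{0\}$ enters (uniqueness of the decomposition in $W$) and correctly note that neither the preceding lemma nor any condition on $\mathrm{Ker}(f)$ is needed.
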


\begin{proof}
The decomposition $V=V_0\oplus V_1$ being smooth means that every
plot $p$ of $V$ locally has form $p_0+p_1$ where $p_i$ takes values
in $V_i$ for $i=0,1$ (which is equivalent to saying that $p_i$ is a
plot for the subset diffeology on $V_i$). Let us now consider an
arbitrary plot $q$ of $W$. By definition of the pushforward
diffeology, locally it has form $f\circ p$ for some plot $p$ of $V$
(to account for the constant plots, recall that $f$ is surjective).
By linearity of $f$ and the above observation on the structure of
plots of $V$, we obtain that $q=f\circ p_0+f\circ p_1$, where
$q_i:=f\circ p_i$ takes values in $W_i$ for $i=0,1$ and, $p_i$ being
a plot of $V$, is also a plot of $W$. The plot $q$ being arbitrary,
we obtain that $W=W_0\oplus W_1$ is a smooth decomposition.
\end{proof}

\subsection{The grading of a diffeological Clifford algebra}

Our aim here is to show that the standard grading of a diffeological
Clifford algebra is in fact a smooth decomposition of it as a
diffeological vector space. It obviously corresponds to the standard
grading of a usual Clifford algebra:
$$\cl(V,q)=\cl(V,q)^0\oplus\cl(V,q)^1.$$

\begin{prop}
The grading of any diffeological Clifford algebra $\cl(V,q)$ is a
smooth direct sum decomposition.
\end{prop}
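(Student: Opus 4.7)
My approach is to lift the $\matZ_2$-grading to the tensor algebra, observe that it is a smooth decomposition there essentially by construction, and then push the result down via Lemma~\ref{when:smooth:deco:descends:lem}.

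First I would verify that the tensorial version of the grading, $T(V) = T(V)^0 \oplus T(V)^1$ with $T(V)^i = \bigoplus_{k \equiv i \,(\text{mod } 2)} V^{\otimes k}$, is a smooth decomposition with respect to $\calD_\otimes$. This is essentially tautological, since $\calD_\otimes$ is the sum diffeology of the family $\{V^{\otimes k}\}_k$: locally any plot of $T(V)$ is the sum of its components in each $V^{\otimes k}$ (each of which is a plot by smoothness of the projections $\pi_k$), and regrouping those components by the parity of $k$ expresses the plot as the sum of a plot of $T(V)^0$ with a plot of $T(V)^1$, each taken in its own sum diffeology (which, restricted to $T(V)^i$, coincides with the subset diffeology inherited from $T(V)$).

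Next I would check the transversality hypothesis $\pi(T(V)^0) \cap \pi(T(V)^1) = \{0\}$, where $\pi : T(V) \to \cl(V,q)$ is the canonical projection. This is the classical algebraic input behind the well-definedness of the grading on $\cl(V,q)$: all generators $v\otimes w + w\otimes v + 4q(v,w)$ of $I(V)$ sit in $T(V)^0$, so the two-sided ideal $I(V)$ is $\matZ_2$-homogeneous and decomposes as $I(V) = I(V)^0 \oplus I(V)^1$ with $I(V)^i = I(V) \cap T(V)^i$. Consequently, if $\pi(a) = \pi(b)$ with $a \in T(V)^0$ and $b \in T(V)^1$, then $a - b \in I(V)$, and homogeneity of the ideal forces $a \in I(V)^0$ and $b \in I(V)^1$, whence $\pi(a) = \pi(b) = 0$.

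Finally I would apply Lemma~\ref{when:smooth:deco:descends:lem} to the surjective linear map $\pi$, taking $V_0 = T(V)^0$ and $V_1 = T(V)^1$, so that $W_i = \pi(T(V)^i) = \cl(V,q)^i$. Since $\cl(V,q)$ carries precisely the pushforward of $\calD_\otimes$ by $\pi$ (this is how its diffeology was defined), and the two hypotheses of the lemma have been checked above, its conclusion is exactly that $\cl(V,q) = \cl(V,q)^0 \oplus \cl(V,q)^1$ is a smooth direct sum decomposition. The only delicate point is the first step, where one must keep track of the precise meaning of the sum diffeology when regrouping summands by parity, so as to produce genuine plots for the subset diffeologies on $T(V)^0$ and $T(V)^1$; once that is in place, the rest is purely formal and reduces to an invocation of the already-proven descent lemma.
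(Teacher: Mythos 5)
Your proposal is correct and follows essentially the same route as the paper: show the even/odd decomposition of $T(V)$ is smooth (immediate from the sum diffeology defining $\calD_{\otimes}$), note that $\pi$ carries it onto the grading of $\cl(V,q)$, and invoke Lemma~\ref{when:smooth:deco:descends:lem}. The only difference is that you spell out the transversality hypothesis $\pi(T(V)^0)\cap\pi(T(V)^1)=\{0\}$ via homogeneity of the ideal $I(V)$, which the paper subsumes under ``the standard reasoning''; this is a welcome bit of extra care, not a divergence.
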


\begin{proof}
What we have to show is that the direct sum (in the sense of vector
spaces) diffeology on $\cl(V,q)^0\oplus\cl(V,q)^1$ relative to the
subset diffeologies on $\cl(V,q)^0$ and $\cl(V,q)^1$ coincides with
the diffeology on $\cl(V,q)$ (\emph{a priori} the direct sum
diffeology is finer). It suffices to show that each plot $p$ of
$\cl(V,q)$ locally writes as a sum $p=p_0+p_1$, where each $p_i$ is
a plot of the subset diffeology on $\cl(V,q)^i$ for $i=0,1$.

Recall that each plot of $\cl(V,q)$ is locally a composition of a
plot of $T(V)$ with its universal projection on $\cl(V,q)$. Let
$\pi:T(V)\to\cl(V,q)$ be this projection, let
$T^0(V)=\bigoplus_rV^{\otimes 2r}$ be the subspace of tensors of
even degree, and let $T^1(V)=\bigoplus_rV^{\otimes(2r+1)}$ be the
subspace of tensors of odd degree. This decomposition is smooth by
the definition of the diffeology on $T(V)$. Furthermore, by the
standard reasoning, $\pi(T^0(V))=\cl(V,q)^0$ and
$\pi(T^1(V))=\cl(V,q)^1$. Then by Lemma
\ref{when:smooth:deco:descends:lem}, we obtain that
$\cl(V,q)=\cl(V,q)^0\oplus\cl(V,q)^1$ is a smooth decomposition for
the pushforward of the diffeology of $T(V)$ by the map $\pi$, that
is, the diffeology of $\cl(V,q)$ as defined.
\end{proof}

\subsection{The filtration of a diffeological Clifford algebra}

Let us now consider the standard filtration of $\cl(V,q)$ by
$\cl^k(V,q)/\cl^{k-1}(V,q)$, with $\cl^k(V,q)$ defined, as usual, as
the image of the restriction $\pi^k$ to
$T^k(V):=\bigoplus_{r=0}^kV^{\otimes r}$ of the natural projection
$\pi:T(V)\to\cl(V,q)$.

\begin{lemma}
The subset diffeology on $\cl^k(V,q)$ relative to its inclusion
$\cl^k(V,q)\subset\cl(V,q)$ coincides with the pushforward of the
(subset) diffeology on $T^k(V)\subset T(V)$ by the map $\pi^k$.
\end{lemma}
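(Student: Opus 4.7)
The two diffeologies on $\cl^k(V,q)$ must be compared in both directions, and one direction is immediate: every plot $p\colon U\to T^k(V)$ for the subset diffeology on $T^k(V)$ is in particular a plot of $T(V)$ whose image lies in $T^k(V)$, so $\pi^k\circ p=\pi\circ p$ is a plot of $\cl(V,q)$ taking values in $\cl^k(V,q)=\pi^k(T^k(V))$, whence the pushforward diffeology is contained in the subset diffeology.

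For the reverse inclusion, the plan is to construct a smooth linear map $P\colon T(V)\to T(V)$ satisfying $\pi\circ P=\pi$ and $P(T^k(V)+I(V))\subseteq T^k(V)$. Given such a $P$ and a plot $q\colon U\to\cl^k(V,q)$ for the subset diffeology, I would locally write $q=\pi\circ p$ for some plot $p\colon U'\to T(V)$; the hypothesis that $q(U')\subseteq\cl^k(V,q)=\pi(T^k(V))$ forces $p(U')\subseteq T^k(V)+I(V)$, and then $p':=P\circ p$ would be a plot of $T(V)$ taking values in $T^k(V)$, hence a plot of the subset diffeology on $T^k(V)$, with $\pi^k\circ p'=\pi\circ p'=\pi\circ p=q$; this would exhibit $q$ as a plot of the pushforward diffeology.

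To build $P$, I would fix a basis $e_1,\ldots,e_n$ of $V$ and let $C\subset T(V)$ be the span of the ordered monomials $e_{i_1}\otimes\cdots\otimes e_{i_r}$ with $i_1<\cdots<i_r$ and $0\leqslant r\leqslant n$. The standard isomorphism $\cl(V,q)\cong\bigwedge^*V$ shows that $\pi|_C$ is a linear bijection onto $\cl(V,q)$ mapping $C\cap T^k(V)$ onto $\cl^k(V,q)$; equivalently, $T(V)=C\oplus I(V)$ and $T^k(V)=(C\cap T^k(V))\oplus(I(V)\cap T^k(V))$ as vector spaces. Taking $P$ to be the projection onto $C$ along $I(V)$, both required properties follow at once from these decompositions: the first from $P-\mathrm{id}\in\mathrm{End}(T(V))$ mapping into $I(V)=\ker\pi$, the second because any $t\in T^k(V)$ splits uniquely into a piece in $C\cap T^k(V)$ and a piece in $I(V)\cap T^k(V)$, so $P(t)$ lies in $C\cap T^k(V)\subseteq T^k(V)$.

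The main obstacle is verifying that $P$ is smooth. By the sum diffeology on $T(V)=\bigoplus_rV^{\otimes r}$ this reduces to showing each $P|_{V^{\otimes r}}\colon V^{\otimes r}\to T(V)$ is smooth. Algorithmically, $P|_{V^{\otimes r}}$ is computed by iterating the Clifford relation $e_j\otimes e_i\equiv -e_i\otimes e_j-4q(e_i,e_j)\pmod{I(V)}$ to reorder a given tensor, accumulating along the way scalar contractions against $q$. On a plot locally of the pure tensor form $u\mapsto\phi_1(u)\otimes\cdots\otimes\phi_r(u)$ with $\phi_i$ plots of $V$, the reduction expresses $P(\phi_1(u)\otimes\cdots\otimes\phi_r(u))$ as a finite sum in which each summand is a tensor product of a sub-family of the $\phi_i(u)$'s multiplied by products of values $q(\phi_{i_1}(u),\phi_{i_2}(u))$. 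Each such summand is a plot of the appropriate $V^{\otimes s}$ rescaled by a smooth function (since $q$ is smooth, so $u\mapsto q(\phi_i(u),\phi_j(u))$ is smooth), and their sum is a plot of $T(V)$; this will close the argument.
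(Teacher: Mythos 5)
Your overall strategy is sound, and you have put your finger on the genuinely delicate point: for a plot $q$ of the subset diffeology on $\cl^k(V,q)$, the local lift $p$ to $T(V)$ is only forced to take values in $T^k(V)+I(V)$, not in $T^k(V)$, so some correction of $p$ is needed (the paper's own proof asserts that the range of $\pi\circ p$ lies in $\cl^k(V,q)$ only if the range of $p$ lies in $T^k(V)$, which is false as stated, so the step you are supplying is not superfluous). The easy inclusion and the reduction to a smooth linear $P$ with $\pi\circ P=\pi$ and $P(T^k(V)+I(V))\subseteq T^k(V)$ are fine. The gap is the smoothness of your particular $P$. The projection onto the span $C$ of ordered monomials in a fixed basis is computed by expanding a plot in the coordinates of that basis, and coordinate functionals on a non-standard diffeological vector space are in general not smooth. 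The formula you invoke --- a sum of tensor products of sub-families of the $\phi_i(u)$ multiplied by $q$-values --- cannot be what $P$ computes: such an expression is basis-independent and in general does not even lie in $C$, whereas $P$ by definition takes values in $C$ and is basis-dependent (already $P(e_2\otimes e_1)=-e_1\otimes e_2-4q(e_1,e_2)$ is not a combination of $e_2\otimes e_1$, $e_1\otimes e_2$ and scalars with universal coefficients times $q$-values).

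In fact your $P$ need not be smooth. Take $V=\matR^2$ with the vector space diffeology generated by $x\mapsto|x|(e_1+e_2)$, and the pseudo-metric $q(v,w)=(v_1-v_2)(w_1-w_2)$ (smooth, of rank $1=\dim V^*$). The map $u\mapsto e_1\otimes\bigl(|u|(e_1+e_2)\bigr)=|u|\,(e_1\otimes e_1+e_1\otimes e_2)$ is a plot of $V^{\otimes 2}$; since $e_1\otimes e_1\equiv-2q(e_1,e_1)=-2$ modulo $I(V)$, its image under $P$ (for the basis $e_1,e_2$) is $-2|u|\cdot 1+|u|\,e_1\otimes e_2$, whose degree-zero component $-2|u|$ is not a plot of the standard $\matR=V^{\otimes 0}$; as the diffeology of $T(V)$ is the direct sum diffeology, $P$ composed with this plot is not a plot of $T(V)$, so $P$ is not smooth. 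The argument can be repaired by replacing $C$ with the basis-free complement $\bigoplus_r\mathcal{A}_r(V)=\mbox{Alt}(T(V))$ of $I(V)$ (the Chevalley identification): the projection onto it along $I(V)$ sends $\phi_1(u)\otimes\ldots\otimes\phi_r(u)$ to a Wick-type sum whose summands are products of values $q(\phi_a(u),\phi_b(u))$ times $\mbox{Alt}\bigl(\phi_{c_1}(u)\otimes\ldots\otimes\phi_{c_s}(u)\bigr)$ with $s\leqslant r$; each ingredient is smooth (smoothness of $\mbox{Alt}$ and of $q$), the filtration is preserved, and $\pi\circ P=\pi$ still holds, after which your argument closes exactly as you intend.
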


\begin{proof}
This is a direct consequence of definitions involved. The subset
diffeology on $\cl^k(V,q)$ consists of all plots of $\cl(V,q)$ with
range contained in $\cl^k(V,q)$. The diffeology of $\cl(V,q)$ is the
pushforward of the diffeology of $T(V)$ by the map $\pi$, so any
plot of it has form $\pi\circ p$ for a plot $p$ of $T(V)$. By the
definition of the map $\pi$, the range of $\pi\circ p$ is contained
in $\cl^k(V,q)$ if and only if the range of $p$ is contained in
$T^k(V)$, on which we obviously have $\pi=\pi^k$, whence the
conclusion.
\end{proof}

This lemma has the following (expected) implication.

\begin{prop}
The usual isomorphism $\cl^k(V,q)/\cl^{k-1}(V,q)\to\bigwedge^kV$ is
a smooth map with smooth inverse (for the quotient diffeology on
$\cl^k(V,q)/\cl^{k-1}(V,q)$).
\end{prop}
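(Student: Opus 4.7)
The plan is to identify both the source and the target diffeologies as pushforwards from the single space $V^{\otimes k}$ along two surjective linear maps with identical (classically known) kernels, and then read off smoothness in both directions from one chain of plot manipulations.

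First, I would rewrite the quotient diffeology on $\cl^k(V,q)/\cl^{k-1}(V,q)$ as a pushforward from $T^k(V)$. By definition it is the pushforward of the subset diffeology of $\cl^k(V,q)$ along the canonical projection; by the preceding lemma, that subset diffeology is itself the pushforward of the diffeology of $T^k(V)$ along $\pi^k$. Composing pushforwards, the quotient diffeology on $\cl^k(V,q)/\cl^{k-1}(V,q)$ is the pushforward of the diffeology of $T^k(V)$ along the composite $\tilde\pi^k\colon T^k(V)\to\cl^k(V,q)/\cl^{k-1}(V,q)$.

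Second, I would collapse this to a pushforward from $V^{\otimes k}$ alone. The direct sum decomposition $T^k(V)=T^{k-1}(V)\oplus V^{\otimes k}$ is a smooth decomposition because $\calD_\otimes$ on $T(V)$ is, by construction, the sum diffeology of the family $\{V^{\otimes r}\}_r$. Since $T^{k-1}(V)\subseteq\ker(\tilde\pi^k)$, Lemma \ref{when:smooth:deco:descends:lem} applies with $V_0=T^{k-1}(V)$ and $V_1=V^{\otimes k}$, and inspecting its proof shows that every plot of $\cl^k(V,q)/\cl^{k-1}(V,q)$ locally has the form $\phi\circ p_1$ for some plot $p_1$ of $V^{\otimes k}$, where $\phi:=\tilde\pi^k|_{V^{\otimes k}}$. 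Thus the quotient diffeology coincides with the pushforward of the tensor-product diffeology of $V^{\otimes k}$ along $\phi$.

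Third, I would recall the classical algebraic fact that $\phi$ and the antisymmetrization $\mathrm{Alt}\colon V^{\otimes k}\to\bigwedge^kV$ are both surjective linear maps with the same kernel (modulo $\cl^{k-1}$ one has $v\cdot w=-w\cdot v$, so $\phi$ descends through all antisymmetrization relations), so they induce a unique linear isomorphism $\Phi\colon\bigwedge^kV\to\cl^k(V,q)/\cl^{k-1}(V,q)$ with $\phi=\Phi\circ\mathrm{Alt}$; this is the classical isomorphism whose smoothness we want. By the lemma on $\mathrm{Alt}$ from Section 1, the diffeology of $\bigwedge^kV$ is also the pushforward of that of $V^{\otimes k}$ along $\mathrm{Alt}$. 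Both smoothness statements now drop out symmetrically: a plot of $\cl^k(V,q)/\cl^{k-1}(V,q)$ is locally $\phi\circ p_1=\Phi\circ\mathrm{Alt}\circ p_1$, so $\Phi^{-1}$ applied to it is $\mathrm{Alt}\circ p_1$, a plot of $\bigwedge^kV$; conversely a plot of $\bigwedge^kV$ is locally $\mathrm{Alt}\circ p_1$, and $\Phi$ applied to it is $\phi\circ p_1$, a plot of the quotient. The main obstacle, as I see it, is the second step: one has to justify carefully the reduction from a pushforward along $\tilde\pi^k$ on all of $T^k(V)$ to a pushforward along $\phi$ on $V^{\otimes k}$ alone, and this is exactly the situation Lemma \ref{when:smooth:deco:descends:lem} was set up to handle; once that bridge is in place, everything else is formal.
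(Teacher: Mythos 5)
Your argument is correct, and it follows exactly the route the paper intends: the paper states this proposition without proof, as an ``(expected) implication'' of the preceding lemma identifying the subset diffeology on $\cl^k(V,q)$ with the pushforward of the diffeology of $T^k(V)$ by $\pi^k$, and your write-up simply supplies the missing details (composing pushforwards, splitting off $V^{\otimes k}$ via the smooth decomposition argument of Lemma \ref{when:smooth:deco:descends:lem}, and invoking the Alt-pushforward lemma of Section 1). The only stylistic caveat is that you correctly note you need the \emph{proof} of Lemma \ref{when:smooth:deco:descends:lem} rather than its statement, since with $V_0=T^{k-1}(V)\subseteq\ker(\tilde\pi^k)$ the stated conclusion is vacuous; the one-line plot decomposition you extract from it is exactly what is needed.
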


\section{Diffeological vector pseudo-bundles and pseudo-metrics}

As we have just seen, an individual diffeological Clifford algebra,
or Clifford module, represents simply a particular instance of the
standard construction; what is more interesting is considering a
collection of them, a diffeological counterpart of a vector bundle
of Clifford algebras/modules. However, unless we are dealing with a
standard case, it is frequently not a true bundle; instead, it is a
diffeological counterpart of such, a more complicated object that
was first introduced in \cite{iglFibre} (see also
\cite{iglesiasBook}), and then considered, for example, in
\cite{vincent} and then in \cite{CWtangent}. The terminology was
different in all these sources (it was just \emph{diffeological
fibre bundle} for \cite{iglFibre}, \emph{regular vector bundle} in
\cite{vincent}, and \emph{diffeological vector space over $X$} in
\cite{CWtangent}); we call it a \emph{diffeological vector
pseudo-bundle}, finding this term more convenient for stressing its
difference from usual bundles (as well as avoiding confusion with
individual vector spaces).

\subsection{Diffeological vector pseudo-bundles}

The main difference between a diffeological vector pseudo-bundle and
a usual smooth vector bundle is the absence of local
trivializations. Indeed, diffeological pseudo-bundles are frequently
not locally trivial, or they are not bundles at all, in the sense
that the fibre may easily not be the same over different points. An
easy example of such is the wedge of a line and a plane,
\emph{i.e.}, of $\matR$ and $\matR^2$, at the origin, which fibers
over the wedge of two copies of $\matR$ (one of which is the
covering $\matR$ and the other is the $x$-axis of the $\matR^2$).
The fibres are of two sorts; in one case, they are points
(considered as trivial vector spaces, and in the other case they are
lines of form $\{x\}\times\matR$, with the obvious vector space
structure on the second factor). It is easy to endow the two spaces,
$\matR\vee_0\matR^2$ and $\matR\vee_0\matR$, with diffeologies such
that the above-described projection
$\matR\vee_0\matR^2\to\matR\vee_0\matR$ of one onto the other be
smooth, and the subset diffeology on the pre-image of each point be
a vector space diffeology. This means that we obtain what is called
a diffeological vector pseudo-bundle, which below we formally
define.

\paragraph{The formal definition} Let $V$ and $X$ be two diffeological
spaces, and let $\pi:V\to X$ be a smooth surjective map.

\begin{defn}
The map $\pi:V\to X$ is called a \textbf{diffeological vector
pseudo-bundle}, if for every $x\in X$ the pre-image $\pi^{-1}(x)$ is
endowed with the structure of a vector space such that the following
three conditions are satisfied:
\begin{enumerate}
\item the map $V\times_X V\to V$ induced by the addition within each
fibre is smooth for the subset diffeology on $V\times_X V$ relative
to its inclusion $V\times_X V\subset V\times V$ and the product
diffeology on the latter;

\item the map $\matR\times V\to V$ induced by the fibrewise scalar
multiplication is smooth for the product diffeology on $\matR\times
V$ relative to the standard diffeology on $\matR$;

\item the zero section is smooth as a map $X\to V$.
\end{enumerate}
\end{defn}

We will also use the term \emph{pseudo-bundle} to refer to just the
total space $V$. As already mentioned, the requirement of the
existence of an atlas of local trivializations is absent; in
particular, having fibres of different dimensions is explicitly
admitted. An example of this has already been described in the
beginning of this section, and we add some more below. Other
instances appear in the literature, also arising from independent
contexts, such as the internal tangent bundle to the union of
coordinate axes in $\matR^2$ constructed in \cite{CWtangent}.

\paragraph{Operations on vector pseudo-bundles} The usual operations on
smooth vector bundles (direct sum, tensor product, and so on) can
still be defined for diffeological vector pseudo-bundles, although
due to the absence of local trivializations this is done with a
different approach. It appeared first in \cite{vincent}, Chapter 5
(some details are added in \cite{pseudobundles}).
\begin{itemize}
\item \textbf{Sub-bundles and quotient pseudo-bundles}. Let $\pi:V\to
X$ be a finite-dimensional diffeological vector pseudo-bundle, and
let $W\subset V$ be such that for every $x\in X$ the intersection
$W\cap\pi^{-1}(x)$ is a vector subspace of $\pi^{-1}(x)$. Then
$\pi|_W:W\to X$, where $W$ is endowed with the subset diffeology
relative to $V$, is also a diffeological vector pseudo-bundle,
called a \textbf{sub-bundle of $V$} (in general, it is also a
pseudo-bundle). \emph{Vice versa}, any collection
$$\bigcup_{x\in X}W_x\,\,\,\mbox{ such that }\,\,\,W_x\leqslant\pi^{-1}(x)$$
of vector subspaces, one for each fibre, is a sub-bundle of $V$ for
the subset diffeology and the restriction of $\pi$ onto it.

Let now $W\subset V$ be a sub-bundle of $\pi:V\to X$. Then $W$
defines an obvious equivalence relation on $V$, by taking the
quotient of each fibre $\pi^{-1}(x)$ over the subspace
$W\cap\pi^{-1}(x)$. The quotient space $V/W$,
$$V/W=\bigcup_{x\in X}\left(\pi^{-1}(x)/(W\cap\pi^{-1}(x))\right),$$
endowed with the quotient diffeology and the obvious projection onto
$X$, becomes a vector pseudo-bundle itself and is called the
\textbf{quotient pseudo-bundle}. Obviously, the subset diffeology on
each $\pi^{-1}(x)/\left(W\cap\pi^{-1}(x)\right)\subset V/W$
coincides with its quotient diffeology relative the subset
diffeology on $\pi^{-1}(x)$.

\item \textbf{Direct sum of two pseudo-bundles}. Let $\pi_1:V_1\to
X$ and $\pi_2:V_2\to X$ be two diffeological vector pseudo-bundles
with the same base space. Their \textbf{direct sum}, as a set, is
$$V_1\oplus V_2:=V_1\times_X
V_2=\{(v_1,v_2)\,|\,\pi_1(v_1)=\pi_2(v_2),v_i\in V_i,i=1,2\}\subset
V_1\times V_2;$$ it is endowed with the obvious factorwise
operations of addition and scalar multiplication and with the subset
diffeology relative to the product diffeology on $V_1\times V_2$, as
well as with the obvious projection $\pi_1\oplus\pi_2$ onto $X$. It
is a matter of technicality then (see, for instance, \cite{vincent})
that $\pi_1\oplus\pi_2:V_1\oplus V_2\to X$ is a diffeological vector
pseudo-bundle with the fibre
$$(\pi_1\oplus\pi_2)^{-1}(x)=\pi_1^{-1}(x)\oplus\pi_2^{-1}(x)$$ (as
\emph{diffeological} vector spaces) for every $x\in X$.

\item \textbf{Tensor product of two pseudo-bundles}. Let, again,
$\pi_1:V_1\to X$ and $\pi_2:V_2\to X$ be two diffeological vector
pseudo-bundles. Their \textbf{tensor product} is defined as the
quotient pseudo-bundle of the direct product product bundle
$V_1\times_X V_2$ over its sub-bundle $W$ which is defined by
requiring each fibre $W_x$ of $W$ to be precisely the kernel of the
universal projection
$\pi_1^{-1}(x)\times\pi_2^{-1}(x)\to\pi_1^{-1}(x)\otimes\pi_2^{-1}(x)$.
By construction, each fibre of this quotient has form
$\pi_1^{-1}(x)\otimes\pi_2^{-1}(x)$.

\item \textbf{The dual pseudo-bundle}. Let $\pi:V\to X$ be a
diffeological vector pseudo-bundle, and let $$V^*=\cup_{x\in
X}(\pi^{-1}(x))^*$$ be the union of the diffeological duals of all
its fibres. Each dual obtains its diffeological vector space
structure as the dual of the diffeological vector space
$\pi^{-1}(x)\subset V$. The set $V^*$ is then endowed with the
obvious projection onto $X$ and with the smallest diffeology
relative to which all fibres recover their original diffeology as
the subset diffeology.

That such a diffeology exists, and the following characterization of
its plots of the dual bundle diffeology, comes from \cite{vincent},
Definition 5.3.1 and Proposition 5.3.2. Let $U$ be a domain of some
$\matR^l$. A map $p:U\to V^*$ is a plot for the dual bundle
diffeology on $V^*$ if and only if for every plot $q:U'\to V$ the
map
$$Y'\ni(u,u')\mapsto p(u)(q(u'))\mbox{ on }
Y'=\{(u,u')\,|\,\pi^*(p(u))=\pi(q(u'))\in X\}\subset U\times U'$$ is
smooth for the subset diffeology of $Y'\subset\matR^{l+\dim(U')}$
and the standard diffeology of $\matR$.

\item \textbf{The pseudo-bundle of linear maps}. Let $\pi_1:V_1\to X$ and $\pi_2:V_2\to X$ be two
finite-dimensional diffeological vector pseudo-bundles over the same
base space $X$. For each $x\in X$ consider the space
$L^{\infty}((V_1)_x,(V_2)_x)$ of smooth linear maps
$(V_1)_x\to(V_2)_x$ endowed with the functional diffeology; this is
a (finite-dimensional) diffeological vector space. The union
$$\mathcal{L}(V_1,V_2):=\bigcup_{x\in X}L^{\infty}((V_1)_x,(V_2)_x),$$
endowed with the obvious projection $\pi_L$ onto $X$, is a
diffeological vector pseudo-bundle, that carries the following
diffeology, called the \textbf{pseudo-bundle functional diffeology}.
It is the smallest diffeology generated by all the maps
$p:U\to\mathcal{L}(V_1,V_2)$, with $U$ being a domain of some
$\matR^m$, that possess the following property: for every plot
$q:\matR^{m'}\supset U'\to V_1$ the map
$$(u,u')\mapsto p(u)(q(u'))\mbox{ defined on }
Y'=\{(u,u')\,|\,\pi_L(p(u))=\pi_1(q(u'))\}\subset U\times U'$$ (and
taking values in $V_2$) is smooth for the subset diffeology of
$Y'\subset\matR^{m+m'}$ and the diffeology of $V_2$.
\end{itemize}

\paragraph{Pseudo-bundle diffeology generated by some plots}
Let $X$ be a diffeological space, and let $\pi:V\to X$ be a
surjective map, where $V$ is a set such that for every $x\in X$ the
pre-image $\pi^{-1}(x)$ carries a vector space structure.

\begin{defn}
Let $\mathcal{A}=\{p_i:U_i\to V\,|\,i\in I\}$ be a collection of
maps such that $\pi\circ p_i$ is a plot of $X$ for all $i\in I$. The
\textbf{pseudo-bundle diffeology on $V$ generated by $\mathcal{A}$}
is the smallest diffeology on $V$ that contains $\mathcal{A}$ and
with respect to which $\pi:V\to X$ becomes a diffeological vector
pseudo-bundle.
\end{defn}

The existence of such a diffeology can be easily inferred from
Proposition 4.16 of \cite{CWtangent}. Note that the notion of the
pseudo-bundle diffeology generated by a set of given plots is
distinct from that of the generated diffeology \emph{proper} and
that of the vector space diffeology generated by a plot.

\paragraph{Examples} Obviously, usual vector bundles satisfy
the definition of a diffeological vector pseudo-bundle (as is
formally established in \cite{pseudometric-pseudobundle}, they
actually serve as instances of a kind of diffeological assembly,
called \emph{gluing}, which is described in the next section). Other
simple examples include all projections
$\matR^k\times\matR^m\to\matR^k$, where $\matR^k$ is endowed with
any diffeology, $\matR^m$ with any vector space diffeology, and the
total space, with the product diffeology. We will also give an
example of a non-trivial non-manifold pseudo-bundle when it comes to
discussing the so-called \emph{pseudo-metrics} (substitutes of
Riemannian metrics) on diffeological vector pseudo-bundles.

\subsection{More complicated pseudo-bundles: diffeological gluing}

Diffeological gluing can be thought of as a partial way to
compensate for the absence of local trivializations in the concept
of a diffeological vector pseudo-bundles, although it is not aimed
to describe all possible pseudo-bundles (and likely does not). The
idea is, just like many usual smooth vector bundles can be seen as
the result of identification, along the transition functions, of
several copies of the product bundle $\matR^{n+k}\to\matR^n$, so
many diffeological pseudo-bundles can be obtained as the result of
diffeological gluing of several trivial (or almost trivial, although
this case we will avoid) pseudo-bundles of form
$\matR^{n_i+k_i}\to\matR^{n_i}$. There are two main differences: one
is that these trivial pseudo-bundles may have varying dimensions
(and/or diffeologies), and the gluing may not be along
diffeomorphisms, nor is it required, for the subsets being
identified, to have non-empty interior. Still, the latter case is a
generalization of the former (the rigorous statement and the proof
can be found in \cite{pseudometric-pseudobundle}).

\subsubsection{Gluing of diffeological spaces and of diffeological
vector pseudo-bundles}

Here we define formally the operation of gluing and the resulting
diffeology, first for individual diffeological spaces and then for
diffeological vector pseudo-bundles (the latter actually amounts to
the former performed twice). The main aspect is that of the choice
of diffeology, since from the topological/set-theoretic point of
view this construction is well-known.

\paragraph{The gluing of two diffeological spaces} Underlying this
operation is the standard definition of gluing of two topological
spaces. Although this can be defined in a more general context, we
limit ourselves to the case of gluing along maps that are invertible
(the assumption that they are smooth is always present). Then, as is
standard, if $X_1,X_2$ are two topological spaces and $f:X_1\supset
Y_1\to Y_2\subset X_2$ is a homeomorphism then the result of gluing
$X_1$ and $X_2$ along $f$ is the space $X=(X_1\sqcup X_2)/\sim$,
where $\sim$ is the equivalence relation given by $x_1\sim x_1$ for
$x_1\in X_1\setminus Y_1$, $x_2\sim x_2$ for $x_2\in X_2\setminus
Y_2$, and $Y_1\ni y_1=f(y_1)\in Y_2$. We will denote the space $X$
thus obtained by $X_1\cup_f X_2$:
$$X_1\cup_f X_2:=(X_1\sqcup X_2)/\sim.$$

\begin{defn}
Let $X_1$ and $X_2$ be diffeological spaces, and let $f:X_1\supseteq
Y_1\to Y_2\subseteq X_2$ be a map smooth for the subset diffeologies
on $Y_1$ and $Y_2$. The \textbf{gluing diffeology} on the space
$X_1\cup_f X_2$ is the quotient diffeology corresponding to the
disjoint union diffeology on $X_1\sqcup X_2$; endowed with this
diffeology, $X_1\cup_f X_2$ is said to be the result of
\textbf{diffeological gluing} of $X_1$ to $X_2$ along $f$.
\end{defn}

\paragraph{The gluing of two diffeological vector pseudo-bundles}
This is a direct sequence of the previous construction, under
appropriate assumptions on the gluing maps. Specifically, let
$$\pi_1:V_1\to X_1\,\,\mbox{ and }\,\,\pi_2:V_2\to X_2$$ be two
(finite-dimensional) diffeological vector pseudo-bundles, let
$$f:X_1\supset Y_1\to Y_2\subset X_2$$ be a diffeomorphism for the
subset diffeologies on $Y_1$ and $Y_2$, and let
$$\tilde{f}:\pi_1^{-1}(Y_1)\to\pi_2^{-1}(Y_2)$$ be a smooth map that
is linear on each fibre and is a lift of $f$, namely, $\pi_1\circ
f=\tilde{f}\circ\pi_2$. This latter condition ensures that between
the diffeological spaces $V_1\cup_{\tilde{f}}V_2$ and $X_1\cup_f
X_2$ there is a well-defined (smooth) map
$$\pi_1\cup_{(\tilde{f},f)}\pi_2:V_1\cup_{\tilde{f}}V_2\to X_1\cup_f
X_2 $$ induced by $\pi_1$ and $\pi_2$, and it is easy to check (see
\cite{pseudobundles}) that $\pi_1\cup_{(\tilde{f},f)}\pi_2$ is again
a diffeological vector pseudo-bundle.

\subsubsection{Gluing and other operations}

Recall that in the case of the usual smooth vector bundles, the
operations on them are defined via their restrictions to the charts
of a given atlas of local trivializations. What happens in the
diffeological context is a procedure that in some sense is an
inverse of this standard strategy: the operations are defined on the
whole pseudo-bundles, and then one wonders what, if any, relation
there is with our diffeological substitute for such atlas, which is
a representation as the result of gluing of some finite number of
trivial diffeological pseudo-bundles.

What we find, under this perspective is that in a number of cases
the two commute. In particular, if we are also given a pseudo-bundle
endowed with a so-called pseudo-metric (see the next section), this
allows us to reduce the constructions of the corresponding
pseudo-bundle of Clifford algebras and that of the exterior algebras
(seen as the pseudo-bundle of Clifford modules; see the last
section), essentially to the case of standard trivial bundles, or at
least diffeologically trivial bundles.

\paragraph{The commutativity problem} The question of whether
the gluing operation on pseudo-bundles commutes with the usual
operations on vector bundles (such as the direct sum, tensor
product, and taking the dual bundle) is therefore a very natural
one. Below we summarize the answer to it (which is respectively yes,
yes, no for the three main operations just listed, see
\cite{pseudobundles} and \cite{pseudometric-pseudobundle} for more
details).

To fix the notation, consider two pairs of pseudo-bundles with their
respective gluings. One pair is $\pi_1:V_1\to X_1$ and $\pi_2:V_2\to
X_2$ that are glued along the maps $f:X_1\supset Y_1\to Y_2\subset
X_2$ (a diffeomorphism for the subset diffeologies on $Y_1$ and
$Y_2$) and $\tilde{f}:\pi_1^{-1}(Y_1)\to\pi_2^{-1}(Y_2)$ (smooth and
linear on each fibre), yielding the pseudo-bundle
$$\pi_1\cup_{(\tilde{f},f)}\pi_2:V_1\cup_{\tilde{f}}V_2\to X_1\cup_f X_2.$$
Let the other pair be $\pi_1':V_1'\to X_1$ and $\pi_2':V_2'\to X_2$;
these are glued along the same map $f$ on the bases and a given lift
$\tilde{f}':(\pi_1')^{-1}(Y_1)\to(\pi_2')^{-1}(Y_2)$ of it, yielding
the pseudo-bundle
$$\pi_1'\cup_{(\tilde{f},f)}\pi_2':V_1'\cup_{\tilde{f}'}V_2'\to
X_1\cup_f X_2.$$ The comparisons to make would be between the
following pseudo-bundles.

\paragraph{Direct sum} Consider the following direct sum
pseudo-bundles:
$$\pi_1\oplus\pi_1':V_1\oplus V_1'\to X_1\mbox{ and }
\pi_2\oplus\pi_2':V_2\oplus V_2'\to X_2.$$ These can be glued along
the maps $f$ and its lift
$$\tilde{f}\oplus\tilde{f}':\pi_1^{-1}(Y_1)\oplus(\pi_1')^{-1}(Y_1)\to
\pi_2^{-1}(Y_2)\oplus(\pi_2')^{-1}(Y_2).$$ It turns out that the
result of this gluing, the pseudo-bundle
$$(\pi_1\oplus\pi_1')\cup_{(\tilde{f}\oplus\tilde{f}',f)}(\pi_2\oplus\pi_2'):
(V_1\oplus V_1')\cup_{\tilde{f}\oplus\tilde{f}'}(V_2\oplus V_2')\to
X_1\cup_f X_2,$$ is canonically diffeomorphic, with diffeomorphism
that is a lift of the identity diffeomorphism of the bases, to the
pseudo-bundle
$$(\pi_1\cup_{(\tilde{f},f)}\pi_2)\oplus(\pi_1'\cup_{(\tilde{f}',f)}\pi_2'):
(V_1\cup_{\tilde{f}}V_2)\oplus(V_1'\cup_{\tilde{f}'}V_2')\to
X_1\cup_f X_2.$$ In other words, \emph{gluing commutes with the
direct sum}.

\paragraph{Tensor product} The analogous question for the tensor product
takes the following form. We state right away that between the two
possible (pre-gluing) tensor products $V_1\otimes V_1'$ and
$V_2\otimes V_2'$ there is the naturally (fibrewise) defined map
$$\tilde{f}\otimes\tilde{f}':\pi_1^{-1}(Y_1)\otimes(\pi_1')^{-1}(Y_1)\to
\pi_2^{-1}(Y_2)\otimes(\pi_2')^{-1}(Y_2),$$ which is smooth, linear
on each fibre, and a lift of $f$. This allows to form the glued
pseudo-bundle
$$(\pi_1\otimes\pi_1')\cup_{(\tilde{f}\otimes\tilde{f}',f)}(\pi_2\otimes\pi_2'):
(V_1\otimes V_1')\cup_{\tilde{f}\otimes\tilde{f}'}(V_2\otimes
V_2')\to X_1\cup_f X_2.$$ The question is whether it is
diffeomorphic to the tensor product pseudo-bundle
$$(\pi_1\cup_{(\tilde{f},f)}\pi_2)\otimes(\pi_1'\cup_{(\tilde{f}',f)}\pi_2'):
(V_1\cup_{\tilde{f}}V_2)\otimes(V_1'\cup_{\tilde{f}'}V_2')\to
X_1\cup_f X_2.$$ Also in this case the answer is positive.

\paragraph{Dual pseudo-bundles} Assume that $f$ is smoothly invertible.
We now wish to compare the following two pseudo-bundles:
$$(\pi_1\cup_{(\tilde{f},f)}\pi_2)^*:(V_1\cup_{\tilde{f}}V_2)^*\to X_1\cup_f X_2\mbox{ and }
\pi_2^*\cup_{(\tilde{f}^*,f^{-1})}\pi_1^*:V_2^*\cup_{\tilde{f}^*}V_1^*\to
X_2\cup_{f^{-1}}X_1.$$ In general, they are not diffeomorphic: since
over the domain of gluing the resulting fibre is always the target
space of the lifted part of the pair of maps that define the gluing,
we must also have, for each $y_1\in Y_1$ (equivalently, for each
$y_2\in Y_2$) that $(\pi_1^{-1}(y_1))^*$ is diffeomorphic, as a
diffeological vector space, to $(\pi_2^{-1}(f(y_1)))^*$.

Note also that, even if the equality
$(\pi_1^{-1}(y_1))^*=(\pi_2^{-1}(f(y_1)))^*$ holds for all $y\in Y$,
it is not sufficient to ensure that
$(\pi_1\cup_{(\tilde{f},f)}\pi_2)^*=\pi_2^*\cup_{(\tilde{f}^*,f^{-1})}\pi_1^*$.
It suffices to think of the open annulus and the open M\"obius
strip, both considered with their usual smooth structure and their
usual fibering over a circle. These smooth structures are a
particular case of a diffeological structure, of course, and the
projection onto the circle (also with its standard diffeology) is a
particular case of a diffeological vector pseudo-bundle for each of
them. Obviously, these two spaces are not even homeomorphic, let
alone diffeomorphic.

Thus, another necessary condition for the gluing-dual commutativity
is that the dual map $\tilde{f}^*$ be a diffeomorphism of its domain
with its image. It is shown in
\cite{exterior-algebras-pseudobundles} that this condition becomes
sufficient under the additional assumption that the factors of
gluing admit the so-called \emph{compatible pseudo-metrics} (see the
next section).

\subsection{Pseudo-metrics on pseudo-bundles}

It is known (see \cite{iglesiasBook}) that a finite-dimensional
diffeological vector space in general does not admit a scalar
product, therefore the straightforward extension of the notion of
the Riemannian metric to diffeological vector pseudo-bundles does
not make much sense. Indeed, such an extension would be a smooth
section $g:X\to V^*\otimes V^*$ of a given finite-dimensional
diffeological vector pseudo-bundle $\pi:V\to X$ with each value
$g(x)$ a smooth scalar product on the fibre; thus, on most
pseudo-bundles it would not exist at all. It is therefore replaced
by the notion of a \textbf{pseudo-metric on a pseudo-bundle}, which
is obtained by replacing scalar products on fibres with symmetric
bilinear forms of maximal rank. Such an object does not always
exists either, but it is applicable to a much wider class of
pseudo-bundles.

\subsubsection{What is a pseudo-metric on a pseudo-bundle?}

We now describe an obvious extension of the notion of a
pseudo-metric from the case of a finite-dimensional diffeological
vector space to that of a diffeological vector pseudo-bundle.

\paragraph{The notion} Let $\pi:V\to X$ be a diffeological vector
pseudo-bundle.

\begin{defn}
A \textbf{pseudo-metric} on the pseudo-bundle $\pi:V\to X$ is any
smooth section $g$ of the pseudo-bundle
$\pi^*\otimes\pi^*:V^*\otimes V^*\to X$ such that for every $x\in X$
the form $g(x)$ is a pseudo-metric on the diffeological vector space
$\pi^{-1}(x)$, that is, it is semi-definite positive symmetric of
rank $\dim((\pi^{-1}(x))^*)$.
\end{defn}

Note that by Theorem 2.3.5 of \cite{vincent} $g(x)$ is indeed a
bilinear map on $\pi^{-1}(x)$, so the definition is well-posed. We
have already announced that it does not, however, always exist (we
give an example below).

\paragraph{Pseudo-bundles that do not admit pseudo-metrics} For the
definition as stated, it is quite easy to find examples of
pseudo-bundles that do not admit any pseudo-metric. Namely, what
happens (as in the case of the example below) is, there is a smooth
section $X\to V^*\otimes V^*$ that satisfies all the conditions to
be a pseudo-metric except one: it does not have the maximal rank
possible everywhere, but only outside of some positive codimension
subset. (It is quite similar to an example that appears in
\cite{pseudobundles}).

\begin{example}\label{no:pseudometric:exists:ex}
Let $V=\matR^2$ and $X=\matR$; let $\pi:V\to X$ be the projection on
the $x$-axis, $\pi(x,y)=x$. Endow $X$ with the standard diffeology,
and endow $V$ with the pseudo-bundle diffeology generated by the
plot $p:\matR^2\to V$ defined by $p(u,v)=(u,u|v|)$.
\end{example}

The following was proven in \cite{pseudometric-pseudobundle}. The
main point is that the rank of the prospective pseudo-metric on $V$
would have to be zero everywhere except over the origin, where it
should be $1$; despite the diffeology being non-standard, it is
impossible to find a smooth section satisfying this condition.

\begin{lemma}
If $\pi:V\to X$ is the pseudo-bundle of Example
\ref{no:pseudometric:exists:ex}, it does not admit a pseudo-metric.
\end{lemma}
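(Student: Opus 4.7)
The plan is to show, first, that any prospective pseudo-metric $g$ would have to vanish away from the origin and be nonzero at the origin, and then that no smooth section with this profile can exist in the tensor-product pseudo-bundle $V^*\otimes V^*$.

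For the first step, I would compute the fibrewise diffeological duals. Over $x\neq 0$, restricting the generating plot $p(u,v)=(u,u|v|)$ to $u\equiv x$ and rescaling by $1/x$ (a smooth fibrewise operation) produces the plot $v\mapsto|v|$ in the diffeology of $\pi^{-1}(x)\cong\matR$; any smooth linear functional $\varphi(y)=cy$ composed with $|v|$ must be smooth, which forces $c=0$ and hence $(\pi^{-1}(x))^*=0$. Over $x=0$, on the other hand, $p$ restricts to the constant $(0,0)$, so $\pi^{-1}(0)$ inherits only the standard diffeology and $(\pi^{-1}(0))^*\cong\matR$. The rank condition in the definition of a pseudo-metric then forces $g(x)=0$ for all $x\neq 0$ and $g(0)$ to be a nonzero element of the one-dimensional fibre of $V^*\otimes V^*$ over $0$.

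For the second step, I would use that a smooth section $g$ is by definition a plot of $V^*\otimes V^*$ and that locally any such plot decomposes as $g=\pi_\otimes\circ\sum_j r_{1,j}\otimes r_{2,j}$ with $r_{i,j}:U\to V^*$ plots of the dual pseudo-bundle sharing a common base projection. Since $\pi^\otimes\circ g=\mathrm{id}_X$, each $r_{i,j}$ is a plot \emph{section} of $V^*$; hence for $g(0)\neq 0$ to hold, at least one $r_{i,j}$ would have to satisfy $r_{i,j}(0)\neq 0$ in the one-dimensional fibre $V^*_0$, while for every $x\in U\setminus\{0\}$ the value $r_{i,j}(x)\in V^*_x=\{0\}$ is forced to be $0$. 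The crux of the proof is thus the claim that no plot section $r:U\to V^*$ defined near $0$ can satisfy $r(0)\neq 0$.

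The main obstacle is proving this claim, which rests on the pseudo-bundle diffeology on $V^*$ (the smallest one such that the subset diffeology on each fibre is its diffeological dual) not admitting such a ``spike'' section. Since $r$ is a section, $r(x)\in V^*_x$ for each $x$; the fibres $V^*_x$ with $x\neq 0$ are each the single point $\{0_x\}$, so the only way to organise $r$ near $0$ into a plot of this smallest diffeology is via the zero section $X\to V^*$, $x\mapsto 0_x$, forcing $r(0)=0_0=0$. Any attempt to use instead a plot confined to a single fibre on an open neighbourhood of $0$ fails, because such a plot has constant base projection (to some $x_0$), contradicting $\pi^*\circ r=\mathrm{id}$; and the sheaf-theoretic gluing of a zero-section piece with a fibre-over-$0$ piece at the single common point $0\in V^*_0$ requires the latter to take the value $0$ there, so again $r(0)=0$. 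This contradicts the requirement $g(0)\neq 0$ from Step 1, proving that no pseudo-metric exists.
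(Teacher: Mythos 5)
Your Step 1 reproduces the rank profile that the paper also uses (the form must vanish on every fibre over $x\neq 0$ and be non-zero on the fibre over $0$), and your computation of the duals over $x\neq 0$ is fine. The genuine gap is in Step 2, at exactly the point you call the crux: the claim that no plot section $r$ of $V^*$ defined near $0$ can have $r(0)\neq 0$. You justify it by asserting that a plot of $V^*$ near $0$ must locally be either the zero section or confined to a single fibre; but that is not how the dual pseudo-bundle diffeology is described in the paper. Section 4 characterizes its plots by an evaluation condition: $r:U\to V^*$ is a plot if and only if, for every plot $q=(q_1,q_2):U'\to V$, the map $(u,u')\mapsto r(u)(q(u'))$ is smooth on the fibred subset of $U\times U'$. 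Under that characterization your claim is precisely the non-trivial point, and the structural description you assume begs it. Worse, it is not even clear the ``spike'' section is excluded by the evaluation test: every plot of the pseudo-bundle diffeology on $V$ generated by $p(u,v)=(u,u|v|)$ whose base component $q_1$ is not locally constant has, locally, $q_2=q_1\cdot\rho$ with $\rho$ in the ring generated by smooth functions and absolute values of smooth functions, hence $q_2$ vanishes wherever $q_1$ does; consequently the evaluation of the spike section against any plot of $V$ is locally either identically zero or a smooth multiple of $q_2$, i.e.\ smooth. So ruling out the spike requires a genuine argument tied to the precise definition of the diffeologies of $V^*$ and $V^*\otimes V^*$ used in \cite{pseudometric-pseudobundle}; it cannot be read off from ``the smallest diffeology'' phrase the way you do. This is also where your route diverges from the paper's: the paper (following the cited reference) does not try to constrain plots of $V^*$ directly, but derives the contradiction from evaluations, observing that the evaluation of any prospective pseudo-metric on a constant non-zero section is proportional to the $\delta$-function with non-zero coefficient, so the required smoothness of evaluations fails.

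A second, smaller gap is in Step 1 over the origin: that $\pi^{-1}(0)$ carries the standard diffeology (so that its dual is $\matR$ and the rank at $0$ must be $1$) does not follow from the remark that $p$ restricts to the constant $(0,0)$ on $u=0$. The subset diffeology on $\pi^{-1}(0)$ is determined by all plots of the generated pseudo-bundle diffeology with range in that fibre, not by the restriction of the single generating plot; one needs the explicit description of that generated diffeology (constants, the zero section, smooth-coefficient fibrewise combinations of $p$ composed with smooth maps) to see that nothing non-smooth lands in the fibre over $0$. This is the fact that makes the lemma non-trivial at all --- if that fibre also had trivial dual, the zero section would already be a pseudo-metric --- so it deserves a proof rather than an assertion.
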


The basic reason for this is that the evaluation of any prospective
pseudo-metric on a constant non-zero section is proportional to the
$\delta$-function, with non-zero coefficient.

\subsubsection{Pseudo-bundles obtained by gluing and pseudo-metrics}

We have already indicated the pseudo-bundles obtained by gluing of
some (a finite number of) standard (diffeologically trivial) bundles
as our main object of interest.\footnote{Admittedly, I would not
know what to do with a more general case.} Since our final goal is
to consider the corresponding pseudo-bundles of Clifford algebras
(and then those of Clifford modules), we need to clarify how
pseudo-metrics behave with respect to the gluing; indeed, this
behavior is not uniquely defined.

\paragraph{The compatibility condition} This condition is in fact
immediately obvious (out of geometric considerations) as a necessary
one for the existence of some kind of induced pseudo-metric. Let
$\pi_1:V_1\to X_1$ and $\pi_2:V_2\to X_2$ be two pseudo-bundles,
with a gluing between them, given by a smooth invertible
$f:X_1\supset Y\to f(Y)\subset X_2$ and its smooth fibrewise linear
lift $\tilde{f}:\pi_1^{-1}(Y)\to\pi_2^{-1}(f(Y))$. Suppose that each
pseudo-bundle $V_i$ carries a pseudo-metric $g_i$.

\begin{defn}
We say that $g_1$ and $g_2$ are \textbf{compatible} if for every
$y\in Y$ and for all $v_1,v_2\in\pi_1^{-1}(y)$ we have
$$g_1(y)(v_1,v_2)=g_2(f(y))(\tilde{f}(v_1),\tilde{f}(v_2)).$$
\end{defn}

\paragraph{Inductions into glued spaces} Note (see
\cite{pseudometric-pseudobundle}) that the following obvious
inclusions
$$i_1:X_1\setminus Y\hookrightarrow X_1\cup_f X_2,\,\,\,
i_2:X_2\hookrightarrow X_1\cup_f X_2,$$
$$j_1:V_1\setminus
\pi_1^{-1}(Y)\hookrightarrow V_1\cup_{\tilde{f}}V_2,\,\,\,
j_2:V_2\hookrightarrow V_1\cup_{\tilde{f}}V_2$$ are all
inductions.\footnote{Recall that a map $h:X\to Y$ between two
diffeological spaces is called an \emph{induction} if it is
injective, and the subset diffeology on $h(X)\subseteq Y$ coincides
with the pushforward of the diffeology of $X$ by the map $h$.}
Furthermore, the images of $i_1$ and $i_2$ are disjoint and cover
$X_1\cup_f X_2$ (the same holds for $j_1$, $j_2$, and
$V_1\cup_{\tilde{f}}V_2$). We will frequently make use of this fact
and the notation (and the respective counterparts of these in other
cases) when defining various maps involving glued
pseudo-bundles/spaces.

\paragraph{The induced pseudo-metric} The following
statement, proven in \cite{pseudometric-pseudobundle}, shows that a
pseudo-bundle obtained by gluing carries a natural pseudo-metric, as
long as the factors of gluing are endowed with compatible ones.

\begin{thm}
Let $\pi_1:V_1\to X_1$ and $\pi_2:V_2\to X_2$ be two
finite-dimensional diffeological vector pseudo-bundles, let
$(\tilde{f},f)$ be a gluing between them, and let $g_1$ and $g_2$ be
pseudo-metrics on $V_1$ and, respectively, $V_2$ compatible with
respect to the gluing. Define $\tilde{g}:X_1\cup_f
X_2\to(V_1\cup_{\tilde{f}}V_2)^*\otimes(V_1\cup_{\tilde{f}}V_2)^*$
as
$$\tilde{g}(x)=\left\{
\begin{array}{ll}
((j_1^{-1})^*\otimes(j_1^{-1})^*)\circ g_1(i_1^{-1}(x)) & \mbox{for }x\in i_1(X_1\setminus Y) \\
((j_2^{-1})^*\otimes(j_2^{-1})^*)\circ g_2(i_2^{-1}(x)) & \mbox{for
}x\in i_2(X_2).
\end{array}
\right.$$ Then $\tilde{g}$ is a pseudo-metric on the pseudo-bundle
$\pi_1\cup_{(\tilde{f},f)}\pi_2:V_1\cup_{\tilde{f}}V_2\to X_1\cup_f
X_2$.
\end{thm}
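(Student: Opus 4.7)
The plan is to verify, in sequence, three separate claims about $\tilde{g}$: (i) that it is unambiguously defined by the two-clause formula; (ii) that each value $\tilde{g}(x)$ is a pseudo-metric on the fibre of $\pi_1\cup_{(\tilde{f},f)}\pi_2$ over $x$; and (iii) that $\tilde{g}$ is smooth as a section of the dual tensor pseudo-bundle. Well-definedness is essentially tautological: as already recalled, the images $i_1(X_1\setminus Y)$ and $i_2(X_2)$ are disjoint and together cover $X_1\cup_f X_2$, so exactly one clause of the definition applies at each $x$.

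For the fibrewise requirement, I would use that the restrictions $j_1|_{\pi_1^{-1}(x_1)}$, for $x_1\in X_1\setminus Y$, and $j_2|_{\pi_2^{-1}(x_2)}$, for $x_2\in X_2$, are linear isomorphisms onto the corresponding fibres of $V_1\cup_{\tilde{f}}V_2$. Pulling back a pseudo-metric by a linear isomorphism of finite-dimensional diffeological vector spaces is again a symmetric, semi-definite positive bilinear form, and its rank equals the dimension of the diffeological dual of the target — which, being a diffeomorphism invariant, coincides with that of the source. Hence $\tilde{g}(x)$ has the rank required by the definition of a pseudo-metric.

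The substantive part is smoothness. By the characterization of the gluing diffeology, every plot $p:U\to X_1\cup_f X_2$ is locally (on a connected subdomain $U'\subseteq U$) of the form $\pi\circ p_i$, where $\pi:X_1\sqcup X_2\to X_1\cup_f X_2$ is the quotient projection and $p_i:U'\to X_i$ is a plot of one of the two bases. If $i=2$, then the range of $p|_{U'}$ lies entirely in $i_2(X_2)$, and smoothness of $\tilde{g}\circ p|_{U'}$ follows directly from that of $g_2$ and $(j_2^{-1})^*$. If $i=1$, the range of $p_1$ may intersect $Y$, in which case those points are identified by $\sim$ with their $f$-images in $X_2$ and therefore fall under the second clause of the formula. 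The compatibility hypothesis $g_1(y)(v_1,v_2)=g_2(f(y))(\tilde{f}(v_1),\tilde{f}(v_2))$ — together with the fact that $j_1=j_2\circ\tilde{f}$ on the overlap, which is built into the construction of the glued pseudo-bundle — is precisely what ensures that the two clauses agree along $p_1^{-1}(Y)$. Consequently $\tilde{g}\circ p|_{U'}$ is described by a single consistent smooth recipe on the whole of $U'$, and then by the characterization of the dual bundle diffeology and of the tensor product diffeology from the preceding subsection, it qualifies as a plot of $(V_1\cup_{\tilde{f}}V_2)^*\otimes(V_1\cup_{\tilde{f}}V_2)^*$.

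The main obstacle is exactly the seam analysis in the last paragraph: one must show that the local lift of a plot of the glued base through $\pi$ is compatible with the two-clause definition of $\tilde{g}$ across the gluing locus. Without the compatibility condition the two clauses would disagree on the overlap and no such section could be smooth, so this is where the hypothesis of the theorem enters in an essential way; everything else is formal bookkeeping with pushforward and subset diffeologies.
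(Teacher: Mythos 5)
The paper does not actually prove this theorem: it is quoted as ``proven in \cite{pseudometric-pseudobundle}'', so there is no in-paper argument to compare yours against. Judged on its own, your outline is correct and follows the natural route that the cited construction suggests: well-definedness from the disjointness of $i_1(X_1\setminus Y)$ and $i_2(X_2)$, the fibrewise pseudo-metric property from the fact that $j_1$ and $j_2$ restrict to linear diffeomorphisms of each fibre of $V_1$ (off $Y$) or $V_2$ onto the corresponding fibre of $V_1\cup_{\tilde{f}}V_2$ (so symmetry, positive semi-definiteness, and the rank condition transfer, the dimension of the diffeological dual being a diffeomorphism invariant), and smoothness from local lifts of base plots together with the compatibility condition, which is exactly what makes the two clauses agree over the gluing locus. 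Two points deserve tightening. First, the smoothness verification should not stop at saying the piecewise formula is ``a single consistent smooth recipe'': since the diffeology of $(V_1\cup_{\tilde{f}}V_2)^*\otimes(V_1\cup_{\tilde{f}}V_2)^*$ is characterized by evaluations, you should take an arbitrary plot $q$ of $V_1\cup_{\tilde{f}}V_2$ (or a pair of such), lift it locally to a plot of $V_1$ or of $V_2$, and check that the evaluation $(u,u')\mapsto\tilde{g}(p(u))(q(u'),\ldots)$ reduces, via the compatibility identity $g_1(y)(v,w)=g_2(f(y))(\tilde{f}(v),\tilde{f}(w))$, to an evaluation of $g_1$ or of $g_2$ on plots of $V_1$ or $V_2$, hence is smooth because $g_1$, $g_2$ are pseudo-metrics; the delicate configuration is a base plot lifting to $X_1$ with range meeting $Y$ while the total-space plot lifts to $V_2$, and it is handled by the same identity. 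Second, the phrase ``$j_1=j_2\circ\tilde{f}$ on the overlap'' is not literally meaningful, since $j_1$ is only defined on $V_1\setminus\pi_1^{-1}(Y)$; the correct statement, which is what you need, is that the quotient projection $V_1\sqcup V_2\to V_1\cup_{\tilde{f}}V_2$ restricted to $\pi_1^{-1}(Y)$ coincides with $j_2\circ\tilde{f}$. With these adjustments your argument is complete.
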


\section{Diffeological vector pseudo-bundles of Clifford algebras}

In this section we consider a finite-dimensional diffeological
vector pseudo-bundle $\pi:V\to X$ carrying a pseudo-metric $g$. As
we have shown above, this cannot be an arbitrary pseudo-bundle,
since there are some that do not admit any pseudo-metric.
Furthermore, as things stand now, it makes sense for us to limit
this discussion to those pseudo-bundles that are obtained by gluing
together some diffeologically trivial pseudo-bundles, usually ones
of differing dimensions and along maps that may or may not be
diffeomorphisms.

\subsection{Defining the pseudo-bundle of Clifford algebras $\cl(V,g)$}

Let $\pi:V\to X$ be a finite-dimensional diffeological pseudo-bundle
endowed with a pseudo-metric $g:X\to V^*\otimes V^*$. Then there is
a rather evident construction of the corresponding pseudo-bundle of
Clifford algebras. We go over it as briefly as possible, since all
the operations involved (direct sum, tensor product, and taking
quotients), and their relevant properties,\footnote{This mainly
means the commutativity between considering the subset diffeology on
each fibre and performing a given operation on the individual fibre
or, respectively, the entire pseudo-bundle.} have already been
described.

\begin{defn}
The \textbf{pseudo-bundle of Clifford algebras
$\pi^{\cl}:\cl(V,g)\to X$} is the space
$$\cl(V,g):=\bigcup_{x\in X}\cl(\pi^{-1}(x),g(x))$$
equipped with the obvious projection $\pi^{\cl}$ to $X$ and endowed
with the finest diffeology such that the subset diffeology on each
fibre $\cl(\pi^{-1}(x),g(x))$ coincides with the diffeology of the
Clifford algebra of the diffeological vector space $\pi^{-1}(x)$
with the pseudo-metric $g(x)$.
\end{defn}

With the definition stated in such terms, we should of course
explain why such diffeology exists. The proof of this coincides with
the explicit construction of the diffeology in question, which is as
follows.

Consider first the \textbf{pseudo-bundle of tensor algebras}
$\pi^{T(V)}:T(V)\to X$, where
$$T(V):=\bigcup_{x\in X}T(\pi^{-1}(x)),$$
with each $T(\pi^{-1}(x)):=\bigoplus_{r}(\pi^{-1}(x))^{\otimes r}$
being the usual tensor algebra of the diffeological vector space
$\pi^{-1}(x)$. The space $T(V)$ is endowed with the vector space
direct sum diffeology relative to the tensor product diffeology on
each factor. By the properties of these diffeologies, the subset
diffeology on each fibre of $T(V)$ is that of the tensor algebra of
the diffeological vector space $\pi^{-1}(x)$.

Next, for each fibre $(\pi^{T(V)})^{-1}(x)=T(\pi^{-1}(x))$ of $T(V)$
let $W_x$ be the kernel of the universal map $T(\pi^{-1}(x))\to
\cl(\pi^{-1}(x),g(x))$. Then, as is generally the case,
$W=\bigcup_{x\in X}W_x\subset T(V)$, with the subset diffeology
relative this inclusion, is a sub-bundle of $T(V)$.

Finally, consider the quotient pseudo-bundle $T(V)/W$. By definition
of the quotient pseudo-bundle, it has $\cl(\pi^{-1}(x),g(x))$ as
fibre at $x$, and so it coincides with $\cl(V,g)$ as defined above.
It then follows from general properties of quotient pseudo-bundles
that, relative to the quotient diffeology, the subset diffeology on
each fibre is the diffeology of the Clifford algebra of the
corresponding fibre $\pi^{-1}(x)$ of $V$.

\subsection{The pseudo-bundle $\cl(V_1\cup_{\tilde{f}}V_2,\tilde{g})$}

We now consider the behavior of pseudo-bundles of Clifford algebras
under the operation of diffeological gluing. We only treat the case
of two factors (which of course may serve as an inductive step for a
longer sequence of gluings, but we do not go into any detail about
this).

\subsubsection{Existence and smoothness of a gluing map for
$\cl(V_1,g_1)$ and $\cl(V_2,g_2)$}

Suppose that we are given a gluing $(\tilde{f},f)$ between the
pseudo-bundles $\pi_1:V_1\to X_1$ and $\pi_2:V_2\to X_2$ (endowed
with a pseudo-metric each). The existence of a natural gluing map
between the corresponding pseudo-bundles of Clifford algebras is
based, more than on anything else, on the universal properties of
Clifford algebras, together with the commutativity properties
considered above. More precisely, we have the following.

\begin{lemma}
Let $\pi_1:V_1\to X_1$ and $\pi_2:V_2\to X_2$ be finite-dimensional
diffeological vector pseudo-bundles, let $(\tilde{f},f)$ be a pair
of smooth maps defining a gluing between them, let $Y\subseteq X_1$
be the domain of definition of $f$, and let $g_1$ and $g_2$ be
compatible pseudo-metrics on $V_1$ and $V_2$ respectively. Then
there exists a well-defined map
$$\tilde{F}^{\cl}:(\pi_1^{\cl})^{-1}(Y)\to(\pi_2^{\cl})^{-1}(f(Y)),$$
that covers $f$ and whose restriction onto each fibre is an algebra
homomorphism.
\end{lemma}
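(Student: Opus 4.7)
The plan is to build $\tilde{F}^{\cl}$ by first producing a smooth bundle map between the tensor-algebra pseudo-bundles $T(V_1)|_Y$ and $T(V_2)|_{f(Y)}$ extending $\tilde{f}$, then showing that it descends to the Clifford quotient thanks to the compatibility of the pseudo-metrics. The fibrewise universal property of the diffeological Clifford algebra furnishes both the uniqueness and the algebra-homomorphism property on each fibre; the main effort is to verify global smoothness without having local trivializations at our disposal.

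First I would extend $\tilde{f}$ to the tensor-algebra pseudo-bundles. For each $k$, the fibrewise $k$-th tensor power $\tilde{f}^{\otimes k}$ is smooth as a map between the relevant tensor-power sub-bundles of $T(V_1)|_Y$ and $T(V_2)|_{f(Y)}$, because locally a plot of $(V_1|_Y)^{\otimes k}$ is the universal pushforward of a $k$-tuple of plots of $V_1|_Y$, and post-composing each entry with the smooth $\tilde{f}$ produces a $k$-tuple of plots of $V_2|_{f(Y)}$. Summing over $k$, and keeping the identity on the scalar component, yields a smooth bundle map $\tilde{f}^T:T(V_1)|_Y\to T(V_2)|_{f(Y)}$ that covers $f$ and is a fibrewise algebra homomorphism by construction.

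Next I would show that $\tilde{f}^T$ sends the sub-bundle $W_1|_Y$, whose fibre at $y$ is the kernel of the universal map $T(\pi_1^{-1}(y))\to\cl(\pi_1^{-1}(y),g_1(y))$, into $W_2|_{f(Y)}$. This ideal is fibrewise generated by elements of the form $v\otimes w+w\otimes v+4g_i(x)(v,w)$, and the compatibility condition $g_1(y)(v_1,v_2)=g_2(f(y))(\tilde{f}(v_1),\tilde{f}(v_2))$ makes $\tilde{f}^T$ carry generators at $y$ to analogous generators at $f(y)$. Hence $\tilde{f}^T$ descends to a well-defined map $\tilde{F}^{\cl}$ that covers $f$ and is a fibrewise algebra homomorphism; smoothness follows because $\cl(V_i,g_i)$ carries the pushforward of the tensor-algebra pseudo-bundle diffeology by the natural projection, and $\tilde{F}^{\cl}$ is obtained precisely as such a pushforward of a smooth map.

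Fibrewise, the restriction of $\tilde{F}^{\cl}$ coincides with the unique algebra homomorphism supplied by the diffeological universal property of $\cl(\pi_1^{-1}(y),g_1(y))$ applied to the smooth linear map $v\mapsto\tilde{f}(v)$ followed by the inclusion into $\cl(\pi_2^{-1}(f(y)),g_2(f(y)))$, whose Clifford relation is ensured precisely by the compatibility of the two pseudo-metrics. The principal obstacle I anticipate is making the smoothness claim rigorous: one must confirm that the subset-diffeology, tensor-product, direct-sum, and quotient constructions used to define the pseudo-bundle $\cl(V,g)$ commute sufficiently well with restriction from $X_i$ to $Y$ (respectively $f(Y)$) for the pushforward/quotient argument to apply at the level of pseudo-bundles and not merely fibrewise.
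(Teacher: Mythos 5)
Your argument is correct and follows essentially the same route as the paper: extend $\tilde{f}$ fibrewise by linearity and tensor multiplicativity to the tensor-algebra level, and use the compatibility condition $g_1(y)(v_1,v_2)=g_2(f(y))(\tilde{f}(v_1),\tilde{f}(v_2))$ to see that the generators of the Clifford ideal are carried to generators, so the map descends to a well-defined fibrewise algebra homomorphism covering $f$. The smoothness discussion you include goes beyond what this lemma asserts, but it matches the paper's subsequent proposition, which proves smoothness by exactly the lift-to-$T(V)$ and tensor-power-of-$\tilde{f}$ argument you sketch.
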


\begin{proof}
The construction of the map $\tilde{F}^{\cl}$ is the obvious one; on
each fibre $(\pi_1^{\cl})^{-1}(y)$ it is given by extending
$\tilde{f}|_{\pi_1^{-1}(y)}$ by linearity and tensor product
multiplicativity. That this gives a well-defined map
$$\cl(\pi_1^{-1}(y),g_1(y))\to\cl(\pi_2^{-1}(f(y)),g_2(f(y)))$$
follows from the compatibility of
the pseudo-metrics with respect to $g_1$ and $g_2$.
\end{proof}

Thus, in order to be able to claim the existence of a well-defined
gluing between $\cl(V_1,g_1)$ and $\cl(V_2,g_2)$, we only need to
establish the following statement.

\begin{prop}\label{gluing:clifford:alg:bundle:possible:prop}
The map $\tilde{F}^{\cl}$ is smooth.
\end{prop}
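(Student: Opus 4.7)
My plan is to reduce the smoothness of $\tilde{F}^{\cl}$ to that of an analogous map on the pseudo-bundles of tensor algebras, exploiting the fact that the diffeology of $\cl(V,g)$ was defined (equivalently, produced) as the quotient diffeology from $T(V)$ by the sub-bundle $W$.

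First I would extend $\tilde{f}$ to a map $\tilde{F}^{T}:(\pi_1^{T(V_1)})^{-1}(Y)\to(\pi_2^{T(V_2)})^{-1}(f(Y))$ between the restrictions of the pseudo-bundles of tensor algebras. On each fibre this is the unique algebra homomorphism $T(\pi_1^{-1}(y))\to T(\pi_2^{-1}(f(y)))$ extending the linear map $\tilde{f}|_{\pi_1^{-1}(y)}$, namely $v_1\otimes\ldots\otimes v_k\mapsto\tilde{f}(v_1)\otimes\ldots\otimes\tilde{f}(v_k)$, extended by linearity. The compatibility between $g_1$ and $g_2$ (which was needed in the previous lemma to make $\tilde{F}^{\cl}$ well-defined) implies that $\tilde{F}^{T}$ sends the sub-bundle $W_1|_Y$ into $W_2|_{f(Y)}$, so the composition $\pi_2^{\cl}\circ\tilde{F}^{T}$ descends through $\pi_1^{\cl}$ to precisely the map $\tilde{F}^{\cl}$.

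Second, I would verify that $\tilde{F}^{T}$ is smooth. Since $T(V_i)$ carries the vector space direct sum diffeology of the tensor powers, it is enough to check smoothness on each summand $V_1^{\otimes r}|_Y\to V_2^{\otimes r}|_{f(Y)}$. By the definition of the tensor product (pseudo-)bundle diffeology as a pushforward from fibred products, a plot of $V_1^{\otimes r}|_Y$ locally lifts to an $r$-tuple of plots $(p_1,\ldots,p_r)$ of $V_1$ with common base projection, and $\tilde{F}^{T}$ post-composes each $p_j$ with the smooth map $\tilde{f}$ before applying the universal (smooth) projection to the tensor power. Hence $\tilde{F}^{T}$ is smooth.

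Finally, I would conclude by invoking the universal property of the quotient pseudo-bundle diffeology on $\cl(V_1,g_1)=T(V_1)/W_1$. Any plot of $(\pi_1^{\cl})^{-1}(Y)$ is, locally, of the form $\pi_1^{\cl}\circ p^{T}$ for a plot $p^{T}$ of $T(V_1)$, which (because the quotient $T(V_1)\to\cl(V_1,g_1)$ is fibrewise) can be taken to take values in $(\pi_1^{T(V_1)})^{-1}(Y)$; post-composition with $\tilde{F}^{\cl}$ then equals $\pi_2^{\cl}\circ\tilde{F}^{T}\circ p^{T}$, which is smooth as a composition of smooth maps, proving the proposition. The only real point of attention is step two, verifying that extending $\tilde{f}$ to the tensor powers genuinely preserves plots; once this is framed through the pushforward description of the tensor product diffeology, it is immediate from smoothness of $\tilde{f}$ together with the smoothness of the universal tensor projections already established in the excerpt.
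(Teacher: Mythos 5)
Your proposal is correct and follows essentially the same route as the paper's own proof: reduce to the lifted algebra homomorphism $\tilde{F}^{TV}$ on the pseudo-bundles of tensor algebras, check its smoothness summand-by-summand on the tensor powers (where it is just $\tilde{f}\otimes\ldots\otimes\tilde{f}$), and then descend through the quotient (pushforward) diffeology defining the Clifford pseudo-bundles. The only differences are cosmetic: you unpack the smoothness of $\tilde{f}\otimes\ldots\otimes\tilde{f}$ via the pushforward description of the tensor product diffeology, which the paper simply cites as the smoothness of a tensor product of smooth maps.
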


\begin{proof}
Let $p:U\to(\pi_1^{\cl})^{-1}(Y)$ be a plot; by definition of the
diffeology on any Clifford algebra, it locally lifts to a plot
$\tilde{p}:U\to T(\pi_1^{-1}(Y))$ of $T(\pi_1^{-1}(Y))$. Observe
next that this lift is compatible with the action of
$\tilde{F}^{\cl}$, in the following sense. If $\tilde{F}^{TV}$ is
the lift of $\tilde{F}^{\cl}$ to an algebra homomorphism
$$\tilde{F}^{TV}:T(\pi_1^{-1}(Y))\to T(\pi_2^{-1}(f(Y))),$$
and
$$\Pi_1:T(\pi_1^{-1}(Y))\to\cl(\pi_1^{-1}(Y),g_1|_{Y}),$$
$$\Pi_2:T(\pi_2^{-1}(f(Y)))\to\cl(\pi_2^{-1}(f(Y)),g_2|_{f(Y)})$$ are the
corresponding universal maps, then we have
$$\Pi_2\circ\tilde{F}^{TV}\circ\tilde{p}=\tilde{F}^{\cl}\circ p.$$ In
particular (this is a consequence of the properties of the
pushforward diffeologies), the map $\tilde{F}^{\cl}$ is smooth if
and only if $\tilde{F}^{TV}$ is smooth.

Let us then show that $\tilde{F}^{TV}$ is smooth, that is, that
$\tilde{F}^{TV}\circ\tilde{p}$ is a plot of $T(\pi_2^{-1}(f(Y)))$
(where $\tilde{p}$ can now be any plot of $T(\pi_1^{-1}(Y))$; we do
not have to make a reference to it being a lift of a plot of the
corresponding Clifford algebras' bundle). We can assume that the
domain of definition of $\tilde{p}$ is small enough so that, one,
its range is contained in $\bigoplus_{r=0}^n(\pi_1^{-1}(Y))^{\otimes
r}$ for some $n$, and, two,
$\tilde{p}=\bigoplus_{r=0}^n\tilde{p}_i$, where
$\tilde{p}_i:U\to\underbrace{(\pi_1^{-1}(Y))\otimes\ldots\otimes(\pi_1^{-1}(Y))}_i$
is a plot of a single summand (the tensor product of $i$ copies of
$\pi_1^{-1}(Y)$).

Obviously, the composition $\tilde{F}^{TV}\circ\tilde{p}$ also
splits as the analogous sum of the compositions
$\bigoplus_i\tilde{F}^{TV}\circ\tilde{p}_i$, with range a single
summand $(\pi_2^{-1}(f(Y)))\otimes\ldots\otimes(\pi_2^{-1}(f(Y)))$,
so it suffices to show that each $\tilde{F}^{TV}\circ\tilde{p}_i$ is
a plot of this tensor product. But by definition $\tilde{F}^{TV}$
preserves the tensor product, and each individual factor it
coincides with (the appropriate restriction of) $\tilde{f}$. Thus,
we conclude that each $\tilde{F}^{TV}\circ\tilde{p}_i$
$$\tilde{F}^{TV}\circ\tilde{p}_i=
\left(\underbrace{\tilde{f}\otimes\ldots\otimes\tilde{f}}_i\right)\circ\tilde{p}_i.$$
Since the tensor product of smooth maps is smooth, this composition
is indeed a plot of
$(\pi_2^{-1}(f(Y)))\otimes\ldots\otimes(\pi_2^{-1}(f(Y)))$. This
allows us to conclude that $\tilde{F}^{TV}$ is smooth, and therefore
so is $\tilde{F}^{\cl}$.
\end{proof}

Essentially, what we need Proposition
\ref{gluing:clifford:alg:bundle:possible:prop} for, is to be able to
claim that there is a natural gluing of $\pi_1^{\cl}:\cl(V_1,g_1)\to
X_1$ to $\pi_2^{\cl}:\cl(V_2,g_2)\to X_2$, which is along the maps
$\tilde{F}^{\cl}$ and $f$. Furthermore, still by compatibility
between $g_1$ and $g_2$, one can consider the pseudo-bundle of
Clifford algebras associated to
$(V_1\cup_{\tilde{f}}V_2,\tilde{g})$, that is, the pseudo-bundle
$$(\pi_1\cup_{(\tilde{f},f)}\pi_2)^{\cl}:\cl(V_1\cup_{\tilde{f}}V_2,\tilde{g})
\to X_1\cup_f X_2.$$ We now make a comparison between these two
pseudo-bundles, \emph{i.e.}, between
$$\pi_1^{\cl}\cup_{(\tilde{F}^{\cl},f)}\pi_2^{\cl}:\cl(V_1,g_1)\cup_{\tilde{F}^{\cl}}\cl(V_2,g_2)
\to X_1\cup_f X_2$$ and
$$(\pi_1\cup_{(\tilde{f},f)}\pi_2)^{\cl}:\cl(V_1\cup_{\tilde{f}}V_2,\tilde{g})
\to X_1\cup_f X_2,$$ starting with the simpler case: when
$\tilde{f}$ is a diffeomorphism.

\subsubsection{If $\tilde{f}$ is a diffeomorphism}

Expectedly, in this case the gluing of $\cl(V_1,g_1)$ to
$\cl(V_2,g_2)$ along $(\tilde{F}^{\cl},f)$ yields a pseudo-bundle
which is naturally diffeomorphic to the pseudo-bundle
$\cl(V_1\cup_{\tilde{f}}V_2,\tilde{g})$. This is stated in the
following theorem; notice that on the level of pseudo-metrics this
is guaranteed by the fact that on each individual fibre, $\tilde{g}$
coincides with either $g_1$ or $g_2$, as appropriate.

\begin{thm}\label{gluing:clifford:alg:along:diffeo:thm}
Let $\pi_1:V_1\to X_1$ and $\pi_2:V_2\to X_2$ be two
finite-dimensional diffeological vector pseudo-bundles, let
$(\tilde{f},f)$ be a gluing between them such that $\tilde{f}$ is a
diffeomorphism, and $g_1$ and $g_2$ be pseudo-metrics on $V_1$ and
$V_2$ compatible with this gluing. Let $\pi_i^{\cl}:\cl(V_i,g_i)\to
X_i$ for $i=1,2$ be the pseudo-bundle of Clifford algebras. Then the
above-defined map
$$\tilde{F}^{\cl}:(\pi_1^{\cl})^{-1}(Y)\to(\pi_2^{\cl})^{-1}(f(Y))$$
is such that the following two pseudo-bundles are diffeomorphic:
$$\pi_1^{\cl}\cup_{(\tilde{F}^{\cl},f)}\pi_2^{\cl}:
\cl(V_1,g_1)\cup_{\tilde{F}^{\cl}}\cl(V_2,g_2)\to X_1\cup_f X_2,$$
and
$$(\pi_1\cup_{(\tilde{f},f)}\pi_2)^{\cl}:
\cl(V_1\cup_{\tilde{f}}V_2,\tilde{g})\to X_1\cup_f X_2,$$ where
$\tilde{g}$ is the pseudo-metric on the pseudo-bundle
$$\pi_1\cup_{(\tilde{f},f)}\pi_2:V_1\cup_{\tilde{f}}V_2\to X_1\cup_f
X_2$$ induced by the pseudo-metrics $g_1$ and $g_2$.
\end{thm}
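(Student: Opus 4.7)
The plan is to construct the claimed diffeomorphism first on individual fibres and then verify it globally by lifting the entire construction to the tensor algebra pseudo-bundles; most of the work consists in pushing the commutativity results of Section 4 through the Clifford quotient. To define the map, fix $x\in X_1\cup_f X_2$. If $x=i_1(y)$ with $y\in X_1\setminus Y$, the induction $j_1$ identifies the fibre of $V_1\cup_{\tilde{f}}V_2$ at $x$ with $\pi_1^{-1}(y)$, and the definition of $\tilde{g}$ yields $g_1(y)=\tilde{g}(x)\circ(j_1\otimes j_1)$; the corresponding Clifford algebras are therefore literally the same, modulo the $j_1$-transport. The analogous statement holds for $x=i_2(z)\in i_2(X_2)$, and on the glued part the compatibility of $g_1$ and $g_2$ together with the assumption that $\tilde{f}$ is a diffeomorphism ensure that the two identifications agree. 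This yields a fibrewise bijection
$$\Phi\colon\cl(V_1,g_1)\cup_{\tilde{F}^{\cl}}\cl(V_2,g_2)\longrightarrow\cl(V_1\cup_{\tilde{f}}V_2,\tilde{g})$$
covering the identity of $X_1\cup_f X_2$ and fibrewise an algebra isomorphism.

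To verify smoothness of $\Phi$ and its inverse, I would lift the whole construction to the tensor algebra level. The pseudo-bundle $T(V_1\cup_{\tilde{f}}V_2)$ is an iterated direct sum of tensor powers of $V_1\cup_{\tilde{f}}V_2$, so by iterating the commutativity of gluing with tensor product and with direct sum recalled in Section 4.3 one obtains a canonical diffeomorphism
$$\Psi^{TV}\colon T(V_1)\cup_{\tilde{F}^{TV}}T(V_2)\longrightarrow T(V_1\cup_{\tilde{f}}V_2),$$
where $\tilde{F}^{TV}$ is the tensor-algebra lift of $\tilde{f}$ already employed in the proof of Proposition \ref{gluing:clifford:alg:bundle:possible:prop}. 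Here the hypothesis that $\tilde{f}$ is a diffeomorphism (rather than merely a smooth fibrewise linear map) is essential, since the commutativity results for the dual and hence for the tensor product in general demand it.

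To descend to the Clifford level, let $W\subset T(V_1\cup_{\tilde{f}}V_2)$ be the sub-bundle whose fibre at $x$ is the kernel of the universal map into $\cl((V_1\cup_{\tilde{f}}V_2)_x,\tilde{g}(x))$, and analogously let $W_i\subset T(V_i)$. Compatibility of $g_1$ and $g_2$ ensures that the generators $v\otimes w+w\otimes v+4g_1(v,w)$ of the fibres of $W_1$ over $Y$ are carried by $\tilde{F}^{TV}$ to the corresponding generators of $W_2$ over $f(Y)$, so $\Psi^{TV}$ restricts to a diffeomorphism $W_1\cup W_2\to W$ of sub-bundles. The bijection induced on the quotient pseudo-bundles is precisely $\Phi$, and since quotient pseudo-bundles carry the quotient diffeology, $\Phi$ is a diffeomorphism. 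The main obstacle I foresee lies in this last step: one must verify that the two quotient operations in play — the Clifford quotient by $W_i$ (respectively $W$) and the gluing quotient corresponding to $\tilde{F}^{TV}$ — can be carried out in either order without altering the resulting diffeology. Both are pushforwards by surjections, so the content is to show that the composite pushforward is independent of the order; the cleanest way I see is an argument patterned on Lemma \ref{when:smooth:deco:descends:lem}, exploiting once more the diffeomorphism hypothesis on $\tilde{f}$ to guarantee that the kernel of the gluing projection splits cleanly against the Clifford ideal sub-bundle.
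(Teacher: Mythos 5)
Your proposal follows essentially the same route as the paper's proof: obtain a diffeomorphism at the level of the tensor-algebra pseudo-bundles (your $\Psi^{TV}$, the paper's $\Phi^{TV}$) by iterating the commutativity of gluing with direct sum and tensor product, then descend to the Clifford level, using the compatibility of $g_1$ and $g_2$ to see that the ideal sub-bundles are carried one to the other, and concluding smoothness in both directions from the fact that the Clifford diffeologies are pushforward (quotient) diffeologies. Two points, however, need correcting. First, the diffeomorphism hypothesis on $\tilde{f}$ is not what makes the tensor-level step work: in the paper the commutativity of gluing with the tensor product (unlike the one with duals) holds for an arbitrary gluing, and in the subsequent subsection the paper proves that $\Phi^{TV}$, and then $\Phi^{\cl}$, is a diffeomorphism with no assumption on $\tilde{f}$ at all; in the proof of the present theorem the hypothesis is invoked only to take the bijectivity of $\Phi^{TV}$ for granted. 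So your justification via the dual is misplaced, even though it does no harm here since the hypothesis is assumed anyway.

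Second, the ``main obstacle'' you flag at the end is real but much easier than your proposed fix suggests, and Lemma~\ref{when:smooth:deco:descends:lem} is not the right tool: the gluing projection is a quotient of a disjoint union by an identification relation, not a linear surjection with a kernel to which that lemma's smooth-direct-summand hypothesis could apply. What one actually needs is that $\cl(V_1,g_1)\cup_{\tilde{F}^{\cl}}\cl(V_2,g_2)$ carries the pushforward of the diffeology of $T(V_1)\cup_{\tilde{F}^{TV}}T(V_2)$ by the glued projection $\pi_1^{TV}\cup_{(\tilde{F}^{TV},\tilde{F}^{\cl})}\pi_2^{TV}$, and this is automatic because pushforwards compose: the gluing diffeology on the Clifford side is the pushforward of the disjoint-union diffeology on $\cl(V_1,g_1)\sqcup\cl(V_2,g_2)$, each $\cl(V_i,g_i)$ carries the pushforward from $T(V_i)$, and the square formed by the universal projections $\Pi_1,\Pi_2$ and the maps $\tilde{F}^{TV},\tilde{F}^{\cl}$ commutes (this is exactly the relation $\Pi_2\circ\tilde{F}^{TV}=\tilde{F}^{\cl}\circ\Pi_1$ already used in the proof of Proposition~\ref{gluing:clifford:alg:bundle:possible:prop}). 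With that observation in place, your scheme --- well-definedness because $\Phi^{TV}$ sends kernel to kernel (compatibility of the pseudo-metrics), bijectivity from that of $\Phi^{TV}$, smoothness of the induced map and of its inverse automatic from the pushforward descriptions of both diffeologies --- is precisely the paper's argument.
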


We denote the diffeomorphism whose existence is claim in the
theorem, by
$$\Phi^{\cl}:\cl(V_1,g_1)\cup_{\tilde{F}^{\cl}}\cl(V_2,g_2)\to
\cl(V_1\cup_{\tilde{f}}V_2,\tilde{g}).$$

\begin{proof}
Since by construction $\tilde{F}^{\cl}$ is an algebra homomorphism
on each fibre, it is in particular linear; also by construction, it
is a lift of $f$, so it does make sense to speak of gluing of the
pseudo-bundles $\pi_1^{\cl}:\cl(V_1,g_1)\to X_1$ and
$\pi_2^{\cl}:\cl(V_2,g_2)\to X_2$ along $\tilde{F}^{\cl}$ and $f$.
We thus need to exhibit a diffeomorphism $\Phi^{\cl}$ between
$\cl(V_1,g_1)\cup_{\tilde{F}^{\cl}}\cl(V_2,g_2)$ and
$\cl(V_1\cup_{\tilde{f}}V_2,\tilde{g})$ covering the identity
diffeomorphism on $X_1\cup_f X_2$.

The existence of a diffeomorphism
$$\Phi^{\cl}:\cl(V_1,g_1)\cup_{\tilde{F}^{\cl}}\cl(V_2,g_2)\to
\cl(V_1\cup_{\tilde{f}}V_2,\tilde{g})$$ covering the identity map
$X_1\cup_f X_2\to X_1\cup_f X_2$ can now be deduced from the
construction of $\tilde{g}$, that of the map $\tilde{F}^{\cl}$, as
well as the commutativity between the gluing and the operations of
the direct sum and tensor product (see Section 4.2.2). Specifically,
applying the considerations already made to establish the smoothness
of $\tilde{F}^{\cl}$ and the just-mentioned commutativity, one
immediately obtains a diffeomorphism
$$\Phi^{TV}:T(V_1)\cup_{\tilde{F}^{TV}}T(V_2)\to T(V_1\cup_{\tilde{f}}V_2)$$
that covers the identity map on $X_1\cup_f X_2$.

It then suffices to establish that the pushforward of $\Phi^{TV}$ by
the natural projections
$$\pi_1^{TV}\cup_{(\tilde{F}^{TV},\tilde{F}^{\cl})}\pi_2^{TV}:
T(V_1)\cup_{\tilde{F}^{TV}}T(V_2)\to
\cl(V_1,g_1)\cup_{\tilde{F}^{\cl}}\cl(V_2,g_2)$$ and
$$\pi^{TV}:T(V_1\cup_{\tilde{f}}V_2)\to
\cl(V_1\cup_{\tilde{f}}V_2,\tilde{g})$$ is well-defined and
bijective; the diffeologies on the ranges being pushforward
diffeologies --- by these same projections ---, the smoothness is
then automatic. That it is well-defined follows from the fact that
$\Phi^{TV}$ sends the kernel of
$\pi_1^{TV}\cup_{(\tilde{F}^{TV},\tilde{F}^{\cl})}\pi_2^{TV}$ to the
kernel of $\pi^{TV}$. That it is bijective is the result of the fact
that the map $\Phi^{TV}$ is so (and this is because $\tilde{f}$ is a
diffeomorphism to start with), and we obtain the claim
\end{proof}

Finally, observe that from this theorem, it is quite easy to
extrapolate its extension to the case of more consecutive gluings,
as long as they are all done along diffeomorphisms.

\subsubsection{Non-diffeomorphism $\tilde{f}$}

We now briefly consider what happens if the assumption of
$\tilde{f}$ being a diffeomorphism is dropped. Of course, if
$\tilde{f}$ is not a diffeomorphism then neither is
$\tilde{F}^{\cl}$. However, what we are really interested in is the
relation between $\cl(V_1,g_1)\cup_{\tilde{F}^{\cl}}\cl(V_2,g_2)$
and $\cl(V_1\cup_{\tilde{f}}V_2,\tilde{g})$, where, as we are about
to explain, the main point is the commutativity of the gluing
operation with those of the direct sum and the tensor product of
pseudo-bundles, rather than the specific properties of the gluing
map.

\paragraph{The map $\Phi^{TV}$} As we have already observed, the
properties of the map $\tilde{F}^{\cl}$ are largely defined by those
of its lift $\tilde{F}^{TV}:T(V_1)\to T(V_2)$, and the previous
construction of the diffeomorphism
$$\Phi^{\cl}:\cl(V_1,g_1)\cup_{\tilde{F}^{\cl}}\cl(V_2,g_2)\to
\cl(V_1\cup_{\tilde{f}}V_2,\tilde{g})$$ was by pushing forward the
then-diffeomorphism $\Phi^{TV}:T(V_1)\cup_{\tilde{F}^{TV}}T(V_2)\to
T(V_1\cup_{\tilde{f}}V_2)$.

\begin{lemma}
The map $\Phi^{TV}:T(V_1)\cup_{\tilde{F}^{TV}}T(V_2)\to
T(V_1\cup_{\tilde{f}}V_2)$ is a diffeomorphism.
\end{lemma}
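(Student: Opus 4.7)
The plan is to reduce the lemma to the two commutativity results of Section 4.2.2, observing that neither of them requires $\tilde{f}$ to be invertible. Decompose $T(V_i)=\bigoplus_{r\geq 0}V_i^{\otimes r}$ as a direct sum of pseudo-bundles over $X_i$, each summand carrying the tensor product diffeology. Because $\tilde{F}^{TV}$ is defined as the algebra homomorphism extending $\tilde{f}$ fibrewise, its restriction to the gluing domain splits degreewise as $\bigoplus_r\tilde{f}^{\otimes r}$, where each $\tilde{f}^{\otimes r}$ is smooth and fibrewise linear (as an iterated tensor product of the smooth fibrewise-linear $\tilde{f}$); in particular the gluing $T(V_1)\cup_{\tilde{F}^{TV}}T(V_2)$ makes sense and compatibility of the pseudo-metrics plays no role at this level.

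Next, I would apply the commutativity of gluing with the fibrewise direct sum to produce a canonical fibrewise-linear diffeomorphism
\[
T(V_1)\cup_{\tilde{F}^{TV}}T(V_2)\;\xrightarrow{\sim}\;\bigoplus_{r\geq 0}\left(V_1^{\otimes r}\cup_{\tilde{f}^{\otimes r}}V_2^{\otimes r}\right),
\]
covering the identity on $X_1\cup_f X_2$. Applying, on each summand, the commutativity of gluing with the tensor product yields canonical fibrewise-linear diffeomorphisms
\[
V_1^{\otimes r}\cup_{\tilde{f}^{\otimes r}}V_2^{\otimes r}\;\xrightarrow{\sim}\;(V_1\cup_{\tilde{f}}V_2)^{\otimes r}.
\]
Reassembling these summand by summand gives the desired $\Phi^{TV}:T(V_1)\cup_{\tilde{F}^{TV}}T(V_2)\to T(V_1\cup_{\tilde{f}}V_2)$, and reading the same chain in reverse supplies a smooth inverse.

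The main obstacle is checking that the two commutativity statements really apply under the weaker hypotheses in force here. The proofs referenced in \cite{pseudobundles} and \cite{pseudometric-pseudobundle} identify the two glued pseudo-bundles via canonical fibrewise-linear bijections and check smoothness by lifting plots across the universal maps for the direct-sum and tensor-product diffeologies; none of these steps uses an inverse of $\tilde{f}$. This is exactly why the negative answer for the dual (which genuinely needs $\tilde{f}^{-1}$) does not infect the present situation. A secondary technical point, which I would address but do not expect to cause real difficulty, is the extension of the direct-sum commutativity from two summands to the countably many summands of $T(V_i)$: this follows from the characterisation of the direct-sum diffeology as the coarsest one making each projection smooth, which reduces the question to the pairwise statement applied separately to each pair $V_i^{\otimes r}$.
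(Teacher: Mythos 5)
Your proposal is correct and follows essentially the same route as the paper: both decompose the tensor algebras degreewise, observe that $\tilde{F}^{TV}$ restricts to $\tilde{f}^{\otimes r}$ on the $r$-th summand, and then invoke the commutativity of gluing with the direct sum (for smoothness, via the direct-sum diffeology) and with the tensor product (for the fibrewise identification), noting that none of this requires $\tilde{f}$ to be invertible. The only difference is presentational: you factor $\Phi^{TV}$ explicitly through $\bigoplus_r\bigl(V_1^{\otimes r}\cup_{\tilde{f}^{\otimes r}}V_2^{\otimes r}\bigr)$, while the paper checks bijectivity fibre by fibre and smoothness summand by summand directly.
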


The difference of this statement with respect to what was claimed,
regarding the map $\Phi^{TV}$, in the proof of Theorem
\ref{gluing:clifford:alg:along:diffeo:thm}, is the absence of any
assumption on $\tilde{f}$ (when it was a diffeomorphism, we
essentially took this statement for granted).

\begin{proof}
The claim follows from the commutativity between gluing, and the
operations of direct sum and tensor product. In particular, let us
show that $\Phi^{TV}$ is bijective. In the formulas below, we us the
notation $V_x$ for the fibre at $x$ of any pseudo-bundle $V$.

Consider the $r$-th summand in the definition of, respectively,
$$T(V_1)=\bigoplus_k V_1^{\otimes k}=\bigcup_{x_1\in X_1}\bigoplus_k(V_1^{\otimes k})_{x_1}=
\bigcup_{x_1\in X_1}\bigoplus_k((V_1)_{x_1})^{\otimes k},$$
$$T(V_2)=\bigoplus_k V_2^{\otimes k}=\bigcup_{x_2\in X_2}\bigoplus_k(V_2^{\otimes k})_{x_2}=
\bigcup_{x_2\in X_2}\bigoplus_k((V_2)_{x_2})^{\otimes k},$$ and
$$T(V_1\cup_{\tilde{f}}V_2)=\bigoplus_k
(V_1\cup_{\tilde{f}}V_2)^{\otimes k}=\bigcup_{x\in X_1\cup_f
X_2}\bigoplus_k\left((V_1\cup_{\tilde{f}}V_2)^{\otimes k}\right)_x=
\bigcup_{x\in X_1\cup_f
X_2}\bigoplus_k\left((V_1\cup_{\tilde{f}}V_2)_x\right)^{\otimes
k}.$$

Observe first of all that, by its definition, the map
$\tilde{F}^{TV}$ (in addition to being fibre-to-fibre) sends the
$r$th summand of the first direct sum to the $r$th summand of the
second direct sum,
$$\tilde{F}^{TV}\left(((V_1)_y)^{\otimes r}\right)\subseteq
((V_2)_{f(y)})^{\otimes r}\mbox{ for all }y\in Y.$$ Furthermore,
within each tensor product it acts as the tensor product of $r$
times $\tilde{f}$ with itself. The fibre over $i_2(f(y))$ of
$T(V_1)\cup_{\tilde{F}^{TV}}T(V_2)$ is therefore
$$\left(((V_1)_y)^{\otimes r}\right)\cup_{\underbrace{\tilde{f}\otimes\ldots\otimes\tilde{f}}_r}
\left(((V_2)_{f(y)})^{\otimes r}\right).$$ Since gluing commutes
with tensor product, it is quite easy to see that this is the same
as the corresponding fibre of $T(V_1\cup_{\tilde{f}}V_2)$.

Finally, the fact that $\Phi^{TV}$ is smooth follows from its
construction, and the definitions of the diffeologies involved.
Indeed, by definition of a vector space's direct sum diffeology,
$\Phi^{TV}$ is smooth if and only if its restriction on the
corresponding direct sum factors (\emph{i.e.}, on all individual
tensor products and, for the domain, their gluings) is smooth. That
such restriction is smooth for each $k$ is the essence of the
commutativity between the gluing and the tensor product stated in
Section 4.2.2.
\end{proof}

\paragraph{The map $\Phi^{\cl}$} Having established that $\Phi^{TV}$
being a diffeomorphism does \emph{not} depend on $\tilde{f}$ being,
or not being, one, we now draw the analogous conclusion for the
pseudo-bundles of Clifford algebras. This can now also be done in
the manner completely analogous to the proof of Theorem
\ref{gluing:clifford:alg:along:diffeo:thm}.

In the theorem below we assume that $\pi_1:V_1\to X_1$,
$\pi_2:V_2\to X_2$, $g_1$, and $g_2$ are as in Theorem
\ref{gluing:clifford:alg:along:diffeo:thm}, but the gluing pair
$(\tilde{f},f)$ is arbitrary (assuming that it is suitable for
gluing, of course). Also notice that the fact of the existence of
pseudo-metrics compatible with it does impose significant
restrictions on $\tilde{f}$ (which, on the other hand, may still
have a non-trivial kernel, see
\cite{exterior-algebras-pseudobundles}).

\begin{thm}
There is a diffeomorphism
$\Phi^{\cl}:\cl(V_1,g_1)\cup_{\tilde{F}^{\cl}}\cl(V_2,g_2)\to
\cl(V_1\cup_{\tilde{f}}V_2,\tilde{g})$.
\end{thm}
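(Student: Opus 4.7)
The plan is to construct $\Phi^{\cl}$ by pushing forward the diffeomorphism $\Phi^{TV}:T(V_1)\cup_{\tilde{F}^{TV}}T(V_2)\to T(V_1\cup_{\tilde{f}}V_2)$ of the preceding lemma along the two natural universal projections. Concretely, I would first display the commutative square whose vertical arrows are
$$\pi_1^{TV}\cup_{(\tilde{F}^{TV},\tilde{F}^{\cl})}\pi_2^{TV}:T(V_1)\cup_{\tilde{F}^{TV}}T(V_2)\to \cl(V_1,g_1)\cup_{\tilde{F}^{\cl}}\cl(V_2,g_2)$$
and
$$\pi^{TV}:T(V_1\cup_{\tilde{f}}V_2)\to \cl(V_1\cup_{\tilde{f}}V_2,\tilde{g}),$$
and then argue that $\Phi^{\cl}$ is uniquely determined by the requirement that this square commute. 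This reduces the theorem to three checks: well-definedness (the image of the kernel of the left vertical map lies in the kernel of the right vertical map), bijectivity, and smoothness in both directions.

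For well-definedness, I would use the fibrewise description. Over a point $x=i_1(x_1)$ with $x_1\in X_1\setminus Y$ the fibre of the left-hand kernel is the Clifford ideal generated by $v\otimes w+w\otimes v+4g_1(x_1)(v,w)$; under $\Phi^{TV}$, which acts as the identity on this portion of the domain (modulo the inductions $j_1,j_2$), this is sent to the ideal generated by $v\otimes w+w\otimes v+4\tilde{g}(x)(v,w)$, and by the definition of $\tilde{g}$ the two forms coincide. The same holds for $x=i_2(x_2)$ with $x_2\in X_2$. Over the gluing locus $x=i_2(f(y))$ with $y\in Y$, the compatibility condition $g_1(y)(v_1,v_2)=g_2(f(y))(\tilde{f}(v_1),\tilde{f}(v_2))$ is exactly what forces $\Phi^{TV}$ to intertwine the two ideals. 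Running the analogous argument for $(\Phi^{TV})^{-1}$ shows that $\Phi^{\cl}$ is bijective once we know $\Phi^{TV}$ is; but that is already guaranteed by the preceding lemma, which does not assume $\tilde{f}$ is a diffeomorphism.

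Smoothness of $\Phi^{\cl}$ and of its inverse is then essentially automatic from the universal property of the quotient (pushforward) diffeology: the composition $\Phi^{\cl}\circ(\pi_1^{TV}\cup_{(\tilde{F}^{TV},\tilde{F}^{\cl})}\pi_2^{TV})$ equals $\pi^{TV}\circ\Phi^{TV}$, which is smooth as a composition of two smooth maps; hence $\Phi^{\cl}$ is smooth by the defining property of the pushforward diffeology on the source. The symmetric argument, using the smoothness of $(\Phi^{TV})^{-1}$ coming from the lemma, gives smoothness of $(\Phi^{\cl})^{-1}$.

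The step I expect to be the main obstacle is verifying that $\Phi^{TV}$ genuinely descends to the Clifford quotients when $\tilde{f}$ may collapse fibres; the danger is that two tensor-algebra elements differing by a relation involving $g_1$ could be sent to elements whose difference no longer lies in the Clifford ideal built from $\tilde{g}$. This is where the compatibility of $g_1$ and $g_2$ is indispensable, and it is worth spelling out at the level of generators on the overlap. Once this descent is verified, the rest is a formal manipulation of pushforward diffeologies parallel to the proof of Theorem \ref{gluing:clifford:alg:along:diffeo:thm}, with the role previously played by the diffeomorphism assumption on $\tilde{f}$ now taken over by the preceding lemma.
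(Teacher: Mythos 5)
Your proposal is correct and follows essentially the same route as the paper: $\Phi^{\cl}$ is obtained by pushing forward the diffeomorphism $\Phi^{TV}$ along the two universal projections, with smoothness in both directions coming for free from the pushforward diffeologies on source and target. The only cosmetic difference is in the bijectivity step, where the paper identifies both fibres over a glued point $i_2(f(y))$ directly with $\cl(\pi_2^{-1}(f(y)),g_2)$ (using that the glued fibre is the target fibre and that $\tilde{g}$ restricts there to $g_2$) and observes that $\Phi^{\cl}$ is the identity under these identifications, whereas you deduce it from bijectivity of $\Phi^{TV}$ together with the two-sided kernel correspondence; your more explicit verification that $\Phi^{TV}$ carries the Clifford ideal onto the Clifford ideal, via the compatibility of $g_1$ and $g_2$ and the definition of $\tilde{g}$, is exactly the point the paper leaves implicit.
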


\begin{proof}
Obviously, $\Phi^{\cl}$ is obtained, as before, by pushing forward
$\Phi^{TV}$ via the natural projections. It is also smooth for all
the same reasons as in the previous case (these reasons are, it is
obtained via a pushforward of a smooth map, and the diffeologies on
its domain and its range are pushforward diffeologies as well).

That $\Phi^{\cl}$ is bijective, follows from examining the structure
of individual fibres in the two spaces. The main argument is that
over a point in $i_2(f(y))$ for $y\in Y$, the fibre in the domain of
$\Phi^{\cl}$ is naturally diffeomorphic to
$\cl(\pi_2^{-1}(f(y)),g_2)$, which follows from the definition of
gluing, whereas, since the fibre over $i_2(f(y))$ in
$V_1\cup_{\tilde{f}}V_2$ is $j_2(\pi_2^{-1}(f(y)))$, the
corresponding fibre in the range of $\Phi^{\cl}$ is equal to
$\cl(\pi_2^{-1}(f(y)),\tilde{g}|_{\pi_2^{-1}(f(y))})=\cl(\pi_2^{-1}(f(y)),g_2)$
(this is by definition of $\tilde{g}$). It remains to notice that
the action of $\Phi^{TV}$ and then that of $\Phi^{\cl}$ corresponds
each to the identity under these diffeomorphisms.
\end{proof}

\section{The pseudo-bundles of Clifford modules}

In this concluding section we consider the behavior of the
pseudo-bundles of Clifford modules under the operation of gluing.
Let $X_1$ and $X_2$ be two diffeological spaces, and let
$f:X_1\supset Y\to X_2$ be a smooth map. Consider two
finite-dimensional diffeological vector pseudo-bundles over them,
$\pi_1:V_1\to X_1$ and $\pi_2:V_2\to X_2$, and a smooth
fibrewise-linear lift $\tilde{f}:\pi_1^{-1}(Y)\to\pi_2^{-1}(f(Y))$
of $f$. Suppose that each of these pseudo-bundles carries a
pseudo-metric, $g_1$ and $g_2$ respectively, and suppose that these
pseudo-metrics are compatible for the gluing defined by $\tilde{f}$
and $f$; consider the corresponding pseudo-bundles of Clifford
algebras,
$$\pi_1^{\cl}:\cl(V_1,g_1)\to X_1\,\,\,\mbox{ and }\,\,\,
\pi_2^{\cl}:\cl(V_2,g_2)\to X_2,$$ and the pseudo-bundle obtained by
gluing
$$\cl(V_1,g_1)\cup_{\tilde{F}^{\cl}}\cl(V_2,g_2)=
\cl(V_1\cup_{\tilde{f}}V_2,\tilde{g})\to X_1\cup_f X_2.$$

Suppose furthermore that we are given two pseudo-bundles of Clifford
modules,
$$\chi_1:E_1\to X_1\,\,\,\mbox{ and }\,\,\,\chi_2:E_2\to X_2,$$
that is, for $i=1,2$ there is a smooth pseudo-bundle map
$$c_i:\cl(V_i,g_i)\to\mathcal{L}(E_i,E_i)$$ that covers the identity on
the bases. Suppose finally that there is a smooth fibrewise linear
map
$$\tilde{f}':\chi_1^{-1}(Y)\to\chi_2^{-1}(f(Y))$$ that covers
$f$. We wish to specify under which conditions
$E_1\cup_{\tilde{f}'}E_2$ is a Clifford module over
$\cl(V_1\cup_{\tilde{f}}V_2,\tilde{g})$, via an action induced by
$c_1$ and $c_2$.

\paragraph{Notation} Below we will deal with more than one pair of gluings
of pseudo-bundles at a time; to be able to distinguish between them,
we now introduce a somewhat more complicated version of the notation
for the inductions $j_1$ and $j_2$. Specifically, we denote
$$j_1^{\cl}:\cl(V_1,g_1)\setminus(\pi_1^{\cl})^{-1}(Y)\hookrightarrow
\cl(V_1,g_1)\cup_{\tilde{F}^{\cl}}\cl(V_2,g_2)$$ and
$$j_2^{\cl}:\cl(V_2,g_2)\hookrightarrow
\cl(V_1,g_1)\cup_{\tilde{F}^{\cl}}\cl(V_2,g_2)$$ the two standard
inductions for the pseudo-bundles
$\cl(V_1,g_1)\cup_{\tilde{F}^{\cl}}\cl(V_2,g_2)$. In contrast, we
denote by
$$j_1^{E_1}:E_1\setminus\chi_1^{-1}(Y)\to
E_1\cup_{\tilde{f}'}E_2$$ and
$$j_2^{E_2}:E_2\hookrightarrow E_1\cup_{\tilde{f}'}E_2$$ the two
standard inductions for the pseudo-bundle $E_1\cup_{\tilde{f}'}E_2$.

\paragraph{Compatibility of $c_1$ and $c_2$} Let $y\in Y$, and let
$v\in(\pi_1^{\cl})^{-1}(y)$. Consider the element
$$\Phi^{\cl}(j_2^{\cl}(\tilde{F}^{\cl}(v)))\in((\pi_1\cup_{(\tilde{f},f)}\pi_2)^{\cl})^{-1}(i_2(f(y)))\subset
\cl(V_1\cup_{\tilde{f}}V_2,\tilde{g});$$ the two
relevant\footnote{In the sense of the two points in the base that
come into play here.} actions are
$$c_1(v):(E_1)_y\to(E_1)_y,\mbox{ and }c_2(\tilde{F}^{\cl}(v)):(E_2)_{f(y)}\to(E_2)_{f(y)}.$$
We shall now compare the following two:
$$\tilde{f}'\circ(c_1(v)),\mbox{ and }(c_2(\tilde{F}^{\cl}(v)))\circ\tilde{f}'.$$

Our aim is to obtain a well-defined action on
$E_1\cup_{\tilde{f}'}E_2$, and a potential issue presents itself
precisely for elements of form
$\Phi^{\cl}(j_2^{\cl}(\tilde{F}^{\cl}(v)))$. Indeed, this element
acts on the fibre $(E_1\cup_{\tilde{f'}}E_2)_{i_2(f(y))}$. Its
action therefore can potentially be described by
$$\Phi^{\cl}(j_2^{\cl}(\tilde{F}^{\cl}(v)))(\tilde{f}'(e_1))=
\tilde{f}'(c_1(v)(e_1))\mbox{ for any }e_1\in(E_1)_y.$$
On the other hand, for a generic element $e_2\in(E_2)_{f(y)}$
(generic meaning that it does not have to belong to the image of
$\tilde{f}'$) we might have
$$\Phi^{\cl}(j_2^{\cl}(\tilde{F}^{\cl}(v)))(e_2)=c_2(\tilde{F}^{\cl}(v))(e_2).$$
In order to obtain a smooth induced action, we wish to ensure that
these expressions are compatible with each other. We thus obtain the
following notion.

\begin{defn}
The actions $c_1$ and $c_2$ are \textbf{compatible} if for all $y\in
Y$, for all $v\in(\pi_1^{\cl})^{-1}(y)$, and for all
$e_1\in\chi_1^{-1}(y)=(E_1)_y$ we have
$$\tilde{f}'(c_1(v)(e_1))=c_2(\tilde{F}^{\cl}(v))(\tilde{f}'(e_1)).$$
\end{defn}

\paragraph{Compatibility of actions as an instance of compatibility of maps}
Let us now see whether the notion of two compatible actions is an
instance of the usual $(f,g)$-compatibility for smooth maps between
diffeological spaces (see \cite{pseudometric-pseudobundle}, Section
4.2). Each $c_i$ can be seen as a pseudo-bundle map
$c_i:\cl(V_i,g_i)\to\mathcal{L}(E_i,E_i)$; on the relevant subset of
$\cl(V_1,g_1)$ (this subset is $\cl(\pi_1^{-1}(Y),g_1|_Y)$) we have
the map $\tilde{F}^{\cl}$. Let us consider the possibility of there
being a lift $\tilde{F}^{mod}$ of $\tilde{F}^{\cl}$ for the
corresponding subsets of $\mathcal{L}(E_1,E_1)$ and
$\mathcal{L}(E_2,E_2)$.

This prospective lift, first of all, should act between the
restricted pseudo-bundles, namely, between $\bigcup_{y\in
Y}L^{\infty}((E_1)_y,(E_1)_y)$ and $\bigcup_{y'\in
f(Y)}L^{\infty}((E_2)_{y'},(E_2)_{y'})$. It should be noticed at
this point that the possibility to define the desired lift from the
actions $c_1$ and $c_2$ is not guaranteed, since the image of $c_1$
may not include whole fibres over $Y$, meaning, more precisely, that
for a given $y\in Y$ the image of the restriction of $c_1$ to
$\cl((V_1)_y,g_1(y))$ is a subset (and a subspace, of course) of
$L^{\infty}((E_1)_y,(E_1)_y)$ (the fibre at $y$ of
$\mathcal{L}(E_1,E_1)$), but it may be a proper subspace. Now, what
we want of
$$\tilde{F}^{mod}:\mathcal{L}(E_1,E_1)\supset(\pi_1^{\mathcal{L}})^{-1}(Y)\to
(\pi_2^{\mathcal{L}})^{-1}(f(Y))\subset\mathcal{L}(E_2,E_2)$$ is
that for every $y\in Y$ and for every $v\in(\pi_1^{\cl})^{-1}(y)$
there be the equality
$$\tilde{F}^{mod}(c_1(v))=c_2(\tilde{F}^{\cl}(v));$$
and if $c_1|_{\cl((V_1)_y,g_1(y))}$ is not surjective, this equality
is not sufficient to determine $\tilde{F}^{mod}$.

\paragraph{The induced action} The above considerations however do
not prevent us from defining an induced action, meaning a
homomorphism
$$c:\cl(V_1\cup_{\tilde{f}}V_2,\tilde{g})\to
\mathcal{L}(E_1\cup_{\tilde{f}'}E_2,E_1\cup_{\tilde{f}'}E_2).$$
Recalling the above identification by the diffeomorphism
$(\Phi^{\cl})^{-1}$
$$\cl(V_1\cup_{\tilde{f}}V_2,\tilde{g})\cong\cl(V_1,g_1)\cup_{\tilde{F}}\cl(V_2,g_2),$$
the action $c$ can be described as follows:
$$c(v)(e)=\left\{\begin{array}{ll}
j_1^{E_1}\left(c_1((j_1^{\cl})^{-1}(v))((j_1^{E_1})^{-1}(e))\right)
& \mbox{if }v\in\mbox{Im}(j_1^{\cl})\Rightarrow
e\in\mbox{Im}(j_1^{E_1}), \\
j_2^{E_2}\left(c_2((j_2^{\cl})^{-1}(v))((j_2^{E_2})^{-1}(e))\right)
& \mbox{if }v\in\mbox{Im}(j_2^{\cl})\Rightarrow
e\in\mbox{Im}(j_2^{E_2}).
\end{array}\right.$$
Since the images of the inductions $j_1^{\cl},j_2^{\cl}$ are
disjoint and cover $\cl(V_1,g_1)\cup_{\tilde{F}}\cl(V_2,g_2)$, and
those of $j_1^{E_1},j_2^{E_2}$ cover $E_1\cup_{\tilde{f}'}E_2$ (and
are disjoint as well), this action is well-defined. Furthermore,
each $c(v)$ is an endomorphism of the corresponding fibre, because
both $c_1((j_1^{\cl})^{-1}(v))$ and $c_2((j_2^{\cl})^{-1}(v))$
(whichever is relevant) are so.

By construction, $c$ is an action of
$\cl(V_1,g_1)\cup_{\tilde{F}}\cl(V_2,g_2)$; to obtain that of the
diffeomorphic $\cl(V_1\cup_{\tilde{f}}V_2,\tilde{g})$, it suffices
to pre-compose $c$ with the diffeomorphism $(\Phi^{\cl})^{-1}$. In
other words, it suffices to substitute $v$ in the above formula with
$(\Phi^{\cl})^{-1}(v')$ for any $v'\in
\cl(V_1\cup_{\tilde{f}}V_2,\tilde{g})$.

\begin{thm}
The action $c$ is smooth as a map
$$\cl(V_1,g_1)\cup_{\tilde{F}}\cl(V_2,g_2)\to\mathcal{L}(E_1\cup_{\tilde{f}'}E_2,E_1\cup_{\tilde{f}'}E_2).$$
\end{thm}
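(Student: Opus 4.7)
The plan is to unfold the defining property of the pseudo-bundle functional diffeology on $\mathcal{L}(E_1\cup_{\tilde{f}'}E_2,E_1\cup_{\tilde{f}'}E_2)$ and reduce the smoothness of $c$ to the following check: for every plot $p:U\to\cl(V_1,g_1)\cup_{\tilde{F}^{\cl}}\cl(V_2,g_2)$ and every plot $q:U'\to E_1\cup_{\tilde{f}'}E_2$, the evaluation map $(u,u')\mapsto c(p(u))(q(u'))$ is a plot of $E_1\cup_{\tilde{f}'}E_2$ on its natural fibre-product domain $Y'\subseteq U\times U'$ cut out by the condition that the projections of $p(u)$ and $q(u')$ to $X_1\cup_f X_2$ coincide.

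Both pseudo-bundles appearing in the source and target of $c$ carry gluing (quotient) diffeologies, so any plot $p$ of the source decomposes, after passing to a small enough sub-domain of $U$, either as $j_1^{\cl}\circ p_1$ for a plot $p_1$ of $\cl(V_1,g_1)$ with range outside $(\pi_1^{\cl})^{-1}(Y)$, or as $j_2^{\cl}\circ p_2$ for a plot $p_2$ of $\cl(V_2,g_2)$; the analogous local decomposition holds for $q$ via the inductions $j_1^{E_1}$ and $j_2^{E_2}$. This reduces the verification to a finite case analysis. In the two diagonal cases (both plots lifting from the same side of the gluing), the base-point constraint on $Y'$ forces $p_i$ and $q_i$ to match up as compatible arguments for the fibrewise action $c_i:\cl(V_i,g_i)\to\mathcal{L}(E_i,E_i)$, so the evaluation coincides with $j_i^{E_i}\bigl(c_i(p_i(u))(q_i(u'))\bigr)$; smoothness of $c_i$ as a pseudo-bundle map, combined with the fact that each $j_i^{E_i}$ is an induction, produces a plot of $E_1\cup_{\tilde{f}'}E_2$.

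In the mixed cases --- for instance $p=j_1^{\cl}\circ p_1$ with $p_1$ taking values in $(\pi_1^{\cl})^{-1}(Y)$ while $q=j_2^{E_2}\circ q_2$ with $q_2$ taking values in $\chi_2^{-1}(f(Y))$ --- the identification imposed by the gluing, $j_1^{\cl}(p_1(u))=j_2^{\cl}(\tilde{F}^{\cl}(p_1(u)))$, lets us rewrite the evaluation as $j_2^{E_2}\bigl(c_2(\tilde{F}^{\cl}(p_1(u)))(q_2(u'))\bigr)$. The compatibility condition $\tilde{f}'(c_1(v)(e))=c_2(\tilde{F}^{\cl}(v))(\tilde{f}'(e))$ is exactly what guarantees that this rewriting is consistent with the one given by the defining formula of $c$ via $j_1^{E_1}$; we then invoke the smoothness of $c_2$ (as a pseudo-bundle map) and the already-established smoothness of $\tilde{F}^{\cl}$ to conclude that this composition is a plot of $E_1\cup_{\tilde{f}'}E_2$.

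The main obstacle is precisely this mixed case: it is the only one where the definition of $c$ depends on which side of the gluing one chooses to lift $p$ and $q$ to, and the argument only goes through because the compatibility condition in the definition preceding the theorem is \emph{exactly} the condition that makes the two possible rewritings agree. Once that is in place, each case reduces to the smoothness of $c_1$, of $c_2$, and of $\tilde{F}^{\cl}$, all of which are either assumed or already proved, and putting the cases together yields the smoothness of $c$.
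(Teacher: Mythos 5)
Your proposal follows essentially the same route as the paper's proof: reduce smoothness to the evaluation property of the pseudo-bundle functional diffeology, lift plots of the glued pseudo-bundles locally to one of the two factors, and invoke the compatibility of $c_1$ and $c_2$ (together with the smoothness of $c_1$, $c_2$, $\tilde{F}^{\cl}$, $\tilde{f}'$ and the inductions) precisely where the two defining formulas for $c$ meet over the glued locus. The only difference is bookkeeping: the paper folds your ``mixed case'' into the $(p_1,q_1)$ case as the sub-case $p_1(u)\in(\pi_1^{\cl})^{-1}(Y)$, where $j_1^{E_1}$ is not defined and the value is $j_2^{E_2}\left(\tilde{f}'(c_1(p_1(u))(q_1(u')))\right)$, but the mathematical content is identical.
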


\begin{proof}
Let $p:U\to\cl(V_1,g_1)\cup_{\tilde{F}}\cl(V_2,g_2)$ be a plot; we
need to show that $U\ni u\mapsto c(p(u))$ is a plot of
$\mathcal{L}(E_1\cup_{\tilde{f}'}E_2,E_1\cup_{\tilde{f}'}E_2)$.
Recall that the latter means that for any plot $q:U'\to
E_1\cup_{\tilde{f}'}E_2$ the evaluation map $(u,u')\mapsto
c(p(u))(q(u'))\in E_1\cup_{\tilde{f}'}E_2$ is smooth, for the subset
diffeology of the subset of $U\times U'$ on which it is defined.

Take an arbitrary $q$ and recall that locally it lifts to either a
plot $q_1$ of $E_1$ or a plot $q_2$ of $E_2$. In the first case $q$
has form
$$q(u')=\left\{\begin{array}{ll} j_1^{E_1}(q_1(u')) &
\mbox{if }q_1(u')\in\chi_1^{-1}(X_1\setminus Y),\\
j_2^{E_2}(\tilde{f}'(q_1(u'))) & \mbox{if }q_1(u')\in\chi_1^{-1}(Y).
\end{array}\right.
$$
In the second case $q$ has form $q=j_2^{E_2}\circ q_2$. For each of
these, we need to show that $c(p(u))(q(u'))$ is again a plot of the
same (respective, since $c(\cdot)$ acts fibrewise) form.

Likewise, $p$ is a plot of
$\cl(V_1,g_1)\cup_{\tilde{F}}\cl(V_2,g_2)$; therefore it also lifts
to either a plot $p_1$ of $\cl(V_1,g_1)$ or a plot $p_2$ of
$\cl(V_2,g_2)$. In the former case $p$ has form
$$p(u)=\left\{\begin{array}{ll} j_1^{\cl}(p_1(u)) & \mbox{if }p_1(u)\notin(\pi_1^{\cl})^{-1}(Y),\\
j_2^{\cl}(\tilde{F}^{\cl}(p_1(u))) & \mbox{if
}p_1(u)\in(\pi_1^{\cl})^{-1}(Y).
\end{array}\right.
$$
In the latter case we have $p=j_2^{\cl}\circ p_2$. Notice
furthermore that, if we assume the domain of the evaluation map to
be small enough, then either $p$ lifts to $p_1$ and $q$ lifts to
$q_1$ (but not to $q_2$), or $p$ lifts to $p_2$ and $q$ lifts to
$q_2$.\footnote{This is the only place where we use the fact that
$c$ acts between total spaces of pseudo-bundles.} Thus, there are no
more than two cases that determine the value of the evaluation map.

Let us consider the first case, \emph{i.e.}, where $p$ and $q$ lift
to $p_1$ and $q_1$ respectively. Putting together the definition of
the action $c$ (given prior to the statement of the proposition),
and the descriptions of $p$ and $q$ for this case, we obtain:
$$c(p(u))(q(u'))=\left\{\begin{array}{ll}
j_1^{E_1}\left(c_1(p_1(u))(q_1(u'))\right) & \mbox{if }p_1(u)\notin(\pi_1^{\cl})^{-1}(Y),\\
j_2^{E_2}\left(c_2(\tilde{F}^{\cl}(p_1(u)))(\tilde{f}'(q_1(u')))\right)
& \mbox{if }p_1(u)\in(\pi_1^{\cl})^{-1}(Y).
\end{array}\right.$$
Recall that by the compatibility of the actions $c_1$ and $c_2$ we
have
$$j_2^{E_2}\left(c_2(\tilde{F}^{\cl}(p_1(u)))(\tilde{f}'(q_1(u')))\right)=
j_2^{E_2}\left(\tilde{f}'(c_1(p_1(u))(q_1(u')))\right).$$
Thus, it remains to observe that this is precisely one of the
possible forms for a plot of $E_1\cup_{\tilde{f}'}E_2$ (which is
what we needed to obtain).

Let us now consider the second case, that of $p=j_2^{\cl}\circ p_2$.
Then it follows immediately from our definition of $c$ that
$$c(p(u))(q(u'))=j_2^{E_2}(c_2(p_2(u))(q_2(u'))),$$
and the desired conclusion follows from the smoothness of $c_2$ and
the fact that $j_2^{\cl}$ is an induction. Since all the cases have
thereby been considered, the statement is proven.
\end{proof}

\section{An example}

We now briefly illustrate the constructions of the previous two
sections, by considering the gluing presentation of a certain
pseudo-bundle $\pi:V\to X$ underlying the internal tangent bundle
$T^{dvs}(\hat{X})$ (see \cite{CWtangent}) of the space
$\hat{X}=\matR\vee\matR$. When the latter space is viewed as the
union of the coordinate axes in $\matR^2$ and endowed with the
subset diffeology coming from the standard diffeology on $\matR^2$,
the resulting internal tangent bundle has the same fibrewise
structure, but the diffeology on both the base space is stronger
than that of our $X$, and the diffeology on the total space is
stronger than the one on $V$. This follows, in particular, from an
example in \cite{watts}).

\subsection{The pseudo-bundle $\pi:V\to X$}

We first describe our pseudo-bundle $\pi:V\to X$, immediately in
terms of a gluing construction of it, and endow it with a
pseudo-metric (corresponding to this gluing). We also specify how it
is related to $T^{dvs}(\hat{X})$.

\subsubsection{The gluing presentation of $\pi:V\to X$}

The pseudo-bundle $\pi:V\to X$ is obtained by gluing together three
other pseudo-bundles. The first of these is the pseudo-bundle
$\pi_0:V_0\to X_0$, where $V_0$ is the standard $\matR^2$ and
$X_0=\{x_0\}$ is a one-point set. The second and the third
pseudo-bundles are $\pi_1:V_1\to X_1$ and $\pi_2:V_2\to X_2$, where
both $V_1$ and $V_2$ are the standard $\matR^2$, and both $X_1$ and
$X_2$ are the standard $\matR$; for convenience, we write $\pi_1$ as
the projection onto the first coordinate, $\pi_1(x,y)=x$, and
$\pi_2$ as the projection onto the second coordinate,
$\pi_2(x,y)=y$.

We then define the following two gluings, the first one of $V_1$ to
$V_0$ and the second one of $V_2$ to $V_0$. These gluings are along
the maps $(\tilde{f}_1,f_1)$ and $(\tilde{f}_2,f_2)$, where the maps
$f_1:\{0\}\to\{x_0\}$ and $f_2:\{0\}\to\{x_0\}$ are the obvious
ones, while $\tilde{f}_1$ and $\tilde{f}_2$ are the standard
embeddings of the lines $\{(0,y)\}$ and $\{(x,0)\}$ into $\matR^2$,
that is, $\tilde{f}_1:V_1\ni(0,y)\mapsto(0,y)\in V_0$ and
$\tilde{f}_2:V_2\ni(x,0)\mapsto(x,0)\in V_0$. The pseudo-bundle
$\pi:V\to X$ is obtained by first gluing $V_1$ to $V_0$ along
$(\tilde{f}_1,f_1)$ and by then gluing $V_2$ to
$V_1\cup_{\tilde{f}_1}V_0$ along
$(j_2^{V_0}\circ\tilde{f}_2,i_2\circ f_2)$.

The resulting pseudo-bundle can also be described as follows:
$$\left\{\begin{array}{l}
V=\{(x,y,z,w)\,|\,xy=0,\,x\neq 0\Rightarrow w=0,\,y\neq
0\Rightarrow z=0\}\subset\matR^4,\\
X=\{(x,y,0,0)\,|\,xy=0\}\subset\matR^4,\\
\pi(x,y,w,z)=(x,y,0,0). \end{array}\right.$$ This presentation in
accordance with the analogous presentations of $\pi_0:V_0\to X_0$,
$\pi_1:V_1\to X_1$, and $\pi_2:V_2\to X_2$, which are as follows:
$$\left\{\begin{array}{lll}
V_0=\{(0,0,z,w)\}, & X_0=\{(0,0,0,0)\}, & \pi_0=\pi|_{V_0},\\
V_1=\{(x,0,z,0)\}, & X_1=\{(x,0,0,0)\}, & \pi_1=\pi|_{V_1},\\
V_2=\{(0,y,0,w)\}, & X_2=\{(0,y,0,0)\}, & \pi_2=\pi|_{V_2}.
\end{array}\right.$$ With respect to these presentations, the
gluing maps assume the following form:
$$\left\{\begin{array}{ll}
f_1(0,0,0,0)=(0,0,0,0) & f_2(0,0,0,0)=(0,0,0,0)\\
\tilde{f}_1(0,0,z,0)=(0,0,z,0) & \tilde{f}_2(0,0,0,w)=(0,0,0,w).
\end{array}\right.$$

\subsubsection{A corresponding pseudo-metric $\tilde{g}$}

To give a common description of the four pseudo-metrics that we will
need, any one of those has the following shape. It is defined, first
of all, on an appropriate subset of the set $\{(x,y,0,0)\}$; to each
such point it assigns a symmetric bilinear form on the set
$\{(x,y,z,w)\}$ with $x,y$ fixed and $z,w$. Thus, in resemblance to
the standard case, we will define our pseudo-metrics by quadratic
forms in $dz$ and $dw$, with the following meaning:
$$\left\{\begin{array}{l}
g'(x,y,0,0)=h_{11}(x,y)dz^2+h_{12}(x,y)dzdw+h_{22}(x,y)dw^2,\\
g'(x,y,0,0)((x,y,z_1,w_1),(x,y,z_2w_2))=h_{11}(x,y)z_1z_2+h_{12}(z_1w_2+z_2w_1)+h_{22}w_1w_2,
\end{array}\right.$$
where $h_{11},h_{12},h_{22}:\matR^2\to\matR$ are some ordinary
smooth functions, on which additional conditions are imposed,
according to the specific case we are treating.

\paragraph{The pseudo-metrics $g_0$, $g_1$, and $g_2$} Using the
notation just introduced, we define the following pseudo-metrics on
$V_1$, $V_2$, and $V_0$, denoted respectively by $g_0$, $g_1$, and
$g_2$:
$$\left\{\begin{array}{l}
g_0(0,0,0,0)=dz^2+dw^2,\\
g_1(x,0,0,0)=(x^2+1)dz^2,\\
g_2(0,y,0,0)=(y^2+1)dw^2.
\end{array}\right.$$
In particular, the compatibility of $g_1$ with $g_0$ (recall that it
refers only to the fibre $\{(0,0,z,0)\}$), and that of $g_2$ with
$g_0$, are both evident from these definitions.

\paragraph{The pseudo-metric $\tilde{g}$} We can thus define the
induced metric $\tilde{g}:X\to V^*\otimes V^*$ (it can be described
as $g_2\cup(g_1\cup g_0)$, although we did not quite introduce this
notation; in any case, this is not an instance of gluing of smooth
maps):
$$\tilde{g}(x,y,0,0)=\left\{\begin{array}{ll}
(x^2+1)dz^2 & \mbox{if }y=0,x\neq 0\\
(y^2+1)dw^2 & \mbox{if }x=0,y\neq 0\\
dz^2+dw^2 & \mbox{if }x=y=0.
\end{array}\right.$$

\subsubsection{Relation to $T^{dvs}(\hat{X})$}

There is an obvious identification of our $X$ with $\hat{X}$, of
course, and that extends to individual fibres. That is, for any
$x\in X$ the fibre $\pi^{-1}(x)\subset V$ is the same as the fibre
in $T^{dvs}(\hat{X})$ over the counterpart of $x$; \emph{i.e.}, it
the standard $\matR^2$ over the crossing point of the two lines, and
it is $\matR$ elsewhere. However, as we already mentioned, the
diffeology of $X$ is strictly finer (smaller) than that of
$\hat{X}$, and the same is true of the diffeologies of $V$ and of
$T^{dvs}(\hat{X})$.

\subsection{The Clifford algebra $\cl(V,\tilde{g})$ and gluing}

All our pseudo-bundle involve fibres of dimension at most $2$, so
the corresponding pseudo-bundles of Clifford algebras have dimension
at most $4$. Thus, we will represent them as subsets in $\matR^6$,
where the first four coordinates $x,y,z,w$ correspond to the above
copy of $\matR^4$ that contains the pseudo-bundles themselves, the
$5$-th coordinate $u_0$ corresponds to the scalar parts (elements of
degree $0$) of the Clifford algebras, and the $6$-th coordinate
$u_2$ corresponds to the elements of degree $2$; it is written with
respect to the generator $\frac{\partial}{\partial
z}\otimes\frac{\partial}{\partial w}$.

\paragraph{The Clifford algebra $\cl(V_0,g_0)$} As a set, it is
given by
$$\cl(V_0,g_0)=\{(0,0,z,w,u_0,u_2)\}.$$ The Clifford multiplication,
which is in fact that of the Clifford algebra of the usual $\matR^2$
with the canonical scalar product, is explicitly described by:
\begin{flushleft}
$(0,0,z_1,w_1,u_0',u_2')\cdot_{\cl}(0,0,z_2,w_2,u_0'',u_2'')=$
\end{flushleft}
\begin{flushright}
$=(0,0,u_0'z_2+u_0''z_1+2w_1u_2''-2u_2'w_2,
u_0''w_1+u_0'w_2-2z_1u_2''+2u_2'z_2,u_0'u_0''-2z_1z_2-2w_1w_2-u_2'u_2'',u_0'u_2''+u_0''u_2'+z_1w_2-w_1z_2)$.
\end{flushright}

\paragraph{The Clifford algebras $\cl(V_1,g_1)$ and $\cl(V_2,g_2)$}
Consistently with the above, these two are given by the sets
$$\cl(V_1,g_1)=\{(x,0,z,0,u,0)\}\,\,\,\mbox{ and }\,\,\,
\cl(V_2,g_2)=\{(0,y,0,w,u,0)\};$$ the Clifford multiplication is
defined, respectively, by
$$(x,0,z_1,0,u_0',0)\cdot_{\cl}(x,0,z_2,0,u_0'',0)=
(x,0,u_0''z_1+u_0'z_2,0,u_0'u_0''-2z_1z_2(x^2+1),0),$$
$$(0,y,0,w_1,u_0',0)\cdot_{\cl}(0,y,0,w_2,u_0'',0)=
(0,y,u_0''w_1+u_0'w_2,0,u_0'u_0''-2w_1w_2(y^2+1),0).$$

\paragraph{The resulting Clifford algebra $\cl(V,\tilde{g})$} It is
a reflection of what has been proven in Section 5 regarding the
existence of the induced gluing between (pseudo-bundles of) Clifford
algebras, that we obtain a presentation of $\cl(V,\tilde{g})$ by
simply uniting the above presentations for the factors:
$$\cl(V,\tilde{g})=\{(0,0,z,w,u_0,u_2)\}\cup\{(x,0,z,0,u,0)\}\cup\{(0,y,0,w,u,0)\}$$
and
\begin{flushleft}
$(x,y,z_1,w_1,u_0',u_2')\cdot_{\cl}(x,y,z_2,w_2,u_0'',u_2'')=$
\end{flushleft}
\begin{flushright}
$=\left\{\begin{array}{ll} (0,0,u_0'z_2+u_0''z_1+2w_1u_2''-2u_2'w_2,
u_0''w_1+u_0'w_2-2z_1u_2''+2u_2'z_2, & \\
u_0'u_0''-2z_1z_2-2w_1w_2-u_2'u_2'',u_0'u_2''+u_0''u_2'+z_1w_2-w_1z_2)
& \mbox{if }x=y=0,\\
(x,0,u_0''z_1+u_0'z_2,0,u_0'u_0''-2z_1z_2(x^2+1),0) & \mbox{if
}y=0\\
(0,y,u_0''w_1+u_0'w_2,0,u_0'u_0''-2w_1w_2(y^2+1),0) & \mbox{if }x=0.
\end{array}\right.$
\end{flushright}
The fact that these formulae are consistent at all points of
intersection reflects the commutativity of the Clifford algebra
construction with gluing, established in Theorem 5.6; indeed, all
the gluing maps are restrictions, to appropriate subsets, of the
identity map $\matR^6\to\matR^6$.

\subsection{The exterior algebras $\bigwedge V$ and $\bigwedge V_i$ as Clifford modules}

The four exterior algebras are defined by the same four subsets of
$\matR^6$; the product, being the usual exterior product, has of
course different form.

\paragraph{The exterior algebra $\bigwedge V_0$} This is the set
$$\bigwedge V_0=\{(0,0,z,w,u_0,u_2)\};$$ the exterior product is
given by
\begin{flushleft}
$(0,0,z_1,w_1,u_0',u_2')\wedge(0,0,z_2,w_2,u_0'',u_2'')=$
\end{flushleft}
\begin{flushright}
$=(0,0,u_0'z_2+u_0''z_1,u_0'w_2+u_0''w_1,u_0'u_0'',u_0'u_2''+u_0''u_2'+z_1w_2-z_2w_1)$.
\end{flushright}

\paragraph{The exterior algebras $\bigwedge V_1$ and $\bigwedge
V_2$} These are sets
$$\bigwedge V_1=\{(x,0,z,0,u,0)\}\,\,\,\mbox{ and }\,\,\,
\bigwedge V_2=\{(0,y,0,w,u,0)\};$$ the respective exterior products
are given by
$$(x,0,z_1,0,u_1,0)\wedge(x,0,z_2,0,u_2,0)=(x,0,u_2z_1+u_1z_2,0,u_1u_2,0),$$
$$(0,y,0,w_1,u_1,0)\wedge(0,y,0,w_2,u_2,0)=(0,y,0,u_2w_1+u_1w_2,u_1u_2,0).$$

\paragraph{The exterior algebra $\bigwedge V$} Finally, the exterior
algebra $\bigwedge V$ is also obtained by uniting the three
presentations:
$$\bigwedge V=\{(0,0,z,w,u_0,u_2)\}\cup\{(x,0,z,0,u,0)\}\cup\{(0,y,0,w,u,0)\};$$
the exterior product is given by
\begin{flushleft}
$(x,y,z_1,w_1,u_0',u_2')\wedge(x,y,z_2,w_2,u_0'',u_2'')=$
\end{flushleft}
\begin{flushright}
$=\left\{\begin{array}{ll}
=(x,y,u_0'z_2+u_0''z_1,u_0'w_2+u_0''w_1,u_0'u_0'',u_0'u_2''+u_0''u_2'+z_1w_2-z_2w_1)
& \mbox{if }x=y=0,\\
(x,y,u_2z_1+u_1z_2,0,u_1u_2,0) & \mbox{if }y=0\\
(0,y,0,u_2w_1+u_1w_2,u_1u_2,0) & \mbox{if }x=0.
\end{array}\right.$
\end{flushright} In fact, this is an instance of the similar commutativity
of gluing with the construction of the exterior algebras'
pseudo-bundles (see \cite{exterior-algebras-pseudobundles}); once
again, all formulae are consistent with each other on all
intersection subsets.

\subsection{The gluing of Clifford actions}

Finally, we write down the standard Clifford actions of the above
Clifford algebras on the corresponding exterior algebras.

\paragraph{The action $c_0$ of $\cl(V_0,g_0)$ on $\bigwedge V_0$}
Let us now specify, relative to our presentation of the two
pseudo-bundle, the shape of the standard Clifford action of
$\cl(V_0,g_0)$ on $\bigwedge V_0$. To avoid too complicated
formulae, and following the standard way, we specify it this time on
the degree 1 generators of $\cl(V_0,g_0)$, which are
$$(0,0,1,0,0,0)\,\,\,\mbox{ and }\,\,\,(0,0,0,1,0,0).$$
We obtain
$$c_0(0,0,1,0,0,0)((0,0,z,w,u_0,u_2))=
(0,0,u_0,-u_2,-z,w),$$
$$c_0(0,0,0,1,0,0)((0,0,z,w,u_0,u_2))=
(0,0,u_2,u_0,-w,-z).$$

\paragraph{The actions $c_i$ of $\cl(V_i,g_i)$ on $\bigwedge V_i$ for $i=1,2$}
Analogously, it suffices to specify the action $c_1(x,0,1,0,0,0)$ of
the degree $1$ generator of $\cl(V_1,g_1)$ and the action
$c_2(0,y,0,1,0,0)$ of the degree $1$ generator of $\cl(V_2,g_2)$. We
obtain
$$c_1(x,0,1,0,0,0)((0,0,z,0,u_0,0))=
(x,0,u_0,0,-z(x^2+1),0).$$
$$c_2(0,y,0,1,0,0)((0,0,0,w,u_0,0))=
(0,y,0,u_0,-w(y^2+1),0).$$
The compatibility of the two actions is
reflected by the fact that these expressions coincide with those in
the preceding paragraph over the point $x=y=0$.

\paragraph{The final action $c$ of $\cl(V,g)$ on $\bigwedge V$}
The action $c$ now is obtained by simply concatenating the
expressions for $c_0$, $c_1$, and $c_2$.

\vspace{1cm}

\noindent University of Pisa \\
Department of Mathematics \\
Via F. Buonarroti 1C\\
56127 PISA -- Italy\\
\ \\
ekaterina.pervova@unipi.it\\

\end{document}